\RequirePackage[table]{xcolor}
\documentclass[a4paper,12pt]{amsart}
\usepackage{amsfonts}
\usepackage{amsmath}
\numberwithin{equation}{section}
\usepackage{ifthen}
\usepackage{amsrefs}
\usepackage{amsthm}
\usepackage[all,cmtip]{xy}
\usepackage{amssymb}
\usepackage{hyperref}

\usepackage{amsmath, bm}
\usepackage{tikz-cd}
\usepackage{pifont}
\usepackage{mathtools}

\usepackage{hyperref}
\usepackage{url}
\usepackage[shortlabels]{enumitem}
\usepackage[paper=a4paper,left=20mm,right=20mm,top=25mm,bottom=30mm]{geometry}
\setlist[enumerate]{topsep=0em, itemsep= -0em, parsep = 0 em, label=$(\alph*)$}


\pagenumbering{arabic}
\nocite{*}

\newcommand{\cF}{\mathcal{F}}

\newcommand{\cK}{\mathcal{K}}

\newcommand{\cT}{\mathcal{T}}

\newcommand{\cX}{\mathcal{X}}
\newcommand{\cY}{\mathcal{Y}}

\DeclareMathOperator{\Hom}{Hom}

\DeclareMathOperator{\Ext}{Ext}

\DeclareMathOperator{\End}{End}

\DeclareMathOperator{\Tr}{Tr}

\DeclareMathOperator{\Tor}{Tor}


\newtheorem{proposition}{Proposition}
\newtheorem{Theorem}[proposition]{Theorem}


\newtheorem{TheoremS}{Theorem}[section]
\newtheorem{CorollaryS}[TheoremS]{Corollary}
\newtheorem{LemmaS}[TheoremS]{Lemma}
\newtheorem{PropositionS}[TheoremS]{Proposition}

\newenvironment{example}[1][Example.]{\begin{trivlist}
\item[\hskip \labelsep {\bfseries #1}]}{\end{trivlist}}

\newenvironment{Remark}[1][Remark.]{\begin{trivlist}
\item[\hskip \labelsep {\bfseries #1}]}{\end{trivlist}}

\newenvironment{Definition}[1][Definition.]{\begin{trivlist}
\item[\hskip \labelsep {\bfseries #1}]}{\end{trivlist}}

\BibSpec{collection.article}{%
    +{}  {\PrintAuthors}                {author}
    +{,} { \textit}                     {title}
    +{,} { }                            {series}
    +{,} { }                            {volume}
    +{.} { }                            {part}
    +{:} { \textit}                     {subtitle}
    +{,} { \PrintContributions}         {contribution}
    +{,} { \PrintConference}            {conference}
    +{}  {\PrintBook}                   {book}
    +{,} { }                            {booktitle}
    +{,} { }                            {publisher}
    +{,} { \textit}                     {address}
    +{,} { \PrintDateB}                 {date}
    +{,} { pp.~}                        {pages}
    +{,} { }                            {status}
    +{,} { \PrintDOI}                   {doi}
    +{,} { available at \eprint}        {eprint}
    +{}  { \parenthesize}               {language}
    +{}  { \PrintTranslation}           {translation}
    +{;} { \PrintReprint}               {reprint}
    +{.} { }                            {note}
    +{.} {}                             {transition}
    +{}  {\SentenceSpace \PrintReviews} {review}
}


\begin{document}

\rmfamily


\thispagestyle{empty}

\title{Tilting quivers for BB-tilted algebras} 

\date{}
\author{Hongwei Peng}

\newcommand{\Addresses}{{
  \bigskip
  \footnotesize

 \textsc{Christian-Albrechts-Universität zu Kiel, Heinrich-Hecht-Platz 6, 24118 Kiel, Germany
}\par\nopagebreak
  \textit{E-mail address}: \texttt{peng@math.uni-kiel.de}

  }}

\maketitle

\begin{abstract}
Let $\Lambda$ be a hereditary algebra, $B_0=\End_\Lambda(T_0)$ be a tilted algebra. We will construct tilting $B_0$-modules from tilting $\Lambda$-modules and use this result to show how tilting quivers of BB-tilted algebras can be obtained from those of $\Lambda$.
\end{abstract}

Tilting theory, initiated in \cite{APR}, \cite{BB}, further developed in \cite{KB}, \cite{HR} and then generalized in \cite{M}, is an interesting and important topic in representation theory (for more information about tilting theory, we refer to \cite{HHK}). The construction of tilting modules has been a central topic with emphasis on finding complements to almost complete tilting modules (see \cite{HaUn4}, \cite{CHU}). In this article, we will give a way of constructing tilting modules from a given one. Our main result reads as follows:

\begin{Theorem}\label{The 1}
Let $B_0=\End_\Lambda(T_0)$ be a tilted algebra of type $\Lambda$. If $T=Y\oplus X$ is a tilting $\Lambda$-module with $Y\in\mathcal{T}(T_0)$, $X\in\mathcal{F}(T_0)$ such that no indecomposable summand of $Y$ is generated by $X$, then $\Hom_\Lambda(T_0,Y/Tr_{Y}X)\oplus \Ext^{1}_\Lambda(T_0,X)$ is a tilting $B_0$-module.
\end{Theorem}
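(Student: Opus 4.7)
The plan is to set $F:=\Hom_\Lambda(T_0,-)$ and $G:=\Ext^1_\Lambda(T_0,-)$, so that $T':=F(Y/\Tr_Y X)\oplus G(X)$, and to verify the tilting conditions for $T'$ over $B_0$ by translating every computation back to $\Modd\,\Lambda$ via the Brenner--Butler equivalences $F\colon\mathcal{T}(T_0)\xrightarrow{\sim}\mathcal{Y}(T_0)$ and $G\colon\mathcal{F}(T_0)\xrightarrow{\sim}\mathcal{X}(T_0)$ (where $(\mathcal{X}(T_0),\mathcal{Y}(T_0))$ denotes the associated tilting torsion pair in $\Modd\,B_0$) and the induced derived equivalence. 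Since $Y/\Tr_Y X$ is a quotient of $Y\in\mathcal{T}(T_0)$, it still lies in $\mathcal{T}(T_0)$, so both summands of $T'$ are in the BB image, and the standard isomorphisms
\[
\Ext^i_{B_0}(F(M),F(N))\cong\Ext^i_\Lambda(M,N),\qquad \Ext^i_{B_0}(G(M'),F(N))\cong\Ext^{i-1}_\Lambda(M',N),
\]
\[
\Ext^i_{B_0}(F(M),G(N'))\cong\Ext^{i+1}_\Lambda(M,N'),\qquad \Ext^i_{B_0}(G(M'),G(N'))\cong\Ext^i_\Lambda(M',N')
\]
(for $M,N\in\mathcal{T}(T_0)$ and $M',N'\in\mathcal{F}(T_0)$) are available.

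First I would establish two key vanishings in $\Modd\,\Lambda$. Writing $\Tr_Y X$ as the image of some morphism $X^m\to Y$ gives a surjection $X^m\twoheadrightarrow\Tr_Y X$; combined with $\Ext^1_\Lambda(X,X)=0$ (since $X$ is a summand of the tilting module $T$) and the hereditariness of $\Lambda$, this yields $\Ext^1_\Lambda(X,\Tr_Y X)=0$. Applying $\Hom_\Lambda(X,-)$ to the trace sequence $0\to\Tr_Y X\to Y\to Y/\Tr_Y X\to 0$, and using that $\Hom_\Lambda(X,\Tr_Y X)\to\Hom_\Lambda(X,Y)$ is an isomorphism by definition of the trace together with $\Ext^1_\Lambda(X,Y)=0$, then forces $\Hom_\Lambda(X,Y/\Tr_Y X)=0$. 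Applying $\Hom_\Lambda(-,Y/\Tr_Y X)$ to the same sequence, using $\Ext^1_\Lambda(Y,Y/\Tr_Y X)=0$ (from $\Ext^1_\Lambda(Y,Y)=0$ and hereditariness) and $\Hom_\Lambda(\Tr_Y X,Y/\Tr_Y X)=0$ (inherited via $X^m\twoheadrightarrow\Tr_Y X$), one then obtains $\Ext^1_\Lambda(Y/\Tr_Y X,Y/\Tr_Y X)=0$. Plugging these two vanishings into the four BB formulas above, every piece of $\Ext^{\geq 1}_{B_0}(T',T')$ reduces to either an $\Ext^{\geq 2}_\Lambda$-term, $\Ext^1_\Lambda(X,X)$, $\Hom_\Lambda(X,Y/\Tr_Y X)$, or $\Ext^1_\Lambda(Y/\Tr_Y X,Y/\Tr_Y X)$, all zero.

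Finally, for the rank condition, I would decompose $Y=\bigoplus_{i=1}^s Y_i$ into indecomposables; the trace commutes with direct sums of the target, so $Y/\Tr_Y X=\bigoplus_i Y_i/\Tr_{Y_i}X$ with every summand nonzero by hypothesis. The same trace argument yields $\Ext^1_\Lambda(Y_i,\Tr_{Y_i}X)=0$, so applying $\Hom_\Lambda(Y_i,-)$ to the trace sequence for $Y_i$ shows that every endomorphism of $Y_i/\Tr_{Y_i}X$ lifts to an endomorphism of $Y_i$; since $\Tr_{Y_i}X$ is invariant under all endomorphisms of $Y_i$, this produces a surjection $\End_\Lambda(Y_i)\twoheadrightarrow\End_\Lambda(Y_i/\Tr_{Y_i}X)$, so the latter is local and $Y_i/\Tr_{Y_i}X$ is indecomposable. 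Since $F$ and $G$ preserve indecomposable summands, $T'$ has $s+|\{\text{indec.\ summands of }X\}|=n$ non-isomorphic indecomposable summands, matching the rank of $B_0$. The hard part is the pair of vanishings in the previous paragraph: extracting the correct $\Lambda$-module consequences from the hypothesis that no summand of $Y$ is generated by $X$ and from the tilting property of $T=Y\oplus X$; once these are in hand, the BB machinery and the endomorphism-ring argument go through formally.
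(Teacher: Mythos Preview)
Your self-orthogonality computation and your indecomposability argument for each $Y_i/\Tr_{Y_i}X$ are essentially the same as the paper's (the surjection $\End_\Lambda(Y_i)\twoheadrightarrow\End_\Lambda(Y_i/\Tr_{Y_i}X)$ is a clean way to phrase what the paper does via its Lemma~1.6(b)). There is one minor bookkeeping omission --- $\Ext^2_{B_0}(G(X),F(Y/\Tr_YX))\cong\Ext^1_\Lambda(X,Y/\Tr_YX)$ is not on your list of ``pieces'', though it also vanishes from the same trace-sequence calculation --- but that is easily repaired.

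There are, however, two genuine gaps.

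\textbf{Basicness of $Y/\Tr_YX$.} You assert that $T'$ has $n$ \emph{non-isomorphic} indecomposable summands, but you only prove that each $Y_i/\Tr_{Y_i}X$ is nonzero and indecomposable. Nothing you have written rules out $Y_i/\Tr_{Y_i}X\cong Y_j/\Tr_{Y_j}X$ for $i\neq j$; the ring surjections $\End_\Lambda(Y_i)\twoheadrightarrow\End_\Lambda(Y_i/\Tr_{Y_i}X)$ say nothing about this. In the paper this is a separate lemma (Lemma~2.4) with a real argument: one lifts a hypothetical isomorphism $Y_i/\Tr_{Y_i}X\cong Y_j/\Tr_{Y_j}X$ to a map $Y_i\to Y_j$, invokes the Happel--Ringel lemma (any nonzero map between indecomposables with $\Ext^1=0$ over a hereditary algebra is mono or epi), and in each case runs a snake-lemma / splitting argument to reach a contradiction. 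This step is not formal and cannot be absorbed into the endomorphism-ring trick.

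\textbf{Perfect exceptional $\Rightarrow$ tilting.} Even after you know $T'$ is exceptional with $n$ pairwise non-isomorphic indecomposable summands, you are not done: over an algebra of global dimension $2$ this is \emph{not} known to imply the coresolution condition $(c)$ in the definition of tilting (it is an open question in general). The paper handles this by proving a dedicated result (Proposition~2.2): for a tilted algebra $B_0$, every perfect exceptional module is tilting. That proof is substantial --- it passes to the perpendicular category $X^{\mathrm{perp}}$, shows $Y$ is tilting there, and then uses Happel's criterion $\mathsf{T'}^{\perp}\subset\mathsf{fac}\,T'$ via a Bongartz-type construction. Your sketch does not touch condition $(c)$ at all, so as written it establishes only that $T'$ is perfect exceptional, not that it is tilting.
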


\noindent Here $\Lambda$ is a hereditary algebra, $(\mathcal{T}(T_0),\mathcal{F}(T_0))$ is the torsion pair in mod $\Lambda$ associated to $T_0$, and $Tr_{Y}X$ denotes the trace of $X$ in $Y$.

Let $A$ be a finite dimensional $k$ algebra over an algebraically closed field $k$, $\mathcal{T}_A$ be the set of equivalence classes of tilting $A$-modules (tilting modules $T_1$ and $T_2$ are supposed to be in the same class, provided $\mathsf{add}(T_1)=\mathsf{add}(T_2)$). Here $\mathsf{add}(T)\subset \text{mod }A$ denotes the additive closure of $T$. The tilting quiver $\overrightarrow{\mathcal{K}_A}$ was introduced in \cite{HaUn2}, \cite{RiS}: the vertices are elements of $\mathcal{T}_A$, and for two basic tilting $A$-modules, there exists an arrow from $T_1$ to $T_2$ if and only if $T_1=M\oplus X$, $T_2=M\oplus Y$ with $X,Y$ being indecomposable summands belonging to a non-split exact sequence $0\to X\to \overline{M}\to Y\to 0$ with $\overline{M}\in \mathsf{add} M$. Given $T_1,T_2\in\mathcal{T}_A$, a partial order $\leq$ on $\mathcal{T}_A$ is defined via: $T_1\leq T_2$ if and only if $\mathsf{T_1}^{\perp}\subset \mathsf{T_2}^{\perp}$. Here $\mathsf{T_i}^{\perp}$ are full subcategories of mod $A$ (see below). It was proved in \cite{HaUn2}, that the Hasse quiver of the partially ordered set $(\mathcal{T}_A,\leq)$ coincides with the tilting quiver $\overrightarrow{\mathcal{K}_A}$. $\overrightarrow{\mathcal{K}_A}$ turned out to be interesting and useful, because it provides  information about $A$. Under some assumptions on $A$, one can even reconstruct the algebra A from the partially ordered set $(\mathcal{T}_A,\leq)$. The reader is referred to  \cite{HaUn2}, \cite{HaUn3}, \cite {HaUn1} for further details.

Let $T_0$ be a classic tilting $A$-module, $B_0=\End_A(T_0)$. According to the Tilting Theorem (see Theorem \ref{Th0} below), there exist equivalences $\mathcal{T}(T_0)\cong \mathcal{Y}(T_0)$, and $\mathcal{F}(T_0)\cong \mathcal{X}(T_0)$. Here $\mathcal{T}(T_0)$ and $\mathcal{F}(T_0)$, $\mathcal{X}(T_0)$ and $\mathcal{Y}(T_0)$ are full subcategories given by torsion pairs in mod $A$ and mod $B_0$, respectively. Let $\mathcal{T}_{\mathcal{T}}\subset\mathcal{T}_A$ be the subset of $\mathcal{T}_A$ consisting of those equivalence classes whose representatives belong to $\cT(T_0)$; $\mathcal{T}_{\mathcal{F}}\subset\mathcal{T}_A$ be the subset of $\mathcal{T}_A$ consisting of those equivalence classes whose representatives belong to $\mathcal{F}(T_0)$;  
$\mathcal{T}_{\mathcal{T}\cup\mathcal{F}}$ be the subset of $\mathcal{T}_A$ consisting of  equivalence classes whose representatives have indecomposable direct summands only in $\mathcal{T}(T_0)\cup \mathcal{F}(T_0)$  
and let $\mathcal{T}_{\mathcal{T},\mathcal{F}}\subset\mathcal{T}_A$ be the subset of $\mathcal{T}_A$ consisting of elements in $\mathcal{T}_{\mathcal{T}\cup\mathcal{F}}$ but not in the union of $\mathcal{T}_{\mathcal{T}}$ and $\mathcal{T}_{\mathcal{F}}$. Similarly, we can define $\mathcal{T}_{\mathcal{Y}},\mathcal{T}_{\mathcal{X}},\mathcal{T}_{\mathcal{X}\cup\mathcal{Y}}\subset\mathcal{T}_{B_0} $ and $\mathcal{T}_{\mathcal{X},\mathcal{Y}}\subset\mathcal{T}_{B_0}$. It is quite reasonable to consider the relationship between $\overrightarrow{\mathcal{T}_{\mathcal{T}}}$ and $\overrightarrow{\mathcal{T}_{\mathcal{Y}}}$, $\overrightarrow{\mathcal{T}_{\mathcal{F}}}$ and $\overrightarrow{\mathcal{T}_{\mathcal{X}}}$, 
$\overrightarrow{\mathcal{T}_{\mathcal{T}\cup\mathcal{F}}}$ and $\overrightarrow{\mathcal{T}_{\mathcal{X}\cup\mathcal{Y}}}$, $\overrightarrow{\mathcal{T}_{\mathcal{T},\mathcal{F}}}$ and $\overrightarrow{\mathcal{T}_{\mathcal{X},\mathcal{Y}}}$, and then the relationship between $\overrightarrow{\mathcal{K}_A}$ and $\overrightarrow{\mathcal{K}_{B_0}}$. For a subset $\mathcal{D}\subset \mathcal{T}_A$, $\overrightarrow{\mathcal{D}}$ means the full subquiver of $\overrightarrow{\mathcal{K}_A}$ consisting of elements in $\mathcal{D}$.

Theorem \ref{The 1} provides a map $\Phi:\mathcal{T}_{\mathcal{T}\cup\mathcal{F}}\to \mathcal{T}_{B_0}$ (see Proposition \ref{injection} below). Restricting this map to certain subquivers of $\overrightarrow{\mathcal{K}_\Lambda}$ yields some quiver isomorphisms. More precisely, we obtain the following result.

\begin{Theorem}\label{The 2}
When $\Lambda$ is hereditary and $T_0$ is a BB-tilting $\Lambda$-module, there exists isomorphisms of quivers between $\overrightarrow{\mathcal{T}_{\mathcal{T}}}$ and $\overrightarrow{\mathcal{T}_{\mathcal{Y}}}$, $\overrightarrow{\mathcal{T}_{\mathcal{T},\mathcal{F}}}$ and $\overrightarrow{\mathcal{T}_{\mathcal{X},\mathcal{Y}}}$, respectively.
\end{Theorem}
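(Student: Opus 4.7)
The plan is to take the map $\Phi\colon \mathcal{T}_{\mathcal{T}\cup\mathcal{F}}\to \mathcal{T}_{B_0}$ from Proposition~\ref{injection} produced by Theorem~\ref{The 1}, restrict it in the two ways indicated, and verify that each restriction is a bijection with the claimed image that preserves and reflects the Hasse-arrow structure. As a preliminary, note that because $T_0$ is BB-tilting, $\mathcal{F}(T_0)=\add S$ for the unique simple projective non-injective summand $S$, and $\mathcal{X}(T_0)=\add S'$ with $S':=\Ext^{1}_{\Lambda}(T_0,S)$; the hereditary hypothesis on $\Lambda$ makes the tilting torsion pair split, so every basic tilting $\Lambda$-module decomposes as $Y\oplus X$ with $Y\in\mathcal{T}(T_0)$ and $X\in\{0,S\}$, and the hypothesis of Theorem~\ref{The 1} is automatic because $S\notin\mathcal{T}(T_0)$.

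For the first isomorphism, $\Phi$ simplifies on $\mathcal{T}_{\mathcal{T}}$ to $T\mapsto\Hom_{\Lambda}(T_0,T)$, which is the Brenner--Butler equivalence $\mathcal{T}(T_0)\xrightarrow{\sim}\mathcal{Y}(T_0)$ restricted to the tilting objects; this yields the vertex bijection $\mathcal{T}_{\mathcal{T}}\leftrightarrow\mathcal{T}_{\mathcal{Y}}$. Because $\Hom_{\Lambda}(T_0,-)$ is exact on short exact sequences within $\mathcal{T}(T_0)$, the defining sequence of any Hasse arrow in $\overrightarrow{\mathcal{T}_{\mathcal{T}}}$ is sent to the defining sequence of an arrow in $\overrightarrow{\mathcal{T}_{\mathcal{Y}}}$; the inverse equivalence $-\otimes_{B_0}T_0$ gives the reverse transport, completing this quiver isomorphism.

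For the second isomorphism, $\Phi(Y\oplus S)=\Hom_{\Lambda}(T_0,Y/\Tr_{Y}S)\oplus S'$ plainly lies in $\mathcal{T}_{\mathcal{X},\mathcal{Y}}$. To construct an inverse, given $N'\oplus S'\in\mathcal{T}_{\mathcal{X},\mathcal{Y}}$, let $M\in\mathcal{T}(T_0)$ be the Brenner--Butler preimage of $N'$; the tilting condition $\Ext^{1}_{B_0}(S',N')=0$ translates by Brenner--Butler to $\Hom_{\Lambda}(S,M)=0$, so $M$ has no $S$-submodule. Set $r=\dim\Ext^{1}_{\Lambda}(M,S)$ and form the universal extension $0\to S^{r}\to Y\to M\to 0$; using $\Ext^{1}_{\Lambda}(S,-)=0$ together with the Ext-rigidity of $M$ inherited from that of $N'$, one checks that $Y\oplus S$ is a basic tilting $\Lambda$-module, and the vanishing $\Hom_{\Lambda}(S,M)=0$ forces $\Tr_{Y}S=S^{r}$, giving $Y/\Tr_{Y}S\cong M$ and $\Phi(Y\oplus S)=N'\oplus S'$. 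A dimension count at $\Hom_{\Lambda}(S,Y)$ pins $Y$ down up to isomorphism, yielding injectivity. For arrow preservation within $\overrightarrow{\mathcal{T}_{\mathcal{T},\mathcal{F}}}$, the shared $S$-summand is fixed by any Hasse arrow, the exchanged indecomposables lie in $\mathcal{T}(T_0)$, and the defining sequence stays in $\mathcal{T}(T_0)$; since $\Ext^{1}_{\Lambda}(S,-)=0$ makes the trace functor exact on such sequences, applying $\Hom_{\Lambda}(T_0,-)$ produces the defining sequence of an arrow $\Phi(T_1)\to\Phi(T_2)$ in $\overrightarrow{\mathcal{T}_{\mathcal{X},\mathcal{Y}}}$, and the reverse implication follows from the inverse construction.

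The main obstacle is the analysis of the universal extension $Y$: showing both that $Y\oplus S$ is actually tilting (not merely Ext-rigid) and that $\Tr_{Y}S$ coincides with the kernel $S^{r}$ of the extension, so that $\Phi(Y\oplus S)$ really reproduces $N'\oplus S'$. Both rely on the fine interaction between the Ext-compatibilities of the Brenner--Butler equivalences and the BB-specific fact that $\mathcal{F}(T_0)=\add S$ contains only one indecomposable, which in turn controls the socles of modules in $\mathcal{T}(T_0)$ tightly enough to make the whole construction rigid.
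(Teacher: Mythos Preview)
Your argument conflates BB-tilting with APR-tilting. For a BB-tilting module $T_0=P[i]\oplus\tau^{-1}S$ the simple $S$ is \emph{not} assumed projective (that would be the APR case, see Lemma~\ref{Simple}(c)), so the claims that $(\mathcal{T}(T_0),\mathcal{F}(T_0))$ splits and that $\Ext^{1}_{\Lambda}(S,-)=0$ are false in general. These assumptions are load-bearing in your sketch: you invoke $\Ext^{1}_{\Lambda}(S,-)=0$ both to certify that the universal extension $Y\oplus S$ is tilting and to make the ``trace functor exact'' when transporting the defining short exact sequence of a Hasse arrow. In the genuine BB case the first step survives with a different justification (this is exactly the content of Proposition~\ref{surjective}, which uses only $\Ext^{1}_{\Lambda}(S,S)=0$ and several mixed Ext-computations via Lemmas~\ref{L1}, \ref{L2}, \ref{L4}), but the second step does not: there is no reason for $\Tr_{(-)}S$ to be exact on sequences in $\mathcal{T}(T_0)$, so your direct transport of defining sequences for arrows in $\overrightarrow{\mathcal{T}_{\mathcal{T},\mathcal{F}}}$ breaks down.

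The paper avoids this obstacle altogether. Instead of moving exact sequences, it first shows (Proposition~\ref{convex}) that $\overrightarrow{\mathcal{T}_{\mathcal{T},\mathcal{F}}}$ and $\overrightarrow{\mathcal{T}_{\mathcal{X},\mathcal{Y}}}$ are convex in their ambient tilting quivers, then uses that $\Phi_2$ is a bijection of \emph{posets} (via \cite[Theorem~2.7]{HaUn1}) together with the Hasse-diagram description of the tilting quiver (Theorem~\ref{Hasse}). Arrow preservation then becomes a pure order-theoretic statement: an arrow $T_1\to T_2$ is a covering relation, and convexity plus the poset bijection force $\Phi_2(T_1)\to\Phi_2(T_2)$ to be a covering relation as well (and conversely, using \cite[Theorem~1.1]{HaUn4}). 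If you want to repair your approach, the missing ingredients are precisely convexity and the poset-isomorphism step; the exact-sequence transport can be dispensed with entirely.
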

\noindent With this conclusion and some further observations (see Propositions \ref{arrow}, \ref{arrow2}, \ref{arrow3}), we can construct $\overrightarrow{\mathcal{K}_{B_0}}$ from  $\overrightarrow{\mathcal{K}_{\Lambda}}$.

In Section 1, we recall some basic definitions and well-known results, and we will also state some Lemmas for future reference. In Section 2, we will prove Theorem \ref{The 1} and some related results. In Section 3, when $\Lambda$ is hereditary, $T_0$ is a BB-tilting $\Lambda$-module and $B_0=\End_\Lambda(T_0)$, we will prove Theorem \ref{The 2} and use it to construct $\overrightarrow{\mathcal{K}_{B_0}}$ from $\overrightarrow{\mathcal{K}_\Lambda}$. In section 4, we will give some examples to illustrate our results.

Throughout this paper, we suppose $A$ is a finite dimensional $k$ algebra over an algebraically closed field $k$ and assume that $A$ admits $n$ non-isomorphic simple modules. All modules considered here are finitely generated right modules and we use mod $A$ to denote the category of finitely generated right $A$-modules. For two homomorphisms $f:X\to Y$ and $g:Y\to Z$, their composition is written as $gf$. All subcategories are assumed to be full and closed under taking direct summmands. Given an $A$-module $M$,  $\mathsf{add} M$ means the subcategory of $A$-modules which are direct summands of direct sums of $M$, $\mathsf{fac} M$ means the images of $\mathsf{add} M$,  $\mathsf{Gen} M$ means the  subcategory of modules that are generated by $M$,  $\mathsf{Cogen} M$ means the  subcategory of modules that are cogenerated by $M$, $\mathsf{M}^{\perp}$ means the subcategory of $A$-modules $\mathsf{M^{\perp}}=\{N\in\text{mod }A|\Ext^{i}_A(M,N)=0 \text{ for }i\geq 1\}$, ${\rm pd}_AM$ means the projective dimension of $M$. $D: \text{mod }A\to \text{mod }A^{op}$ denotes the standard duality, $\Tr M$ means the transpose of $M$,  and $\tau M=D\Tr M$ denotes the Auslander-Reiten translation. Let $\{P(i)|i=1,2\ldots n\}$ be a complete set of non-isomorphic indecomposable projective $A$-modules and $P[i]=\bigoplus_{j\neq i}P(j)$ (indecomposable injective modules $I(i)$ and simple modules $S(i)$ are denoted similarly).

\section*{Acknowledgements}
The results of this paper are part of the author's doctoral thesis, which he is currently writing at University Kiel. The author thanks his advisor, Rolf Farnsteiner, for his continuous support and comments that helped a lot to improve the exposition of this paper.

\section{Preliminaries}

\begin{Definition}
An $A$-module $T$ is called \textbf{tilting} provided:
\begin{enumerate}
\item The projective dimension of $T$ is finite;
\item $\Ext^{i}_{A}(T,T)=0$ for all $i\geq 1$;
\item There exists an exact sequence: $0 \to A\to T_0 \to T_1 \to\cdots\to T_{r-1}\to T_r\to 0$ with $T_i\in \mathsf{add} T$ for $0\leq i\leq r$.
\end{enumerate}
\end{Definition}

In the following, modules satisfying the condition $(b)$ are called \textbf{self-orthogonal}, modules satisfying the conditions $(a)$ and $(b)$ are called \textbf{exceptional}, and we call a module \textbf{perfect exceptional} if is exceptional and admits $n$ non-isomorphic indecomposable summands. 

A tilting module $T_A$ is called \textbf{classic tilting} if ${\rm pd} _A(T)\leq 1$, and a module $M$ is called \textbf{(classic) partial tilting} if it is a direct summand of a (classic) tilting module.
Partial tilting modules with $n-1$ indecomposable summands will be called \textbf{almost complete tilting modules}.

\begin{Remark}
For the classic case (tilting modules with projective dimension no more than 1) tilting modules are the same as perfect exceptional, but whether this holds in general (tilting modules of finite projective dimension) is an open problem. We will see in Proposition \ref{perfecttilting},  for some special types of algebras, the answer is affirmative. 
As noted in the introduction of \cite{RS}, this follows from the proof of the main result of \cite{HRS}. But our proof is somewhat shorter and easier. 
Due to Bongartz's Lemma, $M$ is classic partial tilting if and only if ${\rm pd}M\leq 1$ and $\Ext^{1}_A(M,M)=0$. But this is not true in general, i.e., exceptional modules may be not partial tilting modules. We refer to  \cite{RS} for more details.
\end{Remark}

Given an $A$-module $T$, consider the full subcategories of mod $A$:
$$\mathcal{T}(T)=\{M\in\text{mod }A|\Ext^{1}_A(T,M)=0)\}$$and $$\mathcal{F}(T)=\{N\in\text{mod }A|\Hom_A(T,N)=0\}.$$
Let $B=\End_A(T)$, we obtain two full subcategories of mod $B$:
 $$\mathcal{X}(T)=\{X\in \text{mod }B|X\otimes_B T=0\}$$ and $$\mathcal{Y}(T)=\{Y\in\text{mod }B| \Tor^{B}_1(Y,T)=0\}.$$
When $T$ is classic tilting, according to \cite{KB}, \cite{HR}, $(\mathcal{T}(T),\mathcal{F}(T))$ and $(\mathcal{X}(T),\mathcal{Y}(T))$ are torsion pairs in mod-$A$ and mod-$B$, respectively. The connections between these two torsion pairs are stated in the  Tilting Theorem.

\begin{TheoremS}[Tilting Theorem]\cite[1.6]{KB}\label{Th0}
Let $A$ be an algebra, $T_A$ be a classic tilting module, $B=\End_A(T)$, and $(\mathcal{T}(T),\mathcal{F}(T))$, $(\mathcal{X}(T),\mathcal{Y}(T))$ be the induced torsion pairs in mod  $A$ and mod $B$, respectively. Then $T$ has the following properties:
\begin{enumerate}
\item The functors $\Hom_A(T,-)$ and $-\otimes_BT$ induce an quasi-inverse equivalence between $\mathcal{T}(T)$ and $\mathcal{Y}(T)$;
\item The functors $\Ext^{1}_A(T,-)$ and $\Tor^{B}_1(-,T)$ induce an quasi-inverse equivalence between $\mathcal{F}(T)$ and $\mathcal{X}(T)$.
\end{enumerate}
\end{TheoremS}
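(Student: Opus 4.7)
The plan is to exploit the adjunction $-\otimes_B T \dashv \Hom_A(T,-)$ together with the hypothesis $\mathrm{pd}_A T \leq 1$, pushing isomorphisms of unit and counit from the ``free'' generators $T \in \mathrm{add}\,T$ and $B \in \mathrm{add}\,B$ (where they are visibly identities) out to all of $\mathcal{T}(T)$ and $\mathcal{Y}(T)$ via projective-type resolutions and the 5-lemma. The $\Ext^1_A(T,-)$/$\Tor_1^B(-,T)$ equivalence in part (b) will then be obtained by a degree-shift argument, most cleanly through the derived equivalence $\mathrm{RHom}_A(T,-) \colon \mathcal{D}^b(\mathrm{mod}\,A) \to \mathcal{D}^b(\mathrm{mod}\,B)$.

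For (a), given $M \in \mathcal{T}(T) = \mathrm{Gen}(T)$, I set $r := \dim_k \Hom_A(T, M)$ and take a canonical surjection $\pi \colon T^{(r)} \twoheadrightarrow M$ for which $\Hom_A(T,\pi)$ is surjective. Let $K := \ker \pi$. The long exact sequence of $\Hom_A(T,-)$, combined with $\Ext^2_A(T,K) = 0$ (from $\mathrm{pd}_A T \leq 1$), forces $\Ext^1_A(T,K) = 0$, so $K \in \mathcal{T}(T)$. The resulting short exact sequence $0 \to \Hom_A(T,K) \to \Hom_A(T,T^{(r)}) \to \Hom_A(T,M) \to 0$ is a presentation in $\mathrm{mod}\,B$ with the first two terms in $\mathrm{add}\,B$. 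Tensoring with $T$ and invoking naturality of the counit $\epsilon$ (with $\epsilon_{T^{(r)}}$ an identity), a 5-lemma argument forces $\epsilon_M$ to be an isomorphism. The same computation yields $\Tor^B_1(\Hom_A(T,M), T) = 0$, placing $\Hom_A(T,M)$ in $\mathcal{Y}(T)$. The inverse direction for $N \in \mathcal{Y}(T)$ is symmetric, using a projective presentation $B^{(s)} \to B^{(r)} \to N \to 0$ in $\mathrm{mod}\,B$ and the defining vanishing $\Tor^B_1(N,T) = 0$ to conclude that $\eta_N$ is an isomorphism and $N \otimes_B T \in \mathcal{T}(T)$.

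For (b), using $\mathrm{pd}_A T \leq 1$, $T$ is a tilting complex and $\mathrm{RHom}_A(T,-)$ induces a derived equivalence. Modules $M \in \mathcal{T}(T)$ are sent to the stalk complex $\Hom_A(T,M)$ in degree $0$ (lying in $\mathcal{Y}(T)$, recovering (a)), while modules $N \in \mathcal{F}(T)$ are sent to the stalk complex $\Ext^1_A(T,N)$ in degree $1$; the quasi-inverse $-\otimes^L_B T$ returns $N$ as $\Tor^B_1(\Ext^1_A(T,N), T)$ in degree $0$. That $\Ext^1_A(T,N) \in \mathcal{X}(T)$ for $N \in \mathcal{F}(T)$ can be checked directly by applying $\Hom_A(-, N)$ to a projective resolution $0 \to Q_1 \to Q_0 \to T \to 0$ and computing $\Ext^1_A(T,N) \otimes_B T$. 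I expect (b) to be the main obstacle: keeping track of the degree shift and verifying that the natural map $\Tor^B_1(\Ext^1_A(T,N), T) \to N$ is an isomorphism requires a careful 5-lemma argument using both the Bongartz sequence $0 \to A \to T^0 \to T^1 \to 0$ and a projective resolution of $\Ext^1_A(T,N)$ over $B$, interlocked with $\mathrm{pd}_A T \leq 1$. In contrast, the $\Hom/\otimes$ equivalence in (a) is essentially formal once the adjunction and the surjective presentations by $\mathrm{add}\,T$ are in place.
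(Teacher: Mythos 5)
The paper does not prove this statement: it is the classical Brenner--Butler/Bongartz Tilting Theorem, quoted verbatim from \cite[1.6]{KB} and used as a black box throughout, so there is no in-paper proof to compare against. Judged on its own, your sketch of part $(a)$ is the standard argument and is essentially sound, but one justification is misattributed: from the long exact sequence of $\Hom_A(T,-)$ applied to $0\to K\to T^{(r)}\to M\to 0$, the vanishing $\Ext^{2}_A(T,K)=0$ does \emph{not} force $\Ext^{1}_A(T,K)=0$; what does is the surjectivity of $\Hom_A(T,\pi)$ together with $\Ext^{1}_A(T,T^{(r)})=0$, which make the connecting map into $\Ext^{1}_A(T,K)$ zero and the next map injective into $0$. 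Two further points you pass over silently: the identification $\mathcal{T}(T)=\mathsf{Gen}\,T$ is itself a nontrivial lemma (this is where axiom $(c)$ of the tilting definition enters, via the Bongartz sequence $0\to A\to T^{0}\to T^{1}\to 0$), and your claim that ``the same computation yields $\Tor^{B}_1(\Hom_A(T,M),T)=0$'' needs $\epsilon_K$ to be injective, not merely surjective, so the argument must first be run for $K$ (or set up as a two-step resolution $T^{(s)}\to T^{(r)}\to M\to 0$) before the $\Tor_1$ vanishing follows from the injectivity of $K\hookrightarrow T^{(r)}$.

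For part $(b)$, invoking the derived equivalence $\mathrm{RHom}_A(T,-)$ is logically admissible but anachronistic and top-heavy: Happel's theorem that $T$ induces a derived equivalence is a strictly stronger statement whose proof subsumes the one you are writing, so as a self-contained proof this is close to assuming the conclusion in a stronger form. A cleaner elementary route --- and the one the paper itself implicitly models in its proof of Lemma \ref{L4} --- is to take a projective cover $P'\to\Ext^{1}_A(T,N)$ in $\operatorname{mod}B$, observe that its kernel lies in $\mathcal{Y}(T)$ because projectives are torsion-free and $\mathcal{Y}(T)$ is closed under submodules, rewrite the resulting presentation as $0\to\Hom_A(T,M')\to\Hom_A(T,\overline{T})\to\Ext^{1}_A(T,N)\to 0$ using part $(a)$, and tensor back with $T$ to recover $0\to\Tor^{B}_1(\Ext^{1}_A(T,N),T)\to M'\to\overline{T}\to 0$ and hence the isomorphism with $N$. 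Your proposal is therefore a correct outline of the standard proof, with the caveats above on the $\Ext^{2}$ slip and the reliance on a much bigger hammer in $(b)$.
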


We will occasionally use the following Lemmas.
\begin{LemmaS}\cite[\uppercase\expandafter{\romannumeral 6}. Theorem 2.5]{ASS}\label{L0}
Let $T_A$ be a classic partial tilting module. The following statements are equivalent:
\begin{enumerate}
\item $T_A$ is a classic tilting module;
\item $\mathsf{Gen} T =\mathcal{T}(T)$;
\item $\mathsf{Cogen} \tau T=\mathcal{F}(T)$;
\item For every module $M\in\mathcal{T}(T)$, there exists a short exact sequence $0\to L\to T_0\to M\to 0$ with $T_0\in \mathsf{add} T$ and $L\in\mathcal{T}(T)$;
\item  For every module $N\in\mathcal{F}(T)$, there exists a short exact sequence $0\to N\to T_1\to K\to 0$ with $T_1\in \mathsf{add} \tau T$ and $K\in\mathcal{F}(T)$.
\end{enumerate}
\end{LemmaS}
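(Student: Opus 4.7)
My plan is to establish $(a) \Rightarrow (b) \Rightarrow (d) \Rightarrow (a)$ and, dually via the Auslander--Reiten translate, $(a) \Rightarrow (c) \Rightarrow (e) \Rightarrow (a)$, which together force all five statements to be equivalent through $(a)$.

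For $(a) \Rightarrow (b)$, the inclusion $\mathsf{Gen}\, T \subseteq \mathcal{T}(T)$ needs only that $T$ is classic partial tilting: for any surjection $T^k \twoheadrightarrow M$ with kernel $K$, the long exact $\Ext_A(T,-)$-sequence gives $\Ext^1_A(T, T^k) \to \Ext^1_A(T, M) \to \Ext^2_A(T, K)$, and both outer terms vanish by $\Ext^1_A(T, T) = 0$ and $\mathrm{pd}\, T \leq 1$. For the reverse inclusion I would invoke the Tilting Theorem \ref{Th0}: for $M \in \mathcal{T}(T)$ the counit $T \otimes_B \Hom_A(T, M) \to M$ is an isomorphism, so a $B$-surjection $B^k \twoheadrightarrow \Hom_A(T, M)$, tensored with $T$, realizes $M$ as a quotient of $T^k$. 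The step $(b) \Rightarrow (d)$ is then constructive: for $M \in \mathcal{T}(T) = \mathsf{Gen}\, T$, take a right $\mathsf{add}\, T$-approximation $\varphi\colon T_0 \to M$, which is surjective since $M \in \mathsf{Gen}\, T$. Setting $L = \ker \varphi$ and applying $\Hom_A(T,-)$, the approximation property makes $\Hom_A(T, T_0) \to \Hom_A(T, M)$ surjective, and together with $\Ext^1_A(T, T_0) = 0$ this forces $\Ext^1_A(T, L) = 0$, placing $L$ in $\mathcal{T}(T)$.

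The crucial step is $(d) \Rightarrow (a)$. I would invoke Bongartz's Lemma to obtain a module $E$ and an exact sequence $0 \to A \to E \to T^r \to 0$ with $T \oplus E$ classic tilting, so $E \in \mathcal{T}(T)$ and $\Ext^1_A(E, T) = 0 = \Ext^1_A(E, E)$. Applying $(d)$ to $E$ yields $0 \to L \to T_0' \to E \to 0$ with $T_0' \in \mathsf{add}\, T$ and $L \in \mathcal{T}(T)$; condition $(d)$ also forces $\mathcal{T}(T) \subseteq \mathsf{Gen}\, T$, so I can pick a resolution $T^s \twoheadrightarrow L$ and combine $\mathrm{pd}\, E \leq 1$ with $\Ext^1_A(E, T) = 0$ to deduce $\Ext^1_A(E, L) = 0$. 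Then $\Hom_A(E, T_0') \twoheadrightarrow \Hom_A(E, E)$, and lifting $\mathrm{id}_E$ splits $T_0'$ as $E \oplus T_0''$, putting $E \in \mathsf{add}\, T$. Thus $T$ has the same additive closure as the tilting module $T \oplus E$, so $T$ is itself tilting, and $(a)$ is recovered.

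The chain $(a) \Rightarrow (c) \Rightarrow (e) \Rightarrow (a)$ follows by mirroring the argument: replace $\mathsf{Gen}\, T$ by $\mathsf{Cogen}\, \tau T$, right $\mathsf{add}\, T$-approximations by left $\mathsf{add}\, \tau T$-approximations, and use the Auslander--Reiten formula to translate $\mathcal{F}(T)$-membership into a cogeneration statement through $\tau T$ (valid since $\mathrm{pd}\, T \leq 1$); this is the replacement for the counit isomorphism used above. I expect the main technical obstacle to be the Ext-vanishing $\Ext^1_A(E, L) = 0$ in $(d) \Rightarrow (a)$, since this is precisely the point at which the three classic-partial-tilting hypotheses and the Ext-vanishing supplied by the Bongartz complement must all cooperate to split off $E$ from $T_0'$.
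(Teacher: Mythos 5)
The paper does not actually prove this lemma---it is quoted from \cite[VI.~Theorem 2.5]{ASS}---so the only meaningful comparison is with the standard textbook argument. Your cycle $(a)\Rightarrow(b)\Rightarrow(d)\Rightarrow(a)$ is that argument and is correct: the counit of the tilting adjunction gives $\mathcal{T}(T)\subseteq\mathsf{Gen}\,T$, the right $\mathsf{add}\,T$-approximation gives $(b)\Rightarrow(d)$, and the Bongartz complement $E$ together with $\Ext^{1}_A(E,L)=0$ splits $E$ off $T_0'$ for $(d)\Rightarrow(a)$; the step you flagged as the main obstacle you in fact carry out correctly.

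The genuine gap is the dual cycle $(a)\Rightarrow(c)\Rightarrow(e)\Rightarrow(a)$, which does not ``mirror'' the first one in the way you assert. First, the Auslander--Reiten formula available for $\mathrm{pd}_AT\leq 1$ is $\Ext^{1}_A(T,M)\cong D\Hom_A(M,\tau T)$: it translates membership in $\mathcal{T}(T)$, not in $\mathcal{F}(T)$, into a statement about maps to $\tau T$ (the condition $\Hom_A(T,N)=0$ is related only to the stable $\underline{\Hom}_A(T,N)\cong D\Ext^{1}_A(N,\tau T)$, and only in one direction). Consequently $\mathcal{F}(T)\subseteq\mathsf{Cogen}\,\tau T$ is not a formal translate of the counit argument; the standard proof takes the universal map $u:N\to(\tau T)^{d}$ and shows $\ker u\in\mathsf{Gen}\,T\cap\mathcal{F}(T)=0$, which already presupposes $(b)$. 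Second, there is no ``co-Bongartz'' complement built out of $\tau T$ that would mirror $0\to A\to E\to T^{r}\to 0$, so $(e)\Rightarrow(a)$ cannot be obtained by dualizing your third step. The workable route is $(e)\Rightarrow(c)$ (immediate, together with $\mathsf{Cogen}\,\tau T\subseteq\mathcal{F}(T)$, which only needs $\Hom_A(T,\tau T)\cong D\Ext^{1}_A(T,T)=0$) followed by $(c)\Rightarrow(b)$: for $M\in\mathcal{T}(T)$ one checks that $M/Tr_MT$ lies in $\mathcal{F}(T)=\mathsf{Cogen}\,\tau T$ while $\Hom_A(M/Tr_MT,\tau T)=0$ by the AR formula, forcing $M=Tr_MT\in\mathsf{Gen}\,T$; one then re-enters your first cycle. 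As written, your dual cycle does not close.
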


\begin{LemmaS}\cite[\uppercase\expandafter{\romannumeral 6}. Lemma 3.2]{ASS}\label{L1}
Let $T_A$ be a classic tilting module, $B=\End_A(T)$, $M,N\in\mathcal{T}(T)$. Then we have functorial isomorphisms:
\begin{enumerate}
\item $\Hom_A(M,N)\cong \Hom_B(\Hom_A(T,M),\Hom_A(T,N))$;
\item $\Ext^{1}_A(M,N)\cong \Ext^{1}_B(\Hom_A(T,M),\Hom_A(T,N))$.
\end{enumerate}
\end{LemmaS}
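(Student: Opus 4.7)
The plan is to derive both isomorphisms from the Tilting Theorem (Theorem \ref{Th0}) together with the $\mathsf{add}\,T$-resolution of objects in $\mathcal{T}(T)$ provided by Lemma \ref{L0}(d). For part (a), I argue directly from the equivalence; for part (b), I compare two four-term exact sequences via the naturality of the isomorphism in part (a).

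\emph{Part (a).} By Theorem \ref{Th0}, the functor $\Hom_A(T,-)\colon \mathcal{T}(T)\to \mathcal{Y}(T)$ is an equivalence of categories with quasi-inverse $-\otimes_B T$. Since $M,N\in\mathcal{T}(T)$, fullness and faithfulness on the domain yield a natural isomorphism
$$\Hom_A(M,N)\xrightarrow{\sim}\Hom_B(\Hom_A(T,M),\Hom_A(T,N)),\qquad \phi\mapsto\Hom_A(T,\phi).$$

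\emph{Part (b).} Using Lemma \ref{L0}(d), I would fix a short exact sequence
$$0\to L\to T'\to M\to 0$$
with $T'\in\mathsf{add}\,T$ and $L\in\mathcal{T}(T)$. Applying $\Hom_A(-,N)$ and noting that $\Ext^1_A(T',N)=0$ (since $T'\in\mathsf{add}\,T$ and $N\in\mathcal{T}(T)$) produces the exact sequence
$$0\to\Hom_A(M,N)\to\Hom_A(T',N)\to\Hom_A(L,N)\to\Ext^1_A(M,N)\to 0.$$
On the other hand, applying $\Hom_A(T,-)$ to the same short exact sequence, and using $\Ext^1_A(T,L)=0$ (since $L\in\mathcal{T}(T)$), yields a short exact sequence
$$0\to\Hom_A(T,L)\to\Hom_A(T,T')\to\Hom_A(T,M)\to 0$$
in $\text{mod }B$ whose middle term is projective because $T'\in\mathsf{add}\,T$ and $B=\End_A(T)$. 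Applying $\Hom_B(-,\Hom_A(T,N))$ to this projective presentation, and using projectivity of $\Hom_A(T,T')$ to annihilate the next $\Ext^1$-term, gives a second four-term exact sequence ending in $\Ext^1_B(\Hom_A(T,M),\Hom_A(T,N))$.

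To conclude, I would assemble these two sequences into a commutative ladder whose vertical maps on the first three terms are the isomorphisms supplied by part (a), applied to the pairs $(M,N)$, $(T',N)$ and $(L,N)$ (each first argument lying in $\mathcal{T}(T)$, with $T'\in\mathsf{add}\,T\subset\mathcal{T}(T)$ since $T$ is tilting). The five lemma then forces the induced map on the fourth term to be an isomorphism, giving $\Ext^1_A(M,N)\cong\Ext^1_B(\Hom_A(T,M),\Hom_A(T,N))$. The main point to verify carefully is commutativity of the ladder; this reduces to the observation that the isomorphism of part (a) is \emph{defined} by $\phi\mapsto\Hom_A(T,\phi)$, so compatibility with the maps in both sequences is an immediate consequence of the functoriality of $\Hom_A(T,-)$.
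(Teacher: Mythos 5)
Your proof is correct. The paper does not actually prove this lemma---it is imported from [ASS, VI.3.2]---but your argument is exactly the (dual of the) technique the paper itself uses to prove Lemma \ref{L2} and Lemma \ref{L4}: take the $\mathsf{add}\,T$-resolution $0\to L\to T'\to M\to 0$ from Lemma \ref{L0}(d), produce the two four-term exact sequences (one from $\Hom_A(-,N)$, one from $\Hom_B(-,\Hom_A(T,N))$ applied to the projective presentation $0\to\Hom_A(T,L)\to\Hom_A(T,T')\to\Hom_A(T,M)\to 0$), and identify the cokernels using part (a) and the functoriality of $\Hom_A(T,-)$. All the vanishing statements you invoke ($\Ext^1_A(T',N)=0$, $\Ext^1_A(T,L)=0$, projectivity of $\Hom_A(T,T')$ over $B$) are justified, and the ladder commutes for the reason you give. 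The only point left slightly implicit is functoriality of the resulting isomorphism in $M$ (your construction depends on the chosen resolution, so one must check independence via the usual comparison of resolutions); this is routine and does not affect correctness.
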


The following Lemma is a special case of a dual version of Lemma \ref{L1}. For the reader's convenience, we write down the proof.

\begin{LemmaS}\label{L2}
Let $\Lambda$ be hereditary, $T_\Lambda$ be tilting, $B=\End_\Lambda(T)$, $M,N\in\mathcal{F}(T)$. Then we have functorial isomorphisms:
\begin{enumerate}
\item $\Hom_\Lambda(M,N)\cong \Hom_B(\Ext^{1}_\Lambda(T,M),\Ext^{1}_\Lambda(T,N))$;
\item $\Ext^{1}_\Lambda(M,N)\cong \Ext^{1}_B(\Ext^{1}_\Lambda(T,M),\Ext^{1}_\Lambda(T,N))$.
\end{enumerate}
\end{LemmaS}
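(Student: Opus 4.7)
My plan is to deduce both statements from the categorical equivalence supplied by Theorem~\ref{Th0}(b): since $\Lambda$ is hereditary, $\mathrm{pd}_\Lambda T\leq 1$ and $T$ is classic tilting, so the Tilting Theorem gives an equivalence $\Ext^1_\Lambda(T,-)\colon \mathcal{F}(T)\to \mathcal{X}(T)$ with quasi-inverse $\Tor^B_1(-,T)$. Part (a) is then immediate from the fact that an equivalence of categories is fully faithful.

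For part (b) the idea is to upgrade this to an equivalence of extension-closed subcategories. Since $\mathcal{F}(T)$ and $\mathcal{X}(T)$ are torsion-free classes, both are closed under extensions in their ambient categories, so $\Ext^1$ computed inside these subcategories coincides with the ambient $\Ext^1$. Given a representative $0\to N\to E\to M\to 0$ of a class in $\Ext^1_\Lambda(M,N)$, this closure places $E\in \mathcal{F}(T)$, so $\Hom_\Lambda(T,-)$ annihilates all three terms; combined with $\Ext^2_\Lambda=0$ from the hereditary hypothesis, the long exact sequence of $\Hom_\Lambda(T,-)$ collapses to
$$0\to \Ext^1_\Lambda(T,N)\to \Ext^1_\Lambda(T,E)\to \Ext^1_\Lambda(T,M)\to 0,$$
a short exact sequence that lies in $\mathcal{X}(T)$. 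This assignment defines a natural homomorphism $\phi_{M,N}\colon \Ext^1_\Lambda(M,N)\to \Ext^1_B(\Ext^1_\Lambda(T,M),\Ext^1_\Lambda(T,N))$.

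To show $\phi_{M,N}$ is bijective I would use the equivalence directly. For injectivity, a splitting of $\phi_{M,N}([\xi])$ lifts, through the full faithfulness from (a), to a section of $E\to M$ in $\mathcal{F}(T)$, forcing $\xi$ to split. For surjectivity, any extension $0\to \Ext^1_\Lambda(T,N)\to Z\to \Ext^1_\Lambda(T,M)\to 0$ has $Z\in\mathcal{X}(T)$ by extension closure, hence $Z\cong \Ext^1_\Lambda(T,E)$ for some $E\in \mathcal{F}(T)$, and the two structure maps lift to morphisms $N\to E$ and $E\to M$ in $\mathcal{F}(T)$. The main obstacle is then to verify that the resulting candidate $0\to N\to E\to M\to 0$ is actually short exact in $\mathrm{mod}\,\Lambda$, rather than merely a four-term sequence of lifts: this I would handle by forming $\ker(E\to M)$ and $\im(E\to M)$ inside $\mathcal{F}(T)$, which is closed under subobjects, and transporting them through the equivalence; they correspond to $\ker(Z\to \Ext^1_\Lambda(T,M))\cong \Ext^1_\Lambda(T,N)$ and $\im(Z\to \Ext^1_\Lambda(T,M))=\Ext^1_\Lambda(T,M)$, so full faithfulness yields $\ker(E\to M)\cong N$ and $\im(E\to M)=M$, as required.
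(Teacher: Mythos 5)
Your argument is correct, and it takes a genuinely different route from the paper's. The paper proves part (b) by invoking Lemma~\ref{L0} to build a coresolution $0\to N\to T'\to N'\to 0$ with $T'\in\mathsf{add}\,\tau T$, applying $\Hom_\Lambda(T,-)$ and then $\Hom_B(\Ext^1_\Lambda(T,M),-)$ (using that $\Ext^1_\Lambda(T,T')$ is an injective $B$-module), and comparing the resulting four-term exact sequence with the one obtained from $\Hom_\Lambda(M,-)$ and the Auslander--Reiten formula. Your approach bypasses this machinery entirely: you exploit that $\mathcal{F}(T)$ and $\mathcal{X}(T)$ are extension-closed (note: $\mathcal{X}(T)$ is a \emph{torsion} class, not torsion-free as you wrote, but both are closed under extensions) and that the Tilting Theorem gives an exact equivalence between them, matching up Yoneda extension classes directly. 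This is conceptually cleaner, but the surjectivity step is the place where the real content lives and your sketch is a bit compressed. The assertion that $\ker(E\to M)$ and $\mathrm{im}(E\to M)$ ``correspond'' under the equivalence to the kernel and image of $Z\to\Ext^1_\Lambda(T,M)$ needs two observations to be watertight: (i) equivalences preserve categorical kernels and cokernels; (ii) the ambient-category kernel and image already lie in the respective subcategory, so the subcategory notions agree with the ambient ones --- for $\ker(g)$ this is automatic since $\mathcal{F}(T)$ is closed under subobjects, but for $\ker(\beta)$ and $\mathrm{im}(\beta)$ in $\mathcal{X}(T)$ (which is \emph{not} closed under subobjects) it holds only because you are starting from a short exact sequence, so those objects are $\Ext^1_\Lambda(T,N)$ and $\Ext^1_\Lambda(T,M)$ by hypothesis. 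A slightly more economical way to close the surjectivity argument is to apply the quasi-inverse $\Tor^B_1(-,T)$ directly to the short exact sequence in $\mathcal{X}(T)$; since $\mathrm{pd}_B T\leq 1$ and $X\otimes_B T=0$ for $X\in\mathcal{X}(T)$, the long exact Tor sequence collapses to a short exact sequence, which the natural isomorphism of Theorem~\ref{Th0}(b) identifies with $0\to N\to E\to M\to 0$. Either way, what your approach buys is a uniform treatment that does not need Lemma~\ref{L0} or the fact that $\Ext^1_\Lambda(T,T')$ is injective; what the paper's approach buys is explicitness and avoidance of any appeal to categorical generalities about equivalences preserving limits.
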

\begin {proof}
The statement $(a)$ is a direct consequence of the Tilting Theorem \ref{Th0}, so it suffices  to show $(b)$. Since $N\in\mathcal{F}(T)$, Lemma \ref{L0} provides an exact sequence 
$$\xymatrix@1{0\ar[r]&N\ar[r]&T^{'}\ar[r]&N^{'}\ar[r]&0&(\dagger)
}$$
such that $T^{'}\in \mathsf{add}\tau T$, $N^{'}\in\mathcal{F}(T)$.
 Since $N^{'}\in\mathcal{F}(T)$ and  ${\rm pd}_\Lambda T\leq 1$, applying the functor $\Hom_\Lambda(T,-)$ to $(\dagger)$, we obtain an exact sequence of $B$-modules
$$\xymatrix@1{0\ar[r]&\Ext^{1}_\Lambda(T,N)\ar[r]&\Ext^{1}_\Lambda(T,T^{'})\ar[r]&\Ext^{1}_\Lambda(T,N^{'})\ar[r]&0.&(\dagger\dagger)
}$$
Since $\Ext^{1}_\Lambda(T,T^{'})$ is injective as $B$-module (\cite[\uppercase\expandafter{\romannumeral 6} Proposition 5.8]{ASS}), applying the functor $\Hom_B(\Ext^{1}_\Lambda(T,M),-)$ to $(\dagger\dagger)$ yields an exact sequence
$$\xymatrix@1{0\ar[r] &\Hom_B(\Ext^{1}_\Lambda(T,M),\Ext^{1}_\Lambda(T,N))\ar[r]& \Hom_B(\Ext^{1}_\Lambda(T,M),\Ext^{1}_\Lambda(T,T^{'}))\ar[r]&}$$
$$\xymatrix@1{ \Hom_B(\Ext^{1}_\Lambda(T,M),\Ext^{1}_\Lambda(T,N^{'}))\ar[r]& \Ext^{1}_B(\Ext^{1}_\Lambda(T,M),\Ext^{1}_\Lambda(T,N))\ar[r]& 0.}$$
The Auslander-Reiten formulas imply $\Ext^{1}_\Lambda(M,T^{'})\cong D\Hom_\Lambda(\tau^{-1}T^{'},M)=0$. Applying the functor $\Hom_\Lambda(M,-)$ to $(\dagger)$ we thus obtain 
$$\xymatrix@1{0\ar[r] &\Hom_\Lambda(M,N)\ar[r]& \Hom_\Lambda(M,T^{'})\ar[r]& \Hom_\Lambda(M,N^{'})\ar[r]& \Ext^{1}_\Lambda(M,N)\ar[r]& 0.}$$
Comparing it with the preceding exact sequence, the result follows the functorial isomorphism of $(a)$.
\end{proof}

\begin{LemmaS}\cite[\uppercase\expandafter{\romannumeral 6} Lemma 5.5]{ASS}\label{L3}
Let $T_A$ be a classic tilting module, $B=\End_A(T)$. If $M\in \mathcal{T}(T)$ and $N\in\mathcal{F}(T)$, then, for $j\geq 1$, there is an isomorphism
$$\Ext^{j}_A(M,N)\cong \Ext^{j-1}_B(\Hom_A(T,M),\Ext^{1}_A(T,N)).$$
\end{LemmaS}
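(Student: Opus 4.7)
I would argue by induction on $j \geq 1$, reducing to the base case $j = 1$ via simultaneous dimension shifts on the $A$-side and the $B$-side. Write $\bar{M} := \Hom_A(T, M)$ and $\bar{N} := \Ext^1_A(T, N)$, so the desired statement reads $\Ext^j_A(M, N) \cong \Ext^{j-1}_B(\bar{M}, \bar{N})$. By Lemma \ref{L0}(d) applied to $M \in \mathcal{T}(T)$, I would fix a short exact sequence $0 \to L \to T_1 \to M \to 0$ with $T_1 \in \add T$ and $L \in \mathcal{T}(T)$. Since $\Ext^1_A(T, -)$ vanishes on $\mathcal{T}(T)$, applying $\Hom_A(T, -)$ produces a short exact sequence $0 \to \bar{L} \to \bar{T_1} \to \bar{M} \to 0$ of $B$-modules with $\bar{T_1}$ projective.

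Applying $\Hom_A(-, N)$ to the first sequence and using $\Hom_A(T_1, N) = 0$ (since $N \in \mathcal{F}(T)$) together with $\Ext^i_A(T_1, N) = 0$ for $i \geq 2$ (since ${\rm pd}_A T \leq 1$), the long exact sequence yields $\Ext^j_A(M, N) \cong \Ext^{j-1}_A(L, N)$ for $j \geq 3$ and a right-exact sequence $\Ext^1_A(T_1, N) \to \Ext^1_A(L, N) \to \Ext^2_A(M, N) \to 0$ at $j = 2$. Applying $\Hom_B(-, \bar{N})$ to the $B$-side sequence and using the projectivity of $\bar{T_1}$ gives $\Ext^j_B(\bar{M}, \bar{N}) \cong \Ext^{j-1}_B(\bar{L}, \bar{N})$ for $j \geq 2$ and the parallel right-exact sequence $\Hom_B(\bar{T_1}, \bar{N}) \to \Hom_B(\bar{L}, \bar{N}) \to \Ext^1_B(\bar{M}, \bar{N}) \to 0$ at $j = 1$. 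Decomposing $\bar{T_1}$ into indecomposable projective summands of the form $eB$ (for idempotents $e \in B$) yields the canonical identification $\Hom_B(\bar{T_1}, \bar{N}) \cong \bar{N}e \cong \Ext^1_A(T_1, N)$, under which the two right-exact sequences become structurally parallel.

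The argument thus hinges on the base case $j = 1$: constructing a natural isomorphism $\Psi: \Ext^1_A(M, N) \xrightarrow{\sim} \Hom_B(\bar{M}, \bar{N})$. Given an extension $\xi: 0 \to N \to E \to M \to 0$, applying $\Hom_A(T, -)$ yields, after invoking $\Hom_A(T, N) = 0$ and $\Ext^1_A(T, M) = 0$, the exact sequence $0 \to \Hom_A(T, E) \to \bar{M} \xrightarrow{\delta_\xi} \bar{N} \to \Ext^1_A(T, E) \to 0$; the assignment $[\xi] \mapsto \delta_\xi$ defines $\Psi$. Showing $\Psi$ is bijective is the main obstacle: I would compare the two right-exact sequences from the previous paragraph via the Five Lemma, the key technical check being that the connecting map $\Ext^1_A(T_1, N) \to \Ext^1_A(L, N)$ on the $A$-side corresponds under $\Psi$ to the map $\Hom_B(\bar{T_1}, \bar{N}) \to \Hom_B(\bar{L}, \bar{N})$ induced by $\bar{L} \to \bar{T_1}$, which amounts to naturality of $\Psi$ in the first variable together with a diagram chase through the Tilting Theorem (Theorem \ref{Th0}). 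Once the base case is known, the inductive step is routine: for $j \geq 2$, combining the two shift isomorphisms with the inductive hypothesis applied to $L$ gives $\Ext^j_A(M, N) \cong \Ext^{j-1}_A(L, N) \cong \Ext^{j-2}_B(\bar{L}, \bar{N}) \cong \Ext^{j-1}_B(\bar{M}, \bar{N})$, completing the proof.
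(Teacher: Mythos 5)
Note first that the paper does not prove Lemma \ref{L3} at all: it is quoted from \cite{ASS}, so the only internal point of comparison is the paper's proof of the dual statement, Lemma \ref{L4}. There the author resolves $\Ext^1_A(T,N)$ by a projective cover on the $B$-side, transports that sequence back to $\modd A$ through the quasi-inverse equivalences to obtain $0\to N\to M'\to \overline T\to 0$, and compares the two long exact sequences; the degree-zero identification needed there is supplied by Lemma \ref{L1}, so no separate base case has to be manufactured. Your proposal runs in the opposite direction (resolve $M$ by $\add T$ on the $A$-side via Lemma \ref{L0}(d) and push forward), which is a standard and workable route, but it forces you to prove the $j=1$ isomorphism $\Ext^1_A(M,N)\cong\Hom_B(\Hom_A(T,M),\Ext^1_A(T,N))$ by hand, and that is exactly where the write-up goes wrong.

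Concretely: the two right-exact sequences you propose to compare terminate in $\Ext^2_A(M,N)$ and $\Ext^1_B(\bar M,\bar N)$ (in your notation $\bar M=\Hom_A(T,M)$, $\bar N=\Ext^1_A(T,N)$), so comparing them can only ever yield the case $j=2$; moreover that comparison needs $\Psi_L$ to be an isomorphism, i.e.\ the case $j=1$ for $L$, so as a proof that ``$\Psi$ is bijective'' the argument is circular, and there is no finite $\add T$-resolution of $M$ along which to induct. What actually proves the base case is the \emph{left}-exact portion of the same two long exact sequences: since $\mathcal T(T)=\Gen T$ one has $\Hom_A(L,N)=0$, giving $0\to\Ext^1_A(M,N)\to\Ext^1_A(T_1,N)\to\Ext^1_A(L,N)$, to be compared via the natural transformation $\Psi$ with $0\to\Hom_B(\bar M,\bar N)\to\Hom_B(\bar T_1,\bar N)\to\Hom_B(\bar L,\bar N)$. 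Here $\Psi_{T_1}$ is an isomorphism (your $\bar N e$ computation), $\Psi_L$ is injective because running the same one-step comparison for $L$ exhibits it as a restriction of the isomorphism $\Psi_{T_1'}$, and the four lemma then makes $\Psi_M$ bijective. With the base case in hand, your cokernel comparison of the right-exact sequences settles $j=2$, and the shift isomorphisms settle $j\ge 3$. Finally, your closing chain $\Ext^j_A(M,N)\cong\Ext^{j-1}_A(L,N)\cong\Ext^{j-2}_B(\bar L,\bar N)\cong\Ext^{j-1}_B(\bar M,\bar N)$ is valid only for $j\ge 3$: at $j=2$ neither shift isomorphism holds (the maps $\Ext^1_A(T_1,N)\to\Ext^1_A(L,N)$ and $\Hom_B(\bar T_1,\bar N)\to\Hom_B(\bar L,\bar N)$ need not vanish), so the $j=2$ case cannot be folded into the ``routine'' induction and must be handled by the five-lemma step.
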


The following Lemma is dual to Lemma \ref{L3}, we write down the proof for convenience.

\begin{LemmaS}\label{L4}
Let $T_A$ be a classic tilting module, $B=\End_A(T)$. If $M\in \mathcal{T}(T)$ and $N\in\mathcal{F}(T)$, then, for $j\geq 0$, there is an isomorphism
$$\Ext^{j+1}_B(\Ext^{1}_A(T,N),\Hom_A(T,M))\cong \Ext^{j}_A(N,M). $$
\end{LemmaS}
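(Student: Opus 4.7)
The plan is to mimic the strategy of the proof of Lemma \ref{L2}, transposing the roles of the two arguments.

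First I would invoke Lemma \ref{L0}(e) to obtain a short exact sequence
$$(\ddagger):\ 0 \to N \to T^{*} \to N^{*} \to 0$$
with $T^{*} \in \mathsf{add}\,\tau T$ and $N^{*} \in \mathcal{F}(T)$. Applying $\Hom_{A}(T,-)$ and using $\Hom_{A}(T,\mathcal{F}(T)) = 0$ yields in mod $B$ the short exact sequence
$$(\ddagger\ddagger):\ 0 \to \Ext^{1}_{A}(T,N) \to \Ext^{1}_{A}(T,T^{*}) \to \Ext^{1}_{A}(T,N^{*}) \to 0,$$
whose middle term is injective as a $B$-module by \cite[\uppercase\expandafter{\romannumeral 6} Proposition 5.8]{ASS}, exactly as in the proof of Lemma \ref{L2}.

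Next I would apply $\Hom_{A}(-,M)$ to $(\ddagger)$ and $\Hom_{B}(-,\Hom_{A}(T,M))$ to $(\ddagger\ddagger)$, producing two parallel long exact sequences. Since $\Ext^{1}_{A}(T,-)$ sends $\mathcal{F}(T)$ into $\mathcal{X}(T)$ while $\Hom_{A}(T,M) \in \mathcal{Y}(T)$, the torsion-pair vanishing $\Hom_{B}(\mathcal{X}(T),\mathcal{Y}(T)) = 0$ collapses the initial $\Hom_{B}$ terms on the $B$-side. An induction on $j$, combined with a Five Lemma comparison of the two long exact sequences at each degree, reduces the statement to the base case $j = 0$, namely $\Hom_{A}(N,M) \cong \Ext^{1}_{B}(\Ext^{1}_{A}(T,N),\Hom_{A}(T,M))$. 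This base case I would prove by constructing the natural map explicitly: given $f \colon N \to M$, the pushout of $(\ddagger)$ along $f$ produces an extension $0 \to M \to E \to N^{*} \to 0$ in mod $A$, and applying $\Hom_{A}(T,-)$ together with the resulting connecting homomorphism yields the desired class in $\Ext^{1}_{B}(\Ext^{1}_{A}(T,N),\Hom_{A}(T,M))$.

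The main obstacle I expect is the inductive step. The Five Lemma comparison requires the auxiliary identification $\Ext^{j+1}_{B}(\Ext^{1}_{A}(T,T^{*}),\Hom_{A}(T,M)) \cong \Ext^{j}_{A}(T^{*},M)$, and since $\Ext^{1}_{A}(T,T^{*})$ is injective but not in general projective in mod $B$, the usual dimension-shift vanishing does not apply in the first argument. One therefore has to iterate $(\ddagger)$ into a longer coresolution of $N$ by modules in $\mathsf{add}\,\tau T$ and track the resulting syzygies carefully. Conceptually the whole statement reflects the derived tilting equivalence under which $N \in \mathcal{F}(T)$ corresponds to $\Ext^{1}_{A}(T,N)[-1]$ and $M \in \mathcal{T}(T)$ to $\Hom_{A}(T,M)[0]$, so that the claimed isomorphism is merely a degree shift of Hom spaces in the tilted derived category.
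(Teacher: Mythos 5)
Your approach has a genuine gap that cannot be patched in the way you suggest. You yourself identify the obstruction correctly: after applying $\Hom_A(T,-)$ to $0\to N\to T^*\to N^*\to 0$ you obtain a short exact sequence in $\mathrm{mod}\,B$ whose middle term $\Ext^1_A(T,T^*)$ is \emph{injective}. But injectivity is useful for vanishing of $\Ext^j_B(-,\Ext^1_A(T,T^*))$, i.e.\ when the module sits in the \emph{second} variable. Here it sits in the \emph{first} variable of $\Ext^{j+1}_B(\Ext^1_A(T,N),\Hom_A(T,M))$, so no dimension shift occurs. Iterating to a longer $\mathsf{add}\,\tau T$-coresolution of $N$ only makes this worse: applying $\Ext^1_A(T,-)$ to such a coresolution produces an injective coresolution of $\Ext^1_A(T,N)$ in $\mathrm{mod}\,B$, which computes $\Ext^j_B(Z,\Ext^1_A(T,N))$ for varying $Z$ — the wrong Ext. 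The sketch of the base case $j=0$ is also not convincing: applying $\Hom_A(T,-)$ to the pushout sequence $0\to M\to E\to N^*\to 0$ kills the third term (since $N^*\in\mathcal{F}(T)$) and there is no fourth term (since $\Ext^1_A(T,M)=0$), so no extension class emerges.

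The paper takes a structurally different route which avoids this. Rather than coresolving $N$ in $\mathrm{mod}\,A$, it takes a \emph{projective cover} of $\Ext^1_A(T,N)$ in $\mathrm{mod}\,B$. Since projective $B$-modules are torsion-free and $\mathcal{Y}(T)$ is closed under submodules, both the cover $P'$ and its kernel $K'$ lie in $\mathcal{Y}(T)$, hence have the form $\Hom_A(T,\overline{T})$ and $\Hom_A(T,M')$ with $\overline{T}\in\mathsf{add}\,T$, $M'\in\mathcal{T}(T)$. Applying $-\otimes_B T$ turns this into $0\to N\to M'\to\overline{T}\to 0$ in $\mathrm{mod}\,A$. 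Now the dimension shift works on both sides: on the $B$-side because the middle term is \emph{projective} (correct variable), on the $A$-side because $\overline T\in\mathsf{add}\,T$ and $M\in\mathcal{T}(T)$ force $\Ext^{\geq 1}_A(\overline T,M)=0$. Lemma~\ref{L1} identifies the remaining terms. Note that your plan actually mimics Lemma~\ref{L2}, which is the wrong model: there both arguments lie in $\mathcal{F}(T)$, whereas Lemma~\ref{L4} pairs a module in $\mathcal{X}(T)$ with one in $\mathcal{Y}(T)$, and the correct analogue to dualize is Lemma~\ref{L3}, exactly as the paper states.
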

\begin{proof} 
Consider the short exact sequence of $B$-modules
$$\xymatrix@1{0\ar[r]&K^{'}\ar[r]&P^{'}\ar[r]&\Ext^{1}_A(T,N)\ar[r]&0,&(*)}$$
such that $P^{'}\to \Ext^{1}_A(T,N)$ is the projective cover of $\Ext^{1}_A(T,N)$. 
Since projective $B$-modules are torsion free and  torsion free modules are closed under submodules, $P^{'}$ and $K^{'}$ belong to $\mathcal{Y}(T)$. According to the Tilting Theorem \ref{Th0}, there exists $M^{'}\in\mathcal{T}(T)$ such that $K^{'}\cong \Hom_A(T,M^{'})$, and since $P^{'}$ is projective $P^{'}\cong \Hom_A(T,\overline{T})$ for some $\overline{T}\in \mathsf{add} T$ . Hence the exact sequence$(*)$ can be rewritten as 
$$\xymatrix@1{0\ar[r]&\Hom_A(T,M^{'})\ar[r]&\Hom_A(T,\overline{T})\ar[r]&\Ext^{1}_A(T,N)\ar[r]&0.&(**)}$$
Applying the functor $-\otimes T$ to $(**)$, \cite[1.6]{KB} yields an exact sequence of $A$-modules
$$0\to \Tor^{B}_1(\Ext^{1}_A(T,N),T)\to \Hom_A(T,M^{'})\otimes_B T\to \Hom_A(T,\overline{T})\otimes_BT\to 0.$$
According to \cite[1.6]{KB}, the preceding exact sequence is isomorphic to  
$$\xymatrix@1{0\ar[r]&N\ar[r]&M^{'}\ar[r]&\overline{T}\ar[r]&0.&(***)}$$
Since $\Ext^{1}_A(T,N)\in\mathcal{X}(T)$ and $\Hom_A(T,M)\in\mathcal{Y}(T)$, $\Hom_B(\Ext^{1}_A(T,N),\Hom_A(T,M))=0$.
Then since $\Hom_A(T,\overline{T})$ is a projective $B$-module, applying the functor $\Hom_B(-,\Hom_A(T,M))$ to $(**)$, we obtain an exact sequence 
$$\xymatrix@1{0\ar[r]&\Hom_B(\Hom(T,\overline{T}),\Hom_A(T,M))\ar[r]& \Hom_B(\Hom_A(T,M^{'}),\Hom_A(T,M))\ar[r]&}$$
$$\xymatrix@1{\Ext^{1}_B(\Ext^{1}_A(T,N),\Hom_A(T,M))\ar[r]& 0&(****)}$$
and isomorphisms $\Ext^{i}_B(\Hom_A(T,M^{'}),\Hom_A(T,M))\cong \Ext^{i+1}_B(\Ext^{1}_A(T,N),\Hom_A(T,M))(\ddagger)$
for $i\geq 1$. Applying the functor $\Hom_A(-,M)$ to $(***)$, we obtain a short exact sequence 
$$\xymatrix@1{0\ar[r]&\Hom_A(\overline{T},M)\ar[r]& \Hom_A(M^{'},M)\ar[r]& \Hom_A(N,M)\ar[r]& 0&(*****)}$$
and isomorphisms $\Ext^{i}_A(M^{'},M)\cong \Ext^{i}_A(N,M)(\ddagger\ddagger)$ for $i\geq 1$. Lemma \ref{L1}, applied to $\overline{T},M,M^{'}\in\mathcal{T}(T)$, shows that the first two terms of the sequences $(****)$ and $(*****)$ are isomorphic. As the isomorphisms are compatible with the sequences, we obtain $\Ext^{1}_B(\Ext^{1}_A(T,N),\Hom_A(T,M))\cong \Hom_A(N,M)$ as well as $\Ext^{i+1}_B(\Ext^{1}_A(T,N),\Hom_A(T,M))$ $\stackrel{(\ddagger)}\cong \Ext^{i}_B(\Hom_A(T,M^{'}),\Hom_A(T,M))$ $\cong \Ext^{i}_A(M^{'},M)\stackrel{(\ddagger\ddagger)}\cong \Ext^{i}_A(N,M)$ for $i\geq 1$.
\end{proof}

\begin{LemmaS}\label{L5}
Let $\xymatrix@1{0\ar[r]&X\ar[r]^{f}&Y\ar[r]^{g}&Z\ar[r]&0}$ be a short exact sequence of $A$-modules. Then the following statements hold:
\begin{enumerate}
\item If $X\ne 0$ and for each $\varphi:X\to X$, there exists $\phi: Y\to Y$ such that $\phi f=f \varphi$, then $Y$ being indecomposable implies $X$ is indecomposable.
\item If $Z\not=0$ and for each $\psi:Z\to Z$ there exists $\phi:Y\to Y$ such that $g\phi=\psi g$, then $Y$ being indecomposable implies $Z$ is indecomposable.
\item If $\Hom_A(Y,X)=\Ext^{1}_{A}(Y,X)=0$ and for each $\phi:Y\to Y$ there exists $\psi:Z\to Z$ such that $\psi g=g\phi$, then $Z$ being indecomposable implies $Y$ is indecomposable.
\end{enumerate}
\end{LemmaS}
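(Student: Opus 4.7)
The plan is to use throughout the standard fact that a nonzero module of finite length over $A$ is indecomposable if and only if its endomorphism ring is local, in which case every endomorphism is either nilpotent or an automorphism. For parts $(a)$ and $(b)$ I would take an idempotent coming from a hypothetical nontrivial decomposition, lift it to an endomorphism of $Y$ via the given hypothesis, and then rule out both alternatives permitted by the locality of $\End_A(Y)$. For $(c)$ I would instead upgrade the lifting condition to a ring isomorphism $\End_A(Y)\cong \End_A(Z)$ and transport locality from $Z$ to $Y$.

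For $(a)$, suppose $X=X_1\oplus X_2$ with $X_1,X_2\neq 0$ and let $e:X\to X$ be the idempotent projection onto $X_1$. The hypothesis produces $\phi\in \End_A(Y)$ with $\phi f=fe$. If $\phi$ is nilpotent with $\phi^N=0$, then the identity $\phi^N f=fe^N=fe$ (valid because $e$ is idempotent) combined with the injectivity of $f$ forces $e=0$, contradicting $X_1\neq 0$. If instead $\phi$ is an automorphism, then $\phi$ preserves the subspace $f(X)\subseteq Y$, since $\phi f=fe$ has image contained in $f(X)$; it therefore restricts to an injective, hence bijective, self-map of the finite-dimensional $f(X)$, and transporting this across the isomorphism $f:X\to f(X)$ shows that $e$ itself is an automorphism of $X$, so $e=1$ by idempotence, contradicting $X_2\neq 0$. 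Part $(b)$ is entirely dual: starting from an idempotent $e$ on $Z$ lifted to $\phi$ on $Y$, the nilpotent case gives $e^N g=eg=0$ and hence $e=0$ by the surjectivity of $g$, while in the automorphism case $\phi$ preserves $\ker g=f(X)$ (since $g\phi y=egy=0$ whenever $gy=0$) and therefore descends to an automorphism of the quotient $Y/f(X)\cong Z$ which, by commutativity, coincides with $e$, again forcing $e=1$.

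For $(c)$, applying $\Hom_A(Y,-)$ to the short exact sequence and using the vanishing $\Hom_A(Y,X)=0=\Ext^{1}_A(Y,X)$ yields an isomorphism $g_*:\End_A(Y)\xrightarrow{\sim}\Hom_A(Y,Z)$. For each $\phi\in\End_A(Y)$ the hypothesis supplies some $\psi\in\End_A(Z)$ with $\psi g=g\phi$; this $\psi$ is unique because $g$ is an epimorphism, and the resulting assignment $\Psi:\End_A(Y)\to \End_A(Z)$ is readily checked to be a ring homomorphism. Its injectivity uses $\Hom_A(Y,X)=0$: if $g\phi=0$, then $\phi$ factors through the monomorphism $f$, forcing $\phi=0$. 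Its surjectivity follows by pulling any $\psi\in \End_A(Z)$ back along the isomorphism $g_*$ to produce $\phi\in\End_A(Y)$ with $g\phi=\psi g$. Hence $\End_A(Y)\cong\End_A(Z)$ as rings, and since $Z$ is indecomposable the latter is local, whence so is $\End_A(Y)$, showing that $Y$ is indecomposable.

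The main technical point I expect to require care is the automorphism case in $(a)$ and $(b)$: one must first verify that $\phi$ stabilizes the appropriate subobject $f(X)$ or kernel $\ker g$ before invoking the finite-dimensional injective-implies-bijective principle, in order to restrict or descend $\phi$ to an automorphism that is actually compatible with $e$; everything else is a clean application of locality of $\End_A(Y)$ or of the $\Hom$-$\Ext$ vanishing.
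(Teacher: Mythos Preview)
Your proof is correct. For parts $(a)$ and $(b)$ your idempotent-based contradiction argument is essentially the same idea as the paper's, which instead shows directly that an arbitrary $\varphi\in\End_A(X)$ (resp.\ $\psi\in\End_A(Z)$) is nilpotent or invertible; both arguments lift to $Y$ and exploit the locality of $\End_A(Y)$ in the same way, so the difference is purely cosmetic. For part $(c)$ your approach genuinely diverges from the paper's: you establish a ring isomorphism $\End_A(Y)\cong\End_A(Z)$ and transport locality, whereas the paper argues case by case, showing that if the lifted $\psi$ is nilpotent then $\phi$ is nilpotent (via $\Hom_A(Y,X)=0$), and if $\psi$ is invertible then $\Ext^1_A(Y,X)=0$ is used to lift $\psi^{-1}$ to some $\epsilon:Y\to Y$ and conclude $\epsilon\phi=\mathrm{id}_Y$ (again via $\Hom_A(Y,X)=0$). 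Your argument is cleaner and yields the stronger conclusion $\End_A(Y)\cong\End_A(Z)$; the paper's argument has the minor advantage of not needing to verify separately that the lifting $\phi\mapsto\psi$ is a ring homomorphism or that it is surjective.
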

\begin{proof}
\begin{enumerate}[fullwidth,itemindent=2em]
\item Let $\varphi\in \End_A(X)$. By assumption there exists $\phi: Y\to Y$ such that $\phi  f=f \varphi$, whence $\phi^{i} f=f \varphi^{i}$ for any $i\geq 1$. If $\phi$ is nilpotent, then there exists $i\geq 1$ such that $f\varphi^{i}=0$, so $\varphi$ is nilpotent. If $\phi$ is invertible, due to the Snake Lemma $\varphi$ is injective, and hence invertible. We conclude that $X$ is indecomposable.
\item the proof is similar to $(a)$.
\item For any $\phi\in \End_A(Y)$, by assumption there exists $\psi\in \End_A(Z)$ such that $\psi g=g\phi$, so $\psi^{i}g=g\phi^{i}$ for any $i\geq 1$. If $\psi$ is nilpotent, then there exists $i\geq 1$ such that $g\phi^{i}=0$, hence there exists $h:Y\to X$ such that $\phi^{i}=fh$. However, $\Hom_A(Y,X)=0$, so $\phi$ is nilpotent. If $\psi$ is invertible, consider the following commutative diagram:
$$\xymatrix{0\ar[r]&X\ar[r]^{f}\ar[d]^{\alpha}&Y\ar[r]^{g}\ar[d]^{\phi}&Z\ar[r]\ar[d]^{\psi}&0\\
0\ar[r]&X\ar[r]^{f}\ar@{-->}[d]^{\beta}&Y\ar[r]^{g}\ar@{-->}[d]^{\epsilon}&Z\ar[r]\ar[d]^{\psi^{-1}}&0\\
0\ar[r]&X\ar[r]^{f}&Y\ar[r]^{g}&Z\ar[r]&0.&
}$$
Where the existence of $\epsilon$ is ensured by $\Ext^{1}_A(Y,X)=0$. Then $g=g\epsilon\phi$, i.e., $g(\epsilon\phi-id_Y)=0$, so there exists $h^{'}:Y\to X$ such that $\epsilon\phi-id_Y=fh^{'}$. Since $\Hom_A(Y,X)=0$, we obtain $\epsilon\phi=id_Y$. Thus, $\phi$ is injective and hence invertible. Consequently, $Y$ is indecomposable.
\end{enumerate}
\end{proof}

\begin{LemmaS}\cite[Lemma 4.5]{HR}\label{L6}
Suppose $A$ is hereditary, if $T_1$ and $T_2$ are indecomposable $A$-modules such that $\Ext^{1}_A(T_2,T_1)=0$, then any nonzero homorphism from $T_1$ to $T_2$ is either a monomorphism or an  epimorphism.
\end{LemmaS}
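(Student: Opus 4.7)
The plan is to argue by contradiction. Assume $f\colon T_1 \to T_2$ is a nonzero morphism that is neither a monomorphism nor an epimorphism, and write $K = \ker f$, $I = \im f$, $C = \coker f$, all nonzero. This yields the two short exact sequences
$$\alpha\colon 0 \to K \xrightarrow{\iota_K} T_1 \xrightarrow{p} I \to 0 \quad \text{and} \quad \beta\colon 0 \to I \xrightarrow{\iota} T_2 \xrightarrow{q} C \to 0,$$
both non-split by indecomposability of $T_1$ and $T_2$. My strategy is to construct an endomorphism $\phi$ of $T_1$ that restricts to the identity on $K$; locality of $\End_A(T_1)$ will then force $\phi \in \Aut_A(T_1)$, and $\phi$ will descend to an automorphism $\bar\phi$ of $I = T_1/K$ that coincides with $g\iota$ for a morphism $g\colon T_2 \to I$ constructed below, so that $\bar\phi^{-1}g$ becomes a retraction of $\iota$ splitting $\beta$ — the desired contradiction.

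The construction of $\phi$ proceeds by two cohomological liftings, each using one of the hypotheses. First, applying $\Hom_A(-, K)$ to $\beta$ and using hereditariness in the form $\Ext^2_A(C, K) = 0$, the pullback $\iota^*\colon \Ext^1_A(T_2, K) \to \Ext^1_A(I, K)$ is surjective, so I pick $\tilde\alpha \in \Ext^1_A(T_2, K)$ with $\iota^*\tilde\alpha = \alpha$. Second, applying $\Hom_A(T_2, -)$ to $\alpha$ and invoking $\Ext^1_A(T_2, T_1) = 0$, the connecting map $\partial\colon \Hom_A(T_2, I) \to \Ext^1_A(T_2, K)$ is surjective, so $\tilde\alpha = \partial(g) = g^*\alpha$ for some $g\colon T_2 \to I$. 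Combining, $(g\iota)^*\alpha = \iota^* g^* \alpha = \alpha$, whence $(g\iota - \id_I)^*\alpha = 0$ by bilinearity of the Yoneda product. The long exact sequence of $\Hom_A(I, -)$ applied to $\alpha$ then produces $\tilde\mu\colon I \to T_1$ with $p\tilde\mu = g\iota - \id_I$.

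Set $\phi := \id_{T_1} + \tilde\mu p$. The relations $p\iota_K = 0$ and $p\tilde\mu = g\iota - \id_I$ imply $\phi \iota_K = \iota_K$ and $p\phi = g\iota p = gf$. Since $\phi$ acts as the identity on the nonzero submodule $K$, no power $\phi^n$ annihilates $\iota_K$, so $\phi$ is not nilpotent and is therefore invertible by locality of $\End_A(T_1)$. Hence $\phi$ descends to $\bar\phi \in \Aut_A(I)$, and since $p\phi = (g\iota) p$ while $p$ is surjective, $\bar\phi = g\iota$. Therefore $g\iota$ is invertible, and $\bar\phi^{-1} g\colon T_2 \to I$ retracts $\iota$, contradicting the non-splittability of $\beta$. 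The chief obstacle is spotting the two-step cohomological lifting that yields $g\iota - \id_I = p\tilde\mu$; once in hand, the formula $\phi = \id + \tilde\mu p$ automatically satisfies the needed properties and the contradiction closes via Fitting's dichotomy.
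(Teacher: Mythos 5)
The paper does not prove this lemma; it simply cites \cite[Lemma 4.5]{HR}, so there is no in-paper proof to compare against. Your argument is correct: the vanishing of $\Ext^2_A(C,K)$ (hereditariness) makes $\iota^*\colon\Ext^1_A(T_2,K)\to\Ext^1_A(I,K)$ surjective, the hypothesis $\Ext^1_A(T_2,T_1)=0$ makes the connecting map $\Hom_A(T_2,I)\to\Ext^1_A(T_2,K)$ surjective, and combining these you correctly produce $g\colon T_2\to I$ with $(g\iota)^*\alpha=\alpha$, then lift $g\iota-\id_I$ through $p$ to get $\phi=\id_{T_1}+\tilde\mu p$ restricting to the identity on $K$; locality of $\End_A(T_1)$ forces $\phi$ invertible, the induced $\bar\phi=g\iota$ is then an automorphism of $I$, and $\bar\phi^{-1}g$ splits $\beta$, contradicting the indecomposability of $T_2$. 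Each step checks out, including the bilinearity used to pass from $(g\iota)^*\alpha=\alpha$ to $(g\iota-\id_I)^*\alpha=0$ and the descent of $\phi$ to $\bar\phi$ (which requires $\phi(K)=K$, guaranteed since $\phi|_K=\id_K$). This is essentially the classical Happel--Ringel argument recast cleanly in terms of connecting homomorphisms and Yoneda pullbacks, with Fitting's dichotomy applied to $\End_A(T_1)$ rather than to $\End_A(I)$ — a necessary choice, since $I$ need not be indecomposable.
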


\section{Construction of tilting modules of tilted algebras}
An algebra $\Lambda^{'}$ is called \textbf{tilted algebra} of type $\Lambda$,  if $\Lambda^{'}=\End_\Lambda(T)$ with $T_\Lambda$ being tilting over a hereditary algebra $\Lambda$. We mentioned in Section 1 that for hereditary algebras, being perfect exceptional is the same as being tilting, we will show firstly this is still true for tilted algebras. Let $Z=\bigoplus^{s}_{i=1}Z_i\in \text {mod }A$ be a basic classic partial tilting module with $s$ pairwise non-isomorphic indecomposable summands, the \textbf{right perpendicular subcategory} $\mathsf{Z^{perp}}$ is defined to be the full subcategory $\mathsf{Z^{perp}}=\{M\in \text{mod } A|\Hom_A(Z,M)=\Ext^{1}_A(Z,M)=0\}$. Then according to \cite{GL}  or \cite[Theorem 2.1]{HaUn1}, we have the following Theorem. Recall that $n$ is the number of isomorphism classes of simple $A$-modules.

\begin{TheoremS}\label{perpendicular}
The perpendicular category $\mathsf{Z^{perp}}$ contains a projective generator $Q$, such that $\mathsf{Z^{perp}}\cong \text{mod }\Gamma$, where $\Gamma=\End_A(Q)$ is a hereditary finite dimensional $k$-algebra, and  the number of non-isomorphic simple $\Gamma$-modules equals $n-s$. Moreover, $\Ext^{i}_A(M,N)\cong \Ext^{i}_\Gamma(M,N)$ for all $M,N\in \mathsf{Z^{perp}}$ and $i\geq 0$.
\end{TheoremS}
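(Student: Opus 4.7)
The plan is to construct an explicit projective generator $Q$ for $\mathsf{Z^{perp}}$ as a torsion-free quotient of the Bongartz complement of $Z$, and then use the resulting Bongartz exact sequence to establish the equivalence $\mathsf{Z^{perp}} \simeq \text{mod }\Gamma$ and the preservation of higher $\Ext$ groups.

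First I would apply Bongartz's lemma to extend $Z$ to a basic classic tilting $A$-module $T = Z \oplus M$. Explicitly, if $\xi_1, \ldots, \xi_r$ generate $\Ext^{1}_A(Z, A)$ as a right $\End_A(Z)$-module, then the induced universal extension
\[
0 \to A \to M \to Z^{r} \to 0
\]
defines $M$, which has $n - s$ pairwise non-isomorphic indecomposable summands $M_1, \ldots, M_{n-s}$. By construction $\Ext^{1}_A(Z, M) = 0$, though $\Hom_A(Z, M)$ is in general non-zero.

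To remove the $\Hom$ contribution, for each indecomposable summand $M_j$ I consider the trace sequence
\[
0 \to \Tr_{M_j} Z \to M_j \to Q_j \to 0,
\]
so that $\Hom_A(Z, Q_j) = 0$ by construction. Since $\Tr_{M_j} Z \in \mathsf{Gen}\,Z$ and $A$ is hereditary (the implicit context of this section), $\Ext^{1}_A(Z, \Tr_{M_j} Z) = 0$; combined with $\Ext^{1}_A(Z, M_j) = 0$, the long exact sequence forces $\Ext^{1}_A(Z, Q_j) = 0$. Hence each non-zero $Q_j$ lies in $\mathsf{Z^{perp}}$, and a short case analysis shows that these $Q_j$'s are pairwise non-isomorphic indecomposables totalling $n-s$ in number. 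I then set $Q := \bigoplus_j Q_j$.

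To finish, I would prove that $Q$ is a projective generator of $\mathsf{Z^{perp}}$ and that $\Hom_A(Q, -) \colon \mathsf{Z^{perp}} \to \text{mod }\Gamma$ is an equivalence. For any $N \in \mathsf{Z^{perp}}$, applying $\Hom_A(-, N)$ to the Bongartz sequence together with $\Hom_A(Z, N) = \Ext^{1}_A(Z, N) = 0$ yields $\Hom_A(M, N) \cong N$ functorially and $\Ext^{1}_A(M, N) = 0$. Transferring these identifications along the trace quotients $M_j \twoheadrightarrow Q_j$, using that $\mathsf{Gen}\,Z$ is left-orthogonal to $\mathsf{Z^{perp}}$, gives $\Hom_A(Q, N) \cong N$ and $\Ext^{1}_A(Q, N) = 0$, establishing projectivity and generation simultaneously. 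The higher $\Ext$ isomorphisms then follow by dimension-shifting along projective resolutions in $\mathsf{Z^{perp}}$ built out of $Q$, and the heredity of $\Gamma$ is forced by the vanishing $\Ext^{2}_A = 0$. The main obstacle is the joint verification that $Q$ is a projective generator and that $\Hom_A(Q, -)$ induces a genuine equivalence preserving all $\Ext^{i}$, rather than merely a fully faithful functor; this hinges on a careful interplay between the Bongartz sequence and the torsion-theoretic decomposition cut out by $Z$.
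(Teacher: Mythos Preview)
The paper does not prove this theorem; it is quoted from the literature, with attribution to Geigle--Lenzing \cite{GL} and Happel--Unger \cite[Theorem 2.1]{HaUn1}. Your construction --- the Bongartz complement $M$ of $Z$ followed by the trace quotient $Q=M/\Tr_M Z$ --- is exactly the standard one appearing in those references, and the verification that $\Hom_A(Q,N)\cong\Hom_A(M,N)\cong\Hom_A(A,N)\cong N$ and $\Ext^1_A(Q,N)=0$ for every $N\in\mathsf{Z^{perp}}$ (hence that $Q$ is a projective generator of the exact abelian subcategory $\mathsf{Z^{perp}}$) goes through as you describe. The dimension-shift for the higher $\Ext$-groups and the heredity of $\Gamma$ then follow.

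The one soft spot is the sentence ``a short case analysis shows that these $Q_j$'s are pairwise non-isomorphic indecomposables totalling $n-s$ in number.'' Nothing you have written so far rules out $M_j\in\mathsf{Gen}\,Z$ (whence $Q_j=0$) or $Q_i\cong Q_j$ for $i\neq j$. The paper itself performs precisely this kind of analysis in Lemma~\ref{basic} (in a closely related setting), and it takes the better part of a page of Happel--Ringel style arguments using Lemma~\ref{L5} and Lemma~\ref{L6}; ``short'' is optimistic. A cleaner route is to bypass the direct summand count and read off the number of simple $\Gamma$-modules from $K_0$: the exact embedding $\mathsf{Z^{perp}}\hookrightarrow\text{mod}\,A$ lands $K_0(\mathsf{Z^{perp}})$ inside the right Euler-orthogonal of the linearly independent classes $[Z_1],\ldots,[Z_s]$ in $K_0(\text{mod}\,A)\cong\mathbb{Z}^n$, which has rank $n-s$ since the Euler form is non-degenerate over a hereditary algebra; conversely the Bongartz sequence together with the trace sequences shows that the $[Q_j]$ and the $[Z_i]$ jointly span $K_0(\text{mod}\,A)$, forcing equality.
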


The relationship between exceptional modules and tilting modules is stated as the following Lemma.

\begin{LemmaS}\cite[Proposition 3.6]{Ha2}\label{L7}
The following statements are equivalent for an exceptional $A$-module $N$:
\begin{enumerate}
\item $N$ is tilting;
\item $\mathsf{N^{\perp}}\subset \mathsf{fac}N$.
\end{enumerate}
\end{LemmaS}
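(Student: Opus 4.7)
My plan is to prove the two implications separately. For $(a)\Rightarrow(b)$ I would exploit the coresolution of $A$ by $\mathsf{add}\,N$ furnished by the tilting hypothesis; for the converse $(b)\Rightarrow(a)$ I would try to construct such a coresolution iteratively out of the hypothesis $\mathsf{N^{\perp}}\subset\mathsf{fac}\,N$.

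For $(a)\Rightarrow(b)$, I would fix $M\in\mathsf{N^{\perp}}$ together with the coresolution $0\to A\to T_0\to T_1\to\cdots\to T_r\to 0$ (each $T_i\in\mathsf{add}\,N$) and split it into short exact sequences $0\to K_i\to T_i\to K_{i+1}\to 0$ with $K_0=A$ and $K_r=T_r$. Since $T_i\in\mathsf{add}\,N$ and $M\in\mathsf{N^{\perp}}$, the vanishing $\Ext^{j}_A(T_i,M)=0$ for all $j\geq 1$ propagates through the long exact sequences obtained by applying $\Hom_A(-,M)$, yielding iterated isomorphisms $\Ext^{1}_A(K_1,M)\cong\Ext^{2}_A(K_2,M)\cong\cdots\cong\Ext^{r}_A(K_r,M)=\Ext^{r}_A(T_r,M)=0$. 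Feeding this vanishing into the first short exact sequence $0\to A\to T_0\to K_1\to 0$ shows that $\Hom_A(T_0,M)\to\Hom_A(A,M)\cong M$ is surjective; evaluating on a finite spanning set of $\Hom_A(T_0,M)$ then produces an epimorphism $T_0^{k}\twoheadrightarrow M$ with $T_0^{k}\in\mathsf{add}\,N$, so $M\in\mathsf{fac}\,N$ as required.

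For the converse, I would first observe that the injective cogenerator $D(A)$ lies in $\mathsf{N^{\perp}}$ and hence in $\mathsf{fac}\,N$ by hypothesis; since any element of $\mathrm{ann}_A(N)$ annihilates every quotient of a direct sum of copies of $N$ while $D(A)$ is faithful as a cogenerator, this forces $\mathrm{ann}_A(N)=0$, so $N$ is faithful. The universal left $\mathsf{add}\,N$-approximation $\eta: A\to T_0:=\bigoplus_i N_i^{\dim_k\Hom_A(A,N_i)}$ (with $N_i$ ranging over the indecomposable summands of $N$) is therefore injective. Setting $C_0=\coker\eta$ and applying $\Hom_A(-,N)$ to $0\to A\to T_0\to C_0\to 0$, the surjectivity of $\Hom_A(T_0,N)\to\Hom_A(A,N)$ coming from the universal property of $\eta$, combined with $\Ext^{j}_A(A,N)=0$ (as $A$ is projective) and $\Ext^{j}_A(T_0,N)=0$ (as $N$ is exceptional), would force $\Ext^{j}_A(C_0,N)=0$ for every $j\geq 1$. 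I would then iterate this construction on $C_0$ and splice the resulting short exact sequences to produce the desired coresolution.

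The hard part will be the termination and compatibility of this iteration: I must ensure each successive cokernel $C_i$ remains cogenerated by $N$ (so that the next left approximation is again injective) and that after at most $r=\mathrm{pd}_A N$ steps the final cokernel actually lies in $\mathsf{add}\,N$. This is precisely where the finiteness of $\mathrm{pd}_A N$ combined with $\mathsf{N^{\perp}}\subset\mathsf{fac}\,N$ must be leveraged in tandem—for instance by tracking the iteration inside the extension-closed subcategory $\{C\in\text{mod}\,A:\Ext^{i}_A(C,N)=0\text{ for all }i\geq 1\}$ and inducting on a suitable complexity invariant measuring the distance of $C_i$ from $\mathsf{add}\,N$.
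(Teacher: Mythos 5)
The paper does not supply its own proof of this lemma; it is cited verbatim from Happel's ``Selforthogonal modules'' (your reference \cite{Ha2}). So the only thing to assess is whether your argument is complete.

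Your proof of $(a)\Rightarrow(b)$ is correct: splitting the coresolution into short exact sequences, dimension-shifting with the vanishing $\Ext^{j}_A(T_i,M)=0$ to obtain $\Ext^{1}_A(K_1,M)=0$, and then reading off surjectivity of $\Hom_A(T_0,M)\to\Hom_A(A,M)\cong M$ to land $M$ in $\mathsf{fac}\,N$, is a clean and standard argument. The initial observations for $(b)\Rightarrow(a)$ are also right: $D(A)\in\mathsf{N^{\perp}}\subset\mathsf{fac}\,N$ forces $N$ to be faithful, hence $\mathrm{Rej}_A(N)=\mathrm{ann}_A(N)=0$ and the universal left $\mathsf{add}\,N$-approximation $\eta:A\to T_0$ is injective, and the universal property plus projectivity of $A$ and self-orthogonality of $N$ give $\Ext^{j}_A(C_0,N)=0$ for all $j\geq 1$.

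However, the remainder of $(b)\Rightarrow(a)$ has a genuine gap, and you have correctly put your finger on exactly where. The two things you need --- that each cokernel $C_i$ stays cogenerated by $N$ so the next approximation is injective, and that the iteration terminates with $C_i\in\mathsf{add}\,N$ --- do not follow from anything you have established. More seriously, the invariant you propose to track, namely membership in $\{C:\Ext^{i}_A(C,N)=0\ \text{for all}\ i\geq 1\}={}^{\perp}N$, is the \emph{left} perpendicular category, whereas the hypothesis constrains the \emph{right} perpendicular $\mathsf{N^{\perp}}=\{M:\Ext^{i}_A(N,M)=0\}$. There is no bridge in your argument that lets hypothesis (b) say anything about $C_0$, which a priori need not lie in $\mathsf{N^{\perp}}$ at all (and note that $A$ itself is generally not in $\mathsf{N^{\perp}}$ when $\mathrm{pd}\,N\geq 1$, so one cannot simply ``stay inside $\mathsf{N^{\perp}}$''). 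The place where hypothesis (b) actually gives traction is on the \emph{right} $\mathsf{add}\,N$-approximation side: for $M\in\mathsf{N^{\perp}}$, a surjective right $\mathsf{add}\,N$-approximation $N_0\twoheadrightarrow M$ has kernel again in $\mathsf{N^{\perp}}$, yielding a (possibly infinite) $\mathsf{add}\,N$-resolution of, say, $D(A)$; the hard content of Happel's proposition is in converting that into the finite $\mathsf{add}\,N$-\emph{coresolution} of $A$ that the definition of tilting demands, and this step is precisely what your sketch leaves open. As it stands, $(b)\Rightarrow(a)$ is a plausible plan with an acknowledged but unresolved hole, not a proof.
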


The following result also follows from the proof of the main result of \cite{HRS}. We provide a shorter proof that uses the techniques  presented in the foregoing section.

\begin{PropositionS}\label{perfecttilting}
Let $\Lambda$ be hereditary , $T_0$ be a tilting $\Lambda$-module, $B_0=\End_\Lambda(T_0)$. Then if $T$ is a perfect exceptional $B_0$-module, $T$ is tilting.
\end{PropositionS}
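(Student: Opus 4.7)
Since $T_0$ is a tilting module over the hereditary algebra $\Lambda$, the algebra $B_0=\End_\Lambda(T_0)$ has global dimension at most two, so $\mathrm{pd}_{B_0}(T)\le 2$ automatically (this handles condition (a) of the tilting definition); self-orthogonality is part of $T$ being exceptional (condition (b)). The remaining condition (c), the existence of an $\mathsf{add}\,T$-coresolution of $B_0$, is by Lemma \ref{L7} equivalent to $T^{\perp}\subset \mathsf{fac}\,T$, and this is what I plan to establish.

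The first step is to decompose $T$ using the torsion pair $(\mathcal{X}(T_0),\mathcal{Y}(T_0))$ in mod $B_0$. Since $\Lambda$ is hereditary, every indecomposable object of $D^b(\Lambda)\cong D^b(B_0)$ is a shift of an indecomposable $\Lambda$-module, so every indecomposable $B_0$-module lies in $\mathcal{X}(T_0)\cup\mathcal{Y}(T_0)$. Write $T=T_Y\oplus T_X$ with $T_Y\in\mathsf{add}\,\mathcal{Y}(T_0)$ and $T_X\in\mathsf{add}\,\mathcal{X}(T_0)$; by Theorem \ref{Th0} there exist unique $Y\in\mathcal{T}(T_0)$ and $X\in\mathcal{F}(T_0)$ with $T_Y=\Hom_\Lambda(T_0,Y)$ and $T_X=\Ext^1_\Lambda(T_0,X)$, and $|\mathrm{ind}\,Y|+|\mathrm{ind}\,X|=n$. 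Lemmas \ref{L1}, \ref{L2} and \ref{L4} translate the vanishings $\Ext^i_{B_0}(T,T)=0$ ($i\ge 1$) into $\Ext^1_\Lambda(Y,Y)=0$, $\Ext^1_\Lambda(X,X)=0$, $\Hom_\Lambda(X,Y)=0$, and $\Ext^1_\Lambda(X,Y)=0$ (the latter using $\Ext^2_{B_0}(T_X,T_Y)=0$). In particular $Y$ lies in the perpendicular category $X^{\mathsf{perp}}\subset$ mod $\Lambda$, which by Theorem \ref{perpendicular} is equivalent to mod $\Gamma$ for a hereditary algebra $\Gamma$ with $|\mathrm{ind}\,Y|$ simples; being a perfect exceptional object there, $Y$ is a tilting $\Gamma$-module.

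To verify $T^{\perp}\subset \mathsf{fac}\,T$, take $M\in T^{\perp}$ and consider its canonical torsion-pair decomposition $0\to M_X\to M\to M_Y\to 0$ with $M_X=\Ext^1_\Lambda(T_0,\bar X)$ and $M_Y=\Hom_\Lambda(T_0,\bar Y)$. Translating the conditions $\Ext^i_{B_0}(T,M)=0$ again via Lemmas \ref{L1}--\ref{L4} places $\bar Y$ and $\bar X$ into perpendicular subcategories of mod $\Lambda$ inside which $Y$ and $X$ act as tilting modules; Lemma \ref{L7} in those (hereditary) categories gives $\bar Y\in\mathsf{fac}\,Y$ and $\bar X\in\mathsf{fac}\,X$ in mod $\Lambda$, and propagating these surjections through $\Hom_\Lambda(T_0,-)$ and $\Ext^1_\Lambda(T_0,-)$ yields $M_Y\in\mathsf{fac}\,T_Y$ and $M_X\in\mathsf{fac}\,T_X$. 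The main obstacle I expect is the final assembly: because $\mathsf{fac}\,T$ is not in general closed under extensions, combining the two surjections along the canonical sequence requires a pullback argument (together with Lemma \ref{L5} to control indecomposability) that exploits the specific Ext-vanishings derived in step two to produce a genuine surjection from an element of $\mathsf{add}\,T$ onto $M$.
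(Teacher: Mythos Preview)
Your opening matches the paper: decompose $T=T_Y\oplus T_X$, pull back to $Y\in\mathcal{T}(T_0)$ and $X\in\mathcal{F}(T_0)$, and use Theorem~\ref{perpendicular} to see that $Y$ is tilting in $X^{\mathsf{perp}}\simeq\modd\Gamma$. Your treatment of the $\mathcal{Y}(T_0)$-part of an $M\in T^{\perp}$ is also essentially the paper's Case~1.

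The gap is in the $\mathcal{X}(T_0)$-part. Your assertion that the vanishings $\Ext^{i}_{B_0}(T,M)=0$ place $\bar X$ ``into a perpendicular subcategory in which $X$ acts as a tilting module'' is not justified, and the conclusion $\bar X\in\mathsf{fac}\,X$ is in general false. The only relevant vanishing you get from $M\in T^{\perp}$ on this side is $\Ext^{1}_\Lambda(X,\bar X)=0$ (Lemma~\ref{L2}); Lemma~\ref{L3} translates $\Ext^{1}_{B_0}(T_Y,M_X)=0$ into $\Ext^{2}_\Lambda(Y,\bar X)=0$, which is automatic since $\Lambda$ is hereditary, so you learn nothing about $\Ext^{1}_\Lambda(Y,\bar X)$, and there is no control over $\Hom_\Lambda(\bar X,Y)$ or $\Ext^{1}_\Lambda(\bar X,Y)$ either. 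Concretely, if $\Hom_\Lambda(X,\bar X)=0$ but $\bar X\neq 0$ (which is perfectly compatible with $M_X\in T^{\perp}$), then $\bar X\notin\mathsf{fac}\,X$. The paper confronts exactly this: when $\Ext^{1}_\Lambda(Y,L)\neq 0$ (their $L$ is your $\bar X$) it builds a Bongartz-type universal extension $0\to L\to F\to Y^{s}\to 0$, passes to the $(\mathsf{Gen}\,X,\mathcal{F}(X))$-canonical sequences of $L$ and $F$, and shows that $\Ext^{1}_\Lambda(T_0,L)$ is a quotient of $\Hom_\Lambda(T_0,Y^{s})\oplus\Ext^{1}_\Lambda(T_0,L_1)$ with $L_1\in\mathsf{Gen}\,X$; so $M_X$ lands in $\mathsf{fac}\,T$ only through the \emph{combined} action of $T_Y$ and $T_X$, not through $T_X$ alone.

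Finally, the ``main obstacle'' you flag is not one: since $(\mathcal{X}(T_0),\mathcal{Y}(T_0))$ splits, $M\cong M_X\oplus M_Y$, and $\mathsf{fac}\,T$ is closed under finite direct sums, so no extension/pullback argument is needed once you know $M_X,M_Y\in\mathsf{fac}\,T$ separately. The real work is the $M_X$ case above.
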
 
\begin{proof}
According to \cite[1.7]{KB}, the torsion pair $(\mathcal{X}(T_0),\mathcal{Y}(T_0))$ is splitting in mod $B_0$. So without loss of generality, we assume $T=Y^{'}\oplus X^{'}$ such that $Y^{'}\in\mathcal{Y}(T_0)$, $X^{'}\in\mathcal{X  }(T_0)$. Then the Tilting Theorem \ref{Th0} provides $Y\in\mathcal{T}(T_0)$ and $X\in\mathcal{F}(T_0)$ such that $Y^{'}\cong \Hom_\Lambda(T_0,Y)$, $X^{'}\cong \Ext^{1}_\Lambda(T_0,X)$.
In view of Theorem \ref{perpendicular}, there is $\Gamma$ such that $\text{mod }\Gamma=\mathsf{X^{perp}}$, we claim $Y$ is a tilting $\Gamma$-module. By Lemma \ref{L4}, $\Ext^{i}_\Lambda(X,Y)\cong \Ext^{i+1}_{B_0}(\Ext^{1}_\Lambda(T_0,X),\Hom_\Lambda$ $(T_0,Y))$ for $i=0,1$. Since $T$ is perfect exceptional, $\Ext^{i}_\Lambda(X,Y)$ $=0$ (\ding{172}) for $i=0,1$, i.e., $Y$ in $\mathsf{X^{perp}}$. Then Theorem \ref{perpendicular} implies $Y$ is perfect exceptional and hence a  tilting $\Gamma$-module.

According to Lemma \ref{L7}, to show $T$ is tilting, it suffices to verify  $Q\in \mathsf{fac} T$ for every indecomposable $B_0$-module $Q$ which belongs to $\mathsf{T^{\perp}}$. 
Without loss of generality, let $Q\in \mathsf{T^{\perp}}$ be an indecomposable $B_0$-module. Since $(\mathcal{X}(T_0),\mathcal{Y}(T_0))$ is splitting in mod $B_0$, $Q$ is either in $\mathcal{X}(T_0)$ or in $\mathcal{Y}(T_0)$.

First, we assume $Q\in\mathcal{Y}(T_0)$. According to the Tilting Theorem \ref{Th0}, there exists $L\in\mathcal{T}(T_0)$ such that $Q\cong \Hom_\Lambda(T_0,L)$. By Lemma \ref{L1}, $\Ext^{1}_\Lambda(Y,L)\cong \Ext^{1}_{B_0}(\Hom_\Lambda(T_0,Y),\Hom_\Lambda(T_0,L))$. As $Q\in \mathsf{T^{\perp}}$, we thus have $\Ext^{1}_\Lambda(Y,L)=0$. Again, $Q\in \mathsf{T^{\perp}}$ implies $\Ext^{i}_{B_0}(\Ext^{1}_\Lambda(T_0,X),\Hom_\Lambda(T_0,L))$ $=0$ for $i=1,2$. Then Lemma \ref{L4} shows that $L\in \mathsf{X^{perp}}=\text{mod }\Gamma$. Since $Y$ is a tilting $\Gamma$-module, Theorem \ref{perpendicular} implies $\Ext^{1}_\Gamma(Y,L)\cong \Ext^{1}_\Lambda(Y,L)=0$. By the same token, Lemma \ref{L0} provides an exact sequence:
$$\xymatrix@1{0\ar[r]&Z\ar[r]&\overline{Y}\ar[r]& L\ar[r]& 0&(*)}$$
in mod $\Gamma$ and also in mod $\Lambda$ such that $\overline{Y}\in \mathsf{add} Y$ and $Z\in\mathcal{T}(Y)=\mathsf{Gen} Y$ (in mod $\Gamma$ and hence in mod $\Lambda$). Since $Y\in\mathcal{T}(T_0)$ and the torsion class is closed under taking factor modules,  $Z\in\mathcal{T}(T_0)$.  Applying the functor $\Hom_\Lambda(T_0,-)$ to $(*)$ we thus obtain $Q\cong \Hom_\Lambda(T_0,L)\in \mathsf{fac}\Hom_\Lambda(T_0,Y)\subset \mathsf{fac} T$.

Next, we assume $Q\in\mathcal{X}(T_0)$. According to Theorem \ref{Th0}, there exists $L\in\mathcal{F}(T_0)$ such that $Q\cong \Ext^{1}_\Lambda(T_0,L)$. By Lemma \ref{L2}, $ \Ext^{1}_\Lambda(X,L)\cong \Ext^{1}_{B_0}(\Ext^{1}_\Lambda(T_0,X),\Ext^{1}_\Lambda(T_0,L))$, so $Q\in \mathsf{T^{\perp}}$ implies $\Ext^{1}_\Lambda(X,L)=0$ (\ding{173}). Since $X$ is classic partial tilting, \cite[\uppercase\expandafter{\romannumeral 6}. Lemma 2.3]{ASS} implies $(\mathsf{Gen} X,\mathcal{F}(X))$ is a torsion pair in mod $\Lambda$. Consider the canonical sequence of $L$ associated to the torsion pair $(\mathsf{Gen} X,\mathcal{F}(X))$: 
$$\xymatrix@1{0\ar[r] &L^{'}\ar[r]& L\ar[r] &L^{''}\ar[r]& 0.&(**)}$$
Identity \ding{173} implies $\mathsf{Ext}^{1}_\Lambda(X,L^{''})=0$, and since $L^{''}\in\mathcal{F}(X)$,  $L^{''}\in \mathsf{{X}^{perp}}$.

If $L\in\mathcal{T}(Y)$, then $L^{''}\in\mathcal{T}(Y)$, so that Theorem \ref{perpendicular} yields $\Ext^{1}_\Gamma(Y,L^{''})\cong \Ext^{1}_\Lambda(Y,L^{''})=0$. Since $Y$ is a tilting $\Gamma$-module, Lemma \ref{L0} implies that $L^{''}$ is generated by $Y$ (in mod $\Gamma$ and hence in mod $\Lambda$), hence in $\mathcal{T}(T_0)$, i.e., $\Ext^{1}_\Lambda(T_0,L^{''})=0$. Since $L^{'}\in \mathsf{Gen} X$, applying the functor $\Hom_\Lambda(T_0,-)$ to $(**)$, we obtain $Q\cong \Ext^{1}_\Lambda(T_0,L)\in \mathsf{fac} \Ext^{1}_\Lambda(T_0,L^{'})\subset \mathsf{fac} \Ext^{1}_\Lambda(T_0,X)\subset \mathsf{fac} T$.

If $L\notin\mathcal{T}(Y)$, we have $\dim_k\Ext^{1}_\Lambda(Y,L)=:s\not=0$. Consider the following commutative diagram with exact rows and columns:
$$\xymatrix{
&0\ar[d]&0\ar[d]\\
0\ar[r]&L_1\ar[r]^{i}\ar[d]_{h}&F_1\ar[d]^{h^{'}}\ar@{-->}[dl]^{u}\\
0\ar[r]&L\ar[r]_{f}\ar[d]_{k}&F\ar[r]_{g}\ar[d]^{l}&Y^{s}\ar[r]&0\\
0\ar[r]&L_2\ar[r]\ar[d]&F_2\ar[d]\\
&0&0,
}
$$
where the construction of the middle row is similar to the proof of the Bongartz's Lemma (\cite[Lemma 2.1]{KB}): The connecting morphism $\delta:\Hom_\Lambda(Y,Y^{s})\to \Ext^{1}_\Lambda(Y,L)$ is surjective.
The two columns of short exact sequences are the canonical sequences of $L$ and $F$ associated to the torsion pair $(\mathsf{Gen} X,\mathcal{F}(X))$, respectively.  The existence of $i$ is due to the universal properties of canonical sequences of torsion pairs and is obviously a monomorphism. In view of $F_1\in \mathsf{Gen} X$ and \ding{172}, we have $\Hom_\Lambda(F_1,Y^{s})=0$, whence $gh^{'}=0$. So there exists $u:F_1\to L$ such that $h^{'}=fu$. Since $h^{'}$ is a monomorphism, $u$ is a monomorphism. By definition, $L_1=\sum Im\beta\{\beta:X\to L\}$, and $F_1\in \mathsf{Gen} X$ yields $\dim_kF_1\leq \dim_kL_1$. Hence $i$ is an isomorphism.
Owing to \ding{172} and \ding{173}, an application of functor $\Hom_\Lambda(X,-)$ to the middle row gives $\Ext^{1}_\Lambda(X,F)=0$ and hence $\Ext^{1}_\Lambda(X,F_2)=0$. By the construction of the second column, $F_2\in\mathcal{F}(X)$, i.e.,   $\Hom_\Lambda(X, F_2)=0$, so $F_2\in \mathsf{X^{perp}}$. By the construction of the middle row , we have $\Ext^{1}_\Lambda(Y,F)=0$, so $\Ext^{1}_\Lambda(Y,F_2)=0$. Hence  Theorem \ref{perpendicular} yields $\Ext^{1}_\Gamma(Y,F_2)\cong \Ext^{1}_\Lambda(Y,F_2)=0$. Since $Y$ is tilting in $\mathsf{X^{perp}}$, Lemma \ref{L0} implies $F_2$ is generated by $Y$ (in mod $\Gamma$ and hence in mod $\Lambda$), hence in $\mathcal{T}(T_0)$. Consequently, $\Ext^{1}_\Lambda(T_0,h^{'})$ is an epimorphism. Since $fh=h^{'}i$, $\Ext^{1}_\Lambda(T_0,fh)$ is also an epimorphism. Applying the functor $\Hom_\Lambda(T_0,-)$ to the first column and the middle row  we obtain the following diagram with exact rows:
$$\xymatrix{ \Hom_\Lambda(T_0,F)\ar[r]& \Hom_\Lambda(T_0,Y^{s})\ar[r]^{\alpha}& \Ext^{1}_\Lambda(T_0,L)\ar[r]^{\Ext^{1}_\Lambda(T_0,f)}&\Ext^{1}_\Lambda(T_0,F)\\
 \Hom_\Lambda(T_0,L_2)\ar[r]& \Ext^{1}_\Lambda( T_0,L_1)\ar[r]^{\Ext^{1}_\Lambda(T_0,h)}& \Ext^{1}_\Lambda(T_0,L)\ar[r]\ar[u]^{id}& \Ext^{1}_\Lambda(T_0,L_2).}
$$
For any $\eta\in \Ext^{1}_\Lambda(T_0,L)$, there exists $\xi\in \Ext^{1}_\Lambda(T_0,L_1)$ such that $\Ext^{1}_\Lambda(T_0,f)(\eta)=\Ext^{1}_\Lambda(T_0,fh)$ $(\xi)=\Ext^{1}_\Lambda(T_0,f)(\Ext^{1}_\Lambda(T_0,h)(\xi))$, i.e., $\Ext^{1}_\Lambda(T_0,f)(\eta-\Ext^{1}_\Lambda(T_0,h)(\xi))=0$. So there exists $v\in \Hom_\Lambda(T_0,Y^{s})$ such that $\alpha (v)=\eta-\Ext_\Lambda^{1}(T_0,h)(\xi)$, i.e., $\eta=\alpha(v)+\Ext_\Lambda^{1}(T_0,h)(\xi)$. Consequently, the homomorphism $(\alpha, \Ext^{1}_\Lambda(T_0,h)):\Hom_\Lambda(T_0,Y^{s})\oplus \Ext^{1}_\Lambda(T_0,L_1)\to \Ext^{1}_\Lambda(T_0,L)$ is an epimorphism, i.e., $Q\cong \Ext^{1}_\Lambda(T_0,L)\in \mathsf{fac}(\Hom_\Lambda(T_0,Y)\oplus \Ext^{1}_\Lambda(T_0,L_1))$. But since $L_1\in \mathsf{Gen} X$, this entails $Q\in \mathsf{fac}T$. This completes the proof.
\end{proof}

With this Proposition in hand, we turn to the construction of perfect exceptional modules for tilted algebras. Let us recall the definition of the trace. Let $\mathcal{U}$ be a class of $A$-modules and $M$ be an $A$-module, the \textbf{trace} of $\mathcal{U}$ in $M$ is defined to be
$$Tr_M(\mathcal{U})=\sum\{Imh|h:U\to M\text{ for some }U\in\mathcal{U}\}.$$
In particular, when $\mathcal{U}=\{U\}$, we will write $Tr_MU$.

\textbf{General assumption}: From now on, we let $\Lambda$ be hereditary, $T_0$ be a tilting $\Lambda$-module, $B_0=\End_\Lambda(T_0)$. Let $T=Y_0\oplus X_0$ be a basic tilting $\Lambda$-module such that  $Y_0\in\mathcal{T}(T_0)$, $X_0\in\mathcal{F}(T_0)$.
We will refer to this set-up by saying that ($\Lambda$, $T_0$, $B_0$, $Y_0$, $X_0$) fulfills the general assumption.

Consider the following short exact sequence 
$$\xymatrix@1{0\ar[r]& Tr_{Y_0}X_0\ar[r]^-{l}& Y_0\ar[r]^-{p}& Y_0/Tr_{Y_0}X_0\ar[r]& 0&(a)}$$
such that $l$ is the embedding and $(p,Y_0/Tr_{Y_0}X_0)$ is the cokernel. Obviously, $Tr_{Y_0}X_0\in \mathsf{Gen} X_0$,  hence $\Ext^{1}_\Lambda(X_0,Tr_{Y_0}X_0)=0$. By the definition of the trace, $\Hom_\Lambda(X_0,l):\Hom_\Lambda(X_0,Tr_{Y_0}X_0)\to \Hom_\Lambda(X_0,Y_0)$ is surjective and  $\Hom_\Lambda(X_0,Y_0/Tr_{Y_0}X_0)=0$ (\ding{174}).

\begin{LemmaS}\label{selforthogonal}
Suppose that ($\Lambda$, $T_0$, $B_0$, $Y_0$, $X_0$) fulfills the general assumption.  Then $\Ext^{1}_{B_0}(\Hom_\Lambda$ $(T_0,Y_0/Tr_{Y_0}X_0),\Hom_\Lambda(T_0,$ $Y_0/Tr_{Y_0}X_0))=0$.
\end{LemmaS}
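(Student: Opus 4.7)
The plan is to reduce, via Lemma \ref{L1}(b), the computation of $\Ext^1_{B_0}$ on the $B_0$-side to a computation of $\Ext^1_\Lambda$ on the $\Lambda$-side, and then exploit the short exact sequence $(a)$. Set $Y' := Y_0/Tr_{Y_0}X_0$ and $K := Tr_{Y_0}X_0$. Applying $\Hom_\Lambda(T_0,-)$ to $(a)$ and using $\Ext^1_\Lambda(T_0,Y_0)=0$ together with ${\rm pd}_\Lambda T_0 \le 1$ (as $\Lambda$ is hereditary) yields $\Ext^1_\Lambda(T_0,Y')=0$, i.e., $Y'\in\mathcal{T}(T_0)$. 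Hence Lemma \ref{L1}(b) reduces the task to showing $\Ext^1_\Lambda(Y',Y')=0$.

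To this end, I would apply $\Hom_\Lambda(-,Y')$ to $(a)$. Since $\Lambda$ is hereditary, the long exact sequence terminates as
$$\Hom_\Lambda(Y_0,Y')\to \Hom_\Lambda(K,Y')\to \Ext^1_\Lambda(Y',Y')\to \Ext^1_\Lambda(Y_0,Y')\to \Ext^1_\Lambda(K,Y')\to 0,$$
so it is enough to verify the two vanishing statements $\Hom_\Lambda(K,Y')=0$ and $\Ext^1_\Lambda(Y_0,Y')=0$.

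For the first, the point is that $K$ is by definition generated by $X_0$, so there is an epimorphism $X_0^{m}\twoheadrightarrow K$ for some $m$, which yields an embedding $\Hom_\Lambda(K,Y')\hookrightarrow \Hom_\Lambda(X_0,Y')^m$; the latter vanishes by (\ding{174}). For the second, I would apply $\Hom_\Lambda(Y_0,-)$ to $(a)$: hereditariness produces a surjection $\Ext^1_\Lambda(Y_0,Y_0)\twoheadrightarrow \Ext^1_\Lambda(Y_0,Y')$, and $\Ext^1_\Lambda(Y_0,Y_0)=0$ since $T=Y_0\oplus X_0$ is a tilting $\Lambda$-module. No step is a real obstacle; the substance lies in observing that the defining property of the trace, recorded as (\ding{174}), is precisely what is needed to kill $\Hom_\Lambda(K,Y')$.
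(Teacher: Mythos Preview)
Your proof is correct and follows essentially the same approach as the paper's own proof: reduce via Lemma~\ref{L1}(b) to showing $\Ext^1_\Lambda(Y',Y')=0$, then use the exact sequence $(a)$ together with $\Hom_\Lambda(K,Y')=0$ (from \ding{174}) and $\Ext^1_\Lambda(Y_0,Y')=0$ (from self-orthogonality of $Y_0$). The only cosmetic difference is that the paper obtains $Y'\in\mathcal{T}(T_0)$ by citing closure of torsion classes under factor modules, whereas you verify it directly via the long exact sequence for $\Hom_\Lambda(T_0,-)$.
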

\begin{proof}
 Since  $Y_0\in\mathcal{T}(T_0)$ and $\mathcal{T}(T_0)$ is  closed under factor modules, we have $Y_0/Tr_{Y_0}X_0\in\mathcal{T}(T_0)$. By Lemma \ref{L1}, we have $\Ext^{1}_{B_0}(\Hom_\Lambda(T_0,Y_0/Tr_{Y_0}X_0),\Hom_\Lambda(T_0,Y_0/Tr_{Y_0}X_0))\cong \Ext^{1}_\Lambda(Y_0/Tr_{Y_0}X_0,$ $Y_0/Tr_{Y_0}X_0)$, so we just need to show $\Ext^{1}_{\Lambda}(Y_0/Tr_{Y_0}X_0,Y_0/Tr_{Y_0}X_0)=0$. Applying the functor $\Hom_\Lambda(-,$ $Y_0/Tr_{Y_0}X_0)$ to $(a)$ yields an exact sequence 
 $$\xymatrix@1{\Hom_\Lambda(Tr_{Y_0}X_0,Y_0/Tr_{Y_0}X_0)\ar[r] &\Ext^{1}_\Lambda(Y_0/Tr_{Y_0}X_0,Y_0/Tr_{Y_0}X_0)\ar[r]& \Ext^{1}_\Lambda(Y_0,Y_0/Tr_{Y_0}X_0).&(b)}$$
In view of  $Tr_{Y_0}X_0\in \mathsf{Gen} X_0$ and \ding{174}, the left exactness of $\Hom_\Lambda(-,Y_0/Tr_{Y_0}X_0)$ forces $\Hom_\Lambda(Tr_{Y_0}X_0,$ $Y_0/Tr_{Y_0}X_0)=0$ (\ding{175}). Since $Y_0$ is partial tilting, an application of the functor $\Hom_\Lambda(Y_0,-)$ to $(a)$ yields $\Ext^{1}_\Lambda(Y_0,Y_0/Tr_{Y_0}X_0)=0$. Thus, we have $\Ext^{1}_\Lambda(Y_0/Tr_{Y_0}X_0,Y_0/Tr_{Y_0}X_0)=0$ (\ding{176}).
\end{proof}

\begin{LemmaS}\label{basic}
Assume that ($\Lambda$, $T_0$, $B_0$, $T$, $Y_0$, $X_0$) fulfills the general assumption. Then $Y_0/Tr_{Y_0}X_0$ is basic.
\end{LemmaS}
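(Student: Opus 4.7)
The plan is to decompose $Y_0$ into its indecomposable summands and treat the quotient componentwise. Since $T = Y_0 \oplus X_0$ is basic, write $Y_0 = \bigoplus_{i=1}^{r} Y_i$ with pairwise non-isomorphic indecomposables. Because the trace in a direct sum is distributive in the target, $Tr_{Y_0}X_0 = \bigoplus_{i} Tr_{Y_i}X_0$, hence $Y_0/Tr_{Y_0}X_0 = \bigoplus_{i} M_i$ where $M_i := Y_i/Tr_{Y_i}X_0$; summands with $Y_i \in \mathsf{Gen} X_0$ give $M_i = 0$ and can be discarded. It thus suffices to show that every nonzero $M_i$ is indecomposable and that distinct nonzero $M_i, M_j$ are non-isomorphic.

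Both steps rest on the vanishing $\Ext^{1}_\Lambda(Y_i, Tr_{Y_j}X_0) = 0$ for all $i, j$. This follows from a surjection $X_0^{k} \twoheadrightarrow Tr_{Y_j}X_0$ with kernel $K$ (possible since $Tr_{Y_j}X_0 \in \mathsf{Gen} X_0$): the long exact sequence squeezes $\Ext^{1}_\Lambda(Y_i, Tr_{Y_j}X_0)$ between $\Ext^{1}_\Lambda(Y_i, X_0^{k}) = 0$ (self-orthogonality of $T$) and $\Ext^{2}_\Lambda(Y_i, K) = 0$ ($\Lambda$ hereditary). For indecomposability of a nonzero $M_i$, I apply Lemma \ref{L5}(b) to the short exact sequence $0 \to Tr_{Y_i}X_0 \xrightarrow{l_i} Y_i \xrightarrow{p_i} M_i \to 0$; its hypothesis — that each $\psi \in \End_\Lambda(M_i)$ lifts to $\phi \in \End_\Lambda(Y_i)$ with $p_i\phi = \psi p_i$ — is granted by the surjectivity of $\Hom_\Lambda(Y_i, Y_i) \to \Hom_\Lambda(Y_i, M_i)$, itself implied by $\Ext^{1}_\Lambda(Y_i, Tr_{Y_i}X_0) = 0$.

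For distinctness, suppose $\psi : M_i \to M_j$ is an isomorphism with $i \neq j$. The same lifting argument applied cross-summand yields $\phi : Y_i \to Y_j$ and $\phi' : Y_j \to Y_i$ with $p_j\phi = \psi p_i$ and $p_i\phi' = \psi^{-1} p_j$, whence $p_i(\phi'\phi - \id_{Y_i}) = 0$, and therefore $\phi'\phi = \id_{Y_i} + l_i h$ for some $h : Y_i \to Tr_{Y_i}X_0$. Since $Y_i \not\cong Y_j$ are indecomposable, $\phi'\phi$ cannot be invertible (otherwise $\phi$ would split as a monomorphism and make $Y_i$ a direct summand of $Y_j$), so it lies in the Jacobson radical of the local ring $\End_\Lambda(Y_i)$ and is nilpotent, say $(\id_{Y_i} + l_i h)^{N} = 0$. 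Expanding by the binomial formula, every term other than $\id_{Y_i}$ begins with a factor of $l_i$, so $\id_{Y_i}$ itself factors through $l_i$; being monic, $l_i$ is therefore an isomorphism and $M_i = 0$, contradicting our assumption. The main obstacle is precisely this last extraction — converting non-invertibility of $\phi'\phi$ into a splitting of $l_i$ — while the surrounding arguments are routine manipulations of traces and the Hom–Ext long exact sequence.
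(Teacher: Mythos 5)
Your proof is correct, and it takes a genuinely different route from the paper for the key ``distinctness'' step. The reduction to indecomposable summands of $Y_0$ and the indecomposability of each $Y_i/Tr_{Y_i}X_0$ via Lemma \ref{L5}(b) and the vanishing $\Ext^1_\Lambda(Y_i, Tr_{Y_j}X_0)=0$ are exactly the paper's opening moves. The divergence is in proving $Y_i/Tr_{Y_i}X_0 \not\cong Y_j/Tr_{Y_j}X_0$ for $i\ne j$: the paper lifts the hypothetical isomorphism to a map $k:Y_i\to Y_j$, invokes Lemma \ref{L6} (that a nonzero map between indecomposables with $\Ext^1(Y_j,Y_i)=0$ over a hereditary algebra is mono or epi), and then handles each case with a separate Snake-Lemma diagram chase and Ext calculations to force a splitting contradiction. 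You instead lift $\psi$ and $\psi^{-1}$ to $\phi:Y_i\to Y_j$ and $\phi':Y_j\to Y_i$, observe that $\phi'\phi = \id_{Y_i} + l_ih$ lies in the radical of the local ring $\End_\Lambda(Y_i)$ (else $\phi$ would be a split mono, forcing $Y_i\cong Y_j$), so is nilpotent, and the binomial expansion of $(\id_{Y_i}+l_ih)^N=0$ shows $\id_{Y_i}$ factors through the mono $l_i$, hence $l_i$ is an isomorphism and $M_i=0$ — a contradiction. Your argument is shorter and more self-contained: it avoids Lemma \ref{L6} entirely and replaces the two-branch Snake-Lemma analysis with one algebraic manipulation in a local endomorphism ring; the paper's version, on the other hand, makes the mono/epi dichotomy explicit, which is thematically consistent with the rest of its toolkit. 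Both are valid; yours is the cleaner proof of this particular lemma.
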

\begin{proof}
Suppose $Y_0=\bigoplus^{t}_{i=1}Y_i$ is the decomposition of $Y_0$ with indecomposable summands, for each $i$ we can consider the following exact sequence:
$$\xymatrix@1{
0\ar[r]& Tr_{Y_i}X_0\ar[r]^-{l_i}& Y_i\ar[r]^-{p_i}& Y_i/Tr_{Y_i}X_0\ar[r]& 0.&(a_i)}$$
According to \cite[ Proposition 8.18]{AF}, we know that $Tr_{Y_0}X_0=\bigoplus_{i=1}^{t} Tr_{Y_i}X_0$, and $Y_0/Tr_{Y_0}X_0=\bigoplus_{i=1}^{t}Y_i/Tr_{Y_i}X_0$.
Since $T=Y_0\oplus X_0$ is tilting and $Tr_{Y_0}X_0\in \mathsf{Gen} X_0$, by right exactness of $\Ext^{1}_\Lambda(Y_0,-)$, we obtain $\Ext^{1}_\Lambda(Y_0,Tr_{Y_0}X_0)=0$ (\ding{177}) and hence $\Ext^{1}_\Lambda(Y_i,Tr_{Y_i}X_0)=0$.
Application of the functor $\Hom_\Lambda(Y_i,-)$ to $(a_i)$ now shows that the homomorphism $\Hom_\Lambda(Y_i,p_i): \Hom_\Lambda(Y_i,Y_i)\to \Hom_\Lambda(Y_i,Y_i/Tr_{Y_i}X_0)$ is an epimorphism. 
So for each $\phi:Y_i/Tr_{Y_i}X_0\to Y_i/Tr_{Y_i}X_0$ there exists $\psi:Y_i\to Y_i$ such that $\phi p_i=p_i \psi$. By Lemma \ref{L5} $(b)$, we  know $Y_i/Tr_{Y_i}X_0$ is indecomposable or zero.

Suppose that there exists an isomorphism $\nu:Y_i/Tr_{Y_i}X_0\to Y_j/Tr_{Y_j}X_0$ with $Y_i/Tr_{Y_i}X_0\not= 0,Y_j/Tr_{Y_j}X_0\not= 0$ and $i\not=j$. Consider the following commutative diagram:
$$\xymatrix{0\ar[r]&Tr_{Y_i}X_0\ar[r]\ar@{-->}[d]^{h}&Y_i\ar[r]^-{p_i}\ar@{-->}[d]^{k}&Y_i/Tr_{Y_i}X_0\ar[d]^{\nu}\ar[r]&0&(1)\\
0\ar[r]&Tr_{Y_j}X_0\ar[r]&Y_j\ar[r]^-{p_j}&Y_j/Tr_{Y_j}X_0\ar[r]&0,&(2)
}$$
where the existence of $k$ is ensured by \ding{177}. According to Lemma \ref{L6}, $k$ is either a monomorphism or an epimorphism.

If $k$ is a monomorphism, consider the following commutative diagram obtained by using the Snake Lemma and taking the cokernels of $h$ and $k$ :
$$\xymatrix{0\ar[r]&Y_i\ar[r]&Y_j\ar[r]&E\ar[r]&0&(3)\\
0\ar[r]&Tr_{Y_i}X_0\ar[r]\ar[u]&Tr_{Y_j}X_0\ar[r]\ar[u]&E\ar[r]\ar[u]&0.&(4)}
$$
Applying $\Hom_\Lambda(-,Y_i/Tr_{Y_i}X_0)$ to $(4)$, we obtain an exact sequence 
$$\xymatrix@1{\Hom_\Lambda(Tr_{Y_i}X_0,Y_i/Tr_{Y_i}X_0)\ar[r]& \Ext^{1}_\Lambda(E,Y_i/Tr_{Y_i}X_0)\ar[r]& \Ext^{1}_\Lambda(Tr_{Y_j}X_0,Y_i/Tr_{Y_i}X_0).}$$
In view of the exact sequence $(b)$ and \ding{176}, we have $\Ext^{1}_\Lambda(Tr_{Y_0}X_0,Y_0/Tr_{Y_0}X_0)=0$. Then \ding{175} implies $\Ext^{1}_\Lambda(E,$ $Y_i/Tr_{Y_i}X_0)=0$ (\ding{178}). Applying the functor $\Hom_\Lambda(-,Y_i/Tr_{Y_i}X_0)$ to $(1)$, \ding{175} yields an isomorphism $\Hom_\Lambda(Y_i/Tr_{Y_i}X_0,Y_i/Tr_{Y_i}X_0)\cong \Hom_\Lambda(Y_i,Y_i/Tr_{Y_i}X_0)$. Thanks to Lemma \ref{L6} and \ding{176},  $\dim_k\Hom_\Lambda(Y_i,Y_i/Tr_{Y_i}X_0)=\dim_k\Hom_\Lambda(Y_i/Tr_{Y_i}X_0,Y_i/Tr_{Y_i}X_0)=1$. Applying the functor $\Hom_\Lambda(Y_i,-)$ to $(1)$ while observing  \ding{177}, we obtain an exact sequence 
$$\xymatrix@1{0\ar[r]& \Hom_\Lambda(Y_i,Tr_{Y_i}X_0)\ar[r]& \Hom_\Lambda(Y_i,Y_i)\ar[r]& \Hom_\Lambda(Y_i,Y_i/Tr_{Y_i}X_0)\ar[r]& 0.}$$ 
Obviously, $Y_i$ is self-orthogonal, by Lemma \ref{L6},  $\dim_k\Hom_\Lambda(Y_i,Y_i)=1$. Then $\dim_k\Hom_\Lambda(Y_i,$ $Y_i/Tr_{Y_i}X_0)=1$ implies $\dim_k\Hom_\Lambda$ $(Y_i,Tr_{Y_i}X_0)=0$, and hence $\Hom_\Lambda(Y_i,Tr_{Y_i}X_0)=0$ (\ding{179}).
Applying the functor $\Hom_\Lambda(-,Tr_{Y_i}X_0)$ to $(3)$, we obtain an exact sequence: 
$$\xymatrix@1{\Hom_\Lambda(Y_i,Tr_{Y_i}X_0)\ar[r]&\Ext^{1}_\Lambda(E,Tr_{Y_i}X_0)\ar[r]& \Ext^{1}_\Lambda(Y_j,Tr_{Y_i}X_0).}$$
Then \ding{177} implies $\Ext^{1}_\Lambda(E,Tr_{Y_i}X_0)=0$.
Upon applying the functor $\Hom_\Lambda(E,-)$ to $(1)$, \ding{178} forces $\Ext^{1}_\Lambda(E,Y_i)=0$ which implies $(3)$ is split. But since $Y_j$ is indecomposable and $Y_i\not=0$, we arrive at $E=0$. So $Y_i\cong Y_j$, a contradiction.

If $k$ is an epimorphism, consider the following commutative diagram obtained by using the Snake Lemma and taking the kernels of $h$ and $k$
$$\xymatrix{0\ar[r]&F\ar[r]&Y_i\ar[r]&Y_j\ar[r]&0&(5)\\
0\ar[r]&F\ar[r]\ar[u]&Tr_{Y_i}X_0\ar[r]\ar[u]&Tr_{Y_j}X_0\ar[r]\ar[u]&0.&(6)
}$$
Applying the functor $\Hom_\Lambda(Y_j,-)$ to $(6)$, yields an exact sequence 
$$\xymatrix{\Hom_\Lambda(Y_j,Tr_{Y_j}X_0)\ar[r] &\Ext^{1}_\Lambda(Y_j,F)\ar[r] &\Ext^{1}_\Lambda(Y_j,Tr_{Y_i}X_0).}$$
Now \ding{179} and  \ding{177} force $\Ext^{1}_\Lambda(Y_j,F)=0$ which implies $(5)$ is split, a contradiction. Hence we obtain the assertion.
\end{proof}

Now we are in a position to verify Theorem \ref{The 1}.

\begin{TheoremS}\label{perfect}
Suppose $(\Lambda$, $T_0$, $B_0$, $T$, $Y_0$, $X_0)$ satisfies the general assumption. If no indecomposable summand of $Y_0$ is generated by $X_0$, then $\Hom_\Lambda(T_0,Y_0/Tr_{Y_0}X_0)\oplus \Ext^{1}_\Lambda(T_0,X_0)$ is a tilting $B_0$-module.
\end{TheoremS}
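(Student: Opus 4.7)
The plan is to apply Proposition~\ref{perfecttilting}, reducing the claim to showing that $M:=\Hom_\Lambda(T_0,Y_0/Tr_{Y_0}X_0)\oplus \Ext^{1}_\Lambda(T_0,X_0)$ is perfect exceptional over $B_0$ (finite projective dimension is automatic since $\mathrm{gl.dim}\,B_0\leq 2$). Counting summands is straightforward: Lemma~\ref{basic} ensures $Y_0/Tr_{Y_0}X_0$ is basic, and the hypothesis that no indecomposable summand of $Y_0$ is generated by $X_0$ keeps every $Y_i/Tr_{Y_i}X_0$ nonzero, so $Y_0/Tr_{Y_0}X_0$ has the same number of non-isomorphic indecomposable summands as $Y_0$. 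Applying the quasi-inverse equivalences of Theorem~\ref{Th0} and using that $\mathcal{X}(T_0)\cap \mathcal{Y}(T_0)=0$ then shows $M$ has exactly $n$ pairwise non-isomorphic indecomposable summands.

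For self-orthogonality, write $Y':=\Hom_\Lambda(T_0,Y_0/Tr_{Y_0}X_0)\in\mathcal{Y}(T_0)$ and $X':=\Ext^{1}_\Lambda(T_0,X_0)\in\mathcal{X}(T_0)$; I only need to kill $\Ext^i_{B_0}(Y'\oplus X',Y'\oplus X')$ for $i=1,2$. The $\Ext^1$-vanishings of the four pieces follow directly from the translation lemmas already at hand: $\Ext^1_{B_0}(Y',Y')\cong \Ext^1_\Lambda(Y_0/Tr_{Y_0}X_0,Y_0/Tr_{Y_0}X_0)=0$ by Lemma~\ref{L1} and Lemma~\ref{selforthogonal}; $\Ext^1_{B_0}(X',X')\cong \Ext^1_\Lambda(X_0,X_0)=0$ by Lemma~\ref{L2} and the fact that $X_0$ is partial tilting; $\Ext^1_{B_0}(Y',X')\cong \Ext^2_\Lambda(Y_0/Tr_{Y_0}X_0,X_0)=0$ by Lemma~\ref{L3} and the hereditary hypothesis; and $\Ext^1_{B_0}(X',Y')\cong \Hom_\Lambda(X_0,Y_0/Tr_{Y_0}X_0)=0$ by Lemma~\ref{L4} and the identity~(\ding{174}). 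The mixed second-order terms are handled the same way: $\Ext^2_{B_0}(Y',X')\cong \Ext^3_\Lambda(\cdot,\cdot)=0$ via Lemma~\ref{L3}, and $\Ext^2_{B_0}(X',Y')\cong \Ext^1_\Lambda(X_0,Y_0/Tr_{Y_0}X_0)$ via Lemma~\ref{L4}, the latter vanishing after applying $\Hom_\Lambda(X_0,-)$ to sequence $(a)$ and invoking $\Ext^1_\Lambda(X_0,Y_0)=0$.

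The main obstacle is proving $\Ext^2_{B_0}(Y',Y')=0$ and $\Ext^2_{B_0}(X',X')=0$, since Lemmas~\ref{L1} and~\ref{L2} do not address these combinations directly. My strategy is a dimension shift. For $Y'$, Lemma~\ref{L0}(d) supplies $0\to L\to T_0^{(0)}\to Y_0/Tr_{Y_0}X_0\to 0$ with $L\in\mathcal{T}(T_0)$ and $T_0^{(0)}\in\mathsf{add}\,T_0$; applying $\Hom_\Lambda(T_0,-)$ produces a short exact sequence in $\mathrm{mod}\,B_0$ whose middle term $\Hom_\Lambda(T_0,T_0^{(0)})$ is projective, so the long exact sequence together with Lemma~\ref{L1} gives $\Ext^2_{B_0}(Y',Y')\cong \Ext^1_\Lambda(L,Y_0/Tr_{Y_0}X_0)$, which vanishes because applying $\Hom_\Lambda(-,Y_0/Tr_{Y_0}X_0)$ to the same sequence only involves the already-vanishing $\Ext^1_\Lambda(T_0^{(0)},Y_0/Tr_{Y_0}X_0)$. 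Dually, Lemma~\ref{L0}(e) provides $0\to X_0\to \overline{T_\tau}\to K\to 0$ with $\overline{T_\tau}\in\mathsf{add}\,\tau T_0$ and $K\in\mathcal{F}(T_0)$; applying $\Hom_\Lambda(T_0,-)$ embeds $X'$ into the injective $B_0$-module $\Ext^1_\Lambda(T_0,\overline{T_\tau})$ with cokernel $\Ext^1_\Lambda(T_0,K)$, so a dimension shift and Lemma~\ref{L2} identify $\Ext^2_{B_0}(X',X')\cong \Ext^1_\Lambda(X_0,K)$, and the Auslander--Reiten formula $\Ext^1_\Lambda(X_0,\overline{T_\tau})\cong D\Hom_\Lambda(T_0,X_0)=0$ combined with $\Ext^1_\Lambda(X_0,X_0)=0$ forces the remaining term to vanish, completing the argument.
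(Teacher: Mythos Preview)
Your proof is correct and follows the same overall architecture as the paper: reduce to showing perfect exceptionality and invoke Proposition~\ref{perfecttilting}; handle the counting via Lemma~\ref{basic}, the Tilting Theorem, and the hypothesis on $Y_0$; and dispatch the $\Ext^1$ pieces via the translation Lemmas~\ref{L1}--\ref{L4} together with Lemma~\ref{selforthogonal} and identity~(\ding{174}).

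The one genuine difference is your treatment of the $\Ext^2$ terms. You perform explicit dimension shifts, manufacturing approximation sequences from Lemma~\ref{L0}(d),(e), pushing them through $\Hom_\Lambda(T_0,-)$ to obtain projective (resp.\ injective) middle terms, and then shifting degree. The paper instead quotes the inequalities ${\rm pd}_{B_0}\Hom_\Lambda(T_0,M)\leq {\rm pd}_\Lambda M$ and ${\rm id}_{B_0}\Ext^1_\Lambda(T_0,N)\leq {\rm id}_\Lambda N$ from \cite[1.7]{KB}, which give ${\rm pd}_{B_0}Y'\leq 1$ and ${\rm id}_{B_0}X'\leq 1$ in one line and hence kill \emph{all} $\Ext^2_{B_0}(-,-)$ involving $Y'$ in the first slot or $X'$ in the second. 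Your argument is more self-contained (it essentially re-derives those bounds in the special cases needed), while the paper's is considerably shorter and makes the finiteness of projective dimension of the candidate module more transparent. One small redundancy in your write-up: in the last step for $\Ext^2_{B_0}(X',X')$, the Auslander--Reiten vanishing $\Ext^1_\Lambda(X_0,\overline{T_\tau})=0$ alone already forces $\Ext^1_\Lambda(X_0,K)=0$ via the long exact sequence (heredity kills $\Ext^2$), so the appeal to $\Ext^1_\Lambda(X_0,X_0)=0$ is not needed there.
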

\begin{proof}
By \cite[1.7]{KB}, we have ${\rm pd}_{B_0}\Hom_\Lambda(T_0,Y_0/Tr_{Y_0}X_0)\leq {\rm pd}_\Lambda Y_0/Tr_{Y_0}X_0\leq 1$ and ${\rm id}_{B_0} \Ext^{1}_\Lambda(T_0,$ $X_0)\leq {\rm id}_\Lambda X_0\leq 1$, so that $\Ext^{2}_{B_0}(\Hom_\Lambda(T_0,Y_0/Tr_{Y_0}X_0),\Hom_\Lambda(T_0,Y_0/Tr_{Y_0}X_0))=\Ext^{2}_{B_0}(\Ext^{1}_\Lambda$ $(T_0,X_0),\Ext^{1}_\Lambda(T_0,X_0))=0$. Lemma \ref{L3} implies $\Ext^{1}_{B_0}(\Hom_\Lambda(T_0,Y_0/Tr_{Y_0}X_0),$ $\Ext^{1}_\Lambda(T_0,X_0))\cong \Ext^{2}_\Lambda(Y_0/Tr_{Y_0}X_0,X_0)=0$.
Applying the functor $\Hom_\Lambda(X_0,-)$ to the exact sequence $(a)$, we obtain $\Ext^{1}_\Lambda(X_0,Y_0/Tr_{Y_0}X_0)=0$.
Then, by  Lemma \ref{L4}, $\Ext^{2}_{B_0}(\Ext^{1}_\Lambda(T_0,X_0),$ $\Hom_\Lambda(T_0,Y_0/Tr_{Y_0}X_0))\cong \Ext^{1}_\Lambda(X_0,Y_0/Tr_{Y_0}X_0)=0$, and $\Ext^{1}_{B_0}(\Ext^{1}_\Lambda(T_0,X_0),\Hom_\Lambda(T_0,Y_0/Tr_{Y_0}X_0))\cong \Hom_\Lambda(X_0,Y_0/$ $Tr_{Y_0}X_0)=0$. So $\Ext^{1}_\Lambda(T_0,X_0)\oplus \Hom_\Lambda(T_0,Y_0/Tr_{Y_0}X_0) $ is self-orthogonal and obviously exceptional. Note that $Y_i$ is generated by $X_0$ whenever $Y_i/Tr_{Y_i}X_0=0$, then since no indecomposable summand of $Y_0$ is generated by $X_0$, we obtain $Y_i/Tr_{Y_i}X_0\not=0$ for $i=1,2\cdots t$. According to \cite[\uppercase\expandafter{\romannumeral 3} 1.5]{Ha1},  $B_0$ admits the same number of non-isomorphic simple modules as $\Lambda$. Then by the Tilting Theorem \ref{Th0} and Lemmas \ref{selforthogonal}, \ref{basic}, we know it is perfect exceptional. By Proposition \ref{perfecttilting}, we know it's tilting and the conclusion follows.
\end{proof}

Thus, for each given tilting $\Lambda$-module $Y_0\oplus X_0$ with $Y_0\in\mathcal{T}(T_0)$ and $X_0\in\mathcal{F}(T_0)$, if no indecomposable summand of $Y_0$ is generated by $X_0$ , we obtain a tilting $B_0$-module $\Ext^{1}_\Lambda(T_0,X_0)\oplus \Hom_\Lambda(T_0,Y_0/Tr_{Y_0}X_0)$.

Similarly, we can also use the reject to construct tilting modules. Let $\mathcal{U}$ be a class of $A$-modules and $M$ be an $A$-module, the \textbf{reject} of $\mathcal{U}$ in $M$ is defined to be 
$$Rej_M(\mathcal{U})=\bigcap\{kerh|h:M\to U\text{ for some }U\in\mathcal{U}\}.$$
In particular, when $\mathcal{U}=\{U\}$, we just write $Rej_MU$.

Let's consider the following short exact sequence:
$$\xymatrix@1{0\ar[r]&Rej_{X_0}Y_0\ar[r]^-{\pi}&X_0\ar[r]^-{q}&X_0/Rej_{X_0}Y_0\ar[r]&0,&(a^{'})
}$$
such that $\pi$ is the embedding and $(q,Z_0)$ is the cokernel.

We will give the following conclusions but omit the proof (the reader is referred to\cite{HWP} for details).

\begin{CorollaryS}\label{Tiltingdual1}
Let $\Lambda$ be hereditary, $T_0$ be a tilting $\Lambda$-module, $T=Y_0\oplus X_0$  be a tilting $\Lambda$-module such that $Y_0\in\mathcal{T}(T_0)$, $X_0\in\mathcal{F}(T_0)$. If no  indecomposable summand of $X_0$ is cogenerated by $Y_0$, then  
 $\Hom_\Lambda(T_0,Y_0)\oplus \Ext^{1}_\Lambda(T_0,Rej_{X_0}Y_0)$ is a tilting $B_0$-module.
\end{CorollaryS}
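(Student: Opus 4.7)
The plan is to dualize the proof of Theorem \ref{perfect}, replacing the trace $Tr_{Y_0}X_0$ in $Y_0$ by the reject $Rej_{X_0}Y_0$ in $X_0$ and swapping Lemma \ref{L1} with Lemma \ref{L2} (and Lemma \ref{L3} with Lemma \ref{L4}). Set $M := \Hom_\Lambda(T_0,Y_0)\oplus \Ext^{1}_\Lambda(T_0,Rej_{X_0}Y_0)$. Since $\mathcal{F}(T_0)$ is closed under submodules and $Rej_{X_0}Y_0\hookrightarrow X_0\in\mathcal{F}(T_0)$, the second summand is well-defined and lies in $\mathcal{X}(T_0)$, while the first lies in $\mathcal{Y}(T_0)$; \cite[1.7]{KB} then gives ${\rm pd}_{B_0}M\leq 1$. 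The goal is to show $M$ is perfect exceptional and invoke Proposition \ref{perfecttilting}.

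For self-orthogonality I would check all four $\Ext^{1}_{B_0}$-blocks of $M$. Lemma \ref{L1} gives $\Ext^{1}_{B_0}(\Hom_\Lambda(T_0,Y_0),\Hom_\Lambda(T_0,Y_0))\cong\Ext^{1}_\Lambda(Y_0,Y_0)=0$; Lemma \ref{L2} reduces the block on $\Ext^{1}_\Lambda(T_0,Rej_{X_0}Y_0)$ to $\Ext^{1}_\Lambda(Rej_{X_0}Y_0,Rej_{X_0}Y_0)$, which I would kill by applying $\Hom_\Lambda(Rej_{X_0}Y_0,-)$ to $(a^{'})$ once the two vanishings $\Hom_\Lambda(Rej_{X_0}Y_0,X_0/Rej_{X_0}Y_0)=0$ and $\Ext^{1}_\Lambda(Rej_{X_0}Y_0,X_0)=0$ are established. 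The first follows because $X_0/Rej_{X_0}Y_0\hookrightarrow Y_0^{k}$ and $\Hom_\Lambda(Rej_{X_0}Y_0,Y_0)=0$ by the defining kernel property of the reject (justified through $\Ext^{1}_\Lambda(X_0/Rej_{X_0}Y_0,Y_0)=0$, itself a consequence of $\Lambda$ hereditary and $\Ext^{1}_\Lambda(Y_0,Y_0)=0$); the second comes from $\Ext^{1}_\Lambda(X_0,X_0)=0$ after applying $\Hom_\Lambda(-,X_0)$ to $(a^{'})$. Finally Lemma \ref{L3} identifies the cross block $\Ext^{1}_{B_0}(\Hom_\Lambda(T_0,Y_0),\Ext^{1}_\Lambda(T_0,Rej_{X_0}Y_0))$ with $\Ext^{2}_\Lambda(Y_0,Rej_{X_0}Y_0)=0$, and Lemma \ref{L4} with $j=0$ identifies the other cross block with $\Hom_\Lambda(Rej_{X_0}Y_0,Y_0)=0$.

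For the count, decompose $X_0=\bigoplus_{i=1}^{s}X_i$ into indecomposables. A dual of \cite[Proposition 8.18]{AF} yields $Rej_{X_0}Y_0=\bigoplus_i Rej_{X_i}Y_0$, and the hypothesis guarantees every $Rej_{X_i}Y_0\neq 0$. I would then dualize Lemma \ref{basic}: using Lemma \ref{L5}(a) on the sequence $0\to Rej_{X_i}Y_0\to X_i\to X_i/Rej_{X_i}Y_0\to 0$ to show each $Rej_{X_i}Y_0$ is indecomposable (the liftability hypothesis holding because $\Ext^{1}_\Lambda(X_i/Rej_{X_i}Y_0,X_i)=0$, itself a consequence of $X_i/Rej_{X_i}Y_0\in\mathsf{Cogen}Y_0$ and $\Ext^{1}_\Lambda(Y_0,X_0)=0$), and then ruling out isomorphisms $Rej_{X_i}Y_0\cong Rej_{X_j}Y_0$ for $i\neq j$ by lifting such an isomorphism to a nonzero $h:X_i\to X_j$ via the same $\Ext^{1}$-vanishing, applying Lemma \ref{L6} to make $h$ mono or epi, and running a Snake-lemma diagram chase dual to the two cases of Lemma \ref{basic} to derive a splitting that contradicts the indecomposability of $X_i$ or $X_j$. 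Together with the $t$ indecomposable summands of $Y_0$ and \cite[\uppercase\expandafter{\romannumeral 3} 1.5]{Ha1}, $M$ then has exactly $n$ pairwise non-isomorphic indecomposable summands, is perfect exceptional, and tilting by Proposition \ref{perfecttilting}. The main obstacle is transposing the Snake-lemma argument of Lemma \ref{basic}: since the embedding is now $Rej_{X_i}Y_0\hookrightarrow X_i$ rather than $Tr_{Y_i}X_0\hookrightarrow Y_i$, every auxiliary Hom/Ext vanishing must be re-derived from cogenerator-style considerations on the partial-tilting module $X_0$ rather than the generator-style ones that served for traces inside $Y_0$.
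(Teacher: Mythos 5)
Your proposal carries out exactly the dualization the paper indicates and then leaves to the dissertation: replace trace by reject, replace Lemma \ref{L1}/\ref{L3} by Lemma \ref{L2}/\ref{L4} where appropriate, and run the indecomposability and basicity arguments of Lemma \ref{basic} through $(a')$ instead of $(a)$. The overall structure — reduce self-orthogonality to four $\Ext$-blocks, verify the count, conclude perfect exceptional and invoke Proposition \ref{perfecttilting} — is sound and is the intended route.

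One point to correct: the claim that \cite[1.7]{KB} gives ${\rm pd}_{B_0}M\leq 1$ is false. That reference controls ${\rm pd}_{B_0}$ of the $\mathcal{Y}$-part ($\Hom_\Lambda(T_0,Y_0)$) and the \emph{injective} dimension of the $\mathcal{X}$-part ($\Ext^1_\Lambda(T_0,Rej_{X_0}Y_0)$); the latter can well have projective dimension $2$ (compare Lemma \ref{Simple}(c)). The paper's own proof of Theorem \ref{perfect} is careful to use ${\rm pd}$ on one side and ${\rm id}$ on the other for precisely this reason. Consequently you must also check the second cross block in degree two: $\Ext^{2}_{B_0}\bigl(\Ext^1_\Lambda(T_0,Rej_{X_0}Y_0),\Hom_\Lambda(T_0,Y_0)\bigr)\cong\Ext^1_\Lambda(Rej_{X_0}Y_0,Y_0)$ by Lemma \ref{L4} with $j=1$, and this vanishes by applying $\Hom_\Lambda(-,Y_0)$ to $(a')$ using $\Ext^1_\Lambda(X_0,Y_0)=0$ (tilting) and $\Ext^2_\Lambda=0$ (hereditary). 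With that addition your argument is complete; the other three blocks, the vanishing $\Hom_\Lambda(Rej_{X_0}Y_0,Y_0)=0$, and the dualized Lemma \ref{basic} steps are all correctly identified.
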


\begin{CorollaryS}\label{Tiltingdual2}
Let $\Lambda$ be hereditary, $T_0$ be a tilting $\Lambda$-module, $T=Y_0\oplus X_0$  be a tilting $\Lambda$-module such that $Y_0\in\mathcal{T}(T_0)$, $X_0\in\mathcal{F}(T_0)$. If no indecomposable summand of $Y_0$ is generated by $X_0$ and no  indecomposable summands of $X_0$ is cogenerated by $Y_0$, then  
 $\Hom_\Lambda(T_0,Y_0/Tr_{Y_0}X_0)\oplus \Ext^{1}_\Lambda(T_0,Rej_{X_0}Y_0)$ is a tilting $B_0$-module.
\end{CorollaryS}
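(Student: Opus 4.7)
The plan is to verify that $W := \Hom_\Lambda(T_0, Y_0/Tr_{Y_0}X_0) \oplus \Ext^{1}_\Lambda(T_0, Rej_{X_0}Y_0)$ is a perfect exceptional $B_0$-module and then invoke Proposition \ref{perfecttilting}. The argument runs in parallel to the proof of Theorem \ref{perfect} for the $\Hom_\Lambda(T_0, Y_0/Tr_{Y_0}X_0)$-summand, combined with the dual statements (the ones underlying Corollary \ref{Tiltingdual1}) for the $\Ext^{1}_\Lambda(T_0, Rej_{X_0}Y_0)$-summand.

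First, \cite[1.7]{KB} gives ${\rm pd}_{B_0} \Hom_\Lambda(T_0, Y_0/Tr_{Y_0}X_0) \leq 1$ and ${\rm id}_{B_0} \Ext^{1}_\Lambda(T_0, Rej_{X_0}Y_0) \leq 1$, which kills all the $\Ext^{2}_{B_0}$ groups between summands of $W$. The diagonal piece $\Ext^{1}_{B_0}(\Hom_\Lambda(T_0, Y_0/Tr_{Y_0}X_0), \Hom_\Lambda(T_0, Y_0/Tr_{Y_0}X_0))$ vanishes by Lemma \ref{selforthogonal}. The other diagonal piece $\Ext^{1}_{B_0}(\Ext^{1}_\Lambda(T_0, Rej_{X_0}Y_0), \Ext^{1}_\Lambda(T_0, Rej_{X_0}Y_0))$ is handled by the dual of Lemma \ref{selforthogonal}: via Lemma \ref{L2} the problem reduces to $\Ext^{1}_\Lambda(Rej_{X_0}Y_0, Rej_{X_0}Y_0) = 0$, which is established by applying $\Hom_\Lambda(-, Rej_{X_0}Y_0)$ and $\Hom_\Lambda(Rej_{X_0}Y_0, -)$ to the defining sequence $(a^{'})$ and using that $X_0$ is partial tilting. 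For the first off-diagonal block, Lemma \ref{L3} identifies $\Ext^{1}_{B_0}(\Hom_\Lambda(T_0, Y_0/Tr_{Y_0}X_0), \Ext^{1}_\Lambda(T_0, Rej_{X_0}Y_0))$ with $\Ext^{2}_\Lambda(Y_0/Tr_{Y_0}X_0, Rej_{X_0}Y_0)$, which vanishes because $\Lambda$ is hereditary.

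The crucial step is the remaining off-diagonal block. By Lemma \ref{L4} it is isomorphic to $\Hom_\Lambda(Rej_{X_0}Y_0, Y_0/Tr_{Y_0}X_0)$, and I would prove this vanishes as follows. Applying $\Hom_\Lambda(-, Y_0/Tr_{Y_0}X_0)$ to $(a^{'})$, using \ding{174} (i.e.\ $\Hom_\Lambda(X_0, Y_0/Tr_{Y_0}X_0) = 0$) together with $\Ext^{1}_\Lambda(X_0, Y_0/Tr_{Y_0}X_0) = 0$ (which follows from $\Ext^{1}_\Lambda(X_0, Y_0) = 0$ applied to $(a)$), one gets an isomorphism $\Hom_\Lambda(Rej_{X_0}Y_0, Y_0/Tr_{Y_0}X_0) \cong \Ext^{1}_\Lambda(X_0/Rej_{X_0}Y_0, Y_0/Tr_{Y_0}X_0)$. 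But $X_0/Rej_{X_0}Y_0 \in \mathsf{Cogen}(Y_0)$ by the very definition of the reject, so it embeds into some $Y_0^{m}$; applying $\Hom_\Lambda(-, Y_0/Tr_{Y_0}X_0)$ to that embedding, invoking $\Ext^{1}_\Lambda(Y_0, Y_0/Tr_{Y_0}X_0) = 0$ (established inside the proof of Lemma \ref{selforthogonal}) and using that $\Lambda$ is hereditary, forces $\Ext^{1}_\Lambda(X_0/Rej_{X_0}Y_0, Y_0/Tr_{Y_0}X_0) = 0$. This is the main obstacle, since the $\Hom$ in question runs from the torsion-free class $\mathcal{F}(T_0)$ into the torsion class $\mathcal{T}(T_0)$, where no automatic vanishing is available.

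Finally, to count indecomposable summands: Lemma \ref{basic}, combined with the hypothesis that no indecomposable summand of $Y_0$ is generated by $X_0$, shows that $Y_0/Tr_{Y_0}X_0$ is basic with the same number of indecomposable summands as $Y_0$; the dual of Lemma \ref{basic}, combined with the hypothesis that no indecomposable summand of $X_0$ is cogenerated by $Y_0$, shows that $Rej_{X_0}Y_0$ is basic with the same number of indecomposable summands as $X_0$. Since $B_0$ has the same number of simple modules as $\Lambda$, $W$ is perfect exceptional, and Proposition \ref{perfecttilting} completes the proof.
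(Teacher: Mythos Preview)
The paper omits the proof of this corollary entirely, deferring to \cite{HWP}, so there is no in-paper argument to compare against. Your strategy---showing $W$ is perfect exceptional and invoking Proposition~\ref{perfecttilting}, by combining the trace-side computations from Theorem~\ref{perfect} with their reject-side duals underlying Corollary~\ref{Tiltingdual1}---is exactly the approach one would expect from the way the paper presents the surrounding material, and your treatment of the crucial cross term $\Hom_\Lambda(Rej_{X_0}Y_0,\,Y_0/Tr_{Y_0}X_0)$ is correct.

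There is, however, one small gap. You claim that the bounds ${\rm pd}_{B_0}\Hom_\Lambda(T_0,Y_0/Tr_{Y_0}X_0)\le 1$ and ${\rm id}_{B_0}\Ext^{1}_\Lambda(T_0,Rej_{X_0}Y_0)\le 1$ kill \emph{all} the $\Ext^{2}_{B_0}$ groups between summands of $W$. They do not: the group
\[
\Ext^{2}_{B_0}\bigl(\Ext^{1}_\Lambda(T_0,Rej_{X_0}Y_0),\,\Hom_\Lambda(T_0,Y_0/Tr_{Y_0}X_0)\bigr)
\]
is not covered by either bound (you have neither a projective-dimension bound on the first argument nor an injective-dimension bound on the second). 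You do need to handle this case separately. Fortunately it follows from ingredients you already have: by Lemma~\ref{L4} this $\Ext^{2}$ is isomorphic to $\Ext^{1}_\Lambda(Rej_{X_0}Y_0,\,Y_0/Tr_{Y_0}X_0)$; since $Rej_{X_0}Y_0\subseteq X_0$ and $\Lambda$ is hereditary, applying $\Hom_\Lambda(-,Y_0/Tr_{Y_0}X_0)$ to $(a')$ shows this is a quotient of $\Ext^{1}_\Lambda(X_0,\,Y_0/Tr_{Y_0}X_0)$, which vanishes (this is established in the proof of Theorem~\ref{perfect}). With this correction, your argument goes through.
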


\section{Tilting quivers of BB-tilted algebras}
\noindent 
Recall that for a given algebra $A$, $\mathcal{T}_A$ denotes the set of equivalence classes of tilting $A$-modules. Two tilting modules $T_1,T_2$ belong to  the same class provided $\mathsf{add}(T_1)=\mathsf{add}(T_2)$. So when we consider elements of $\mathcal{T}_A$, we just consider basic tilting modules.

\begin{Remark}\label{ad}
For a given algebra $A$, the number of non-isomporphic indecomposable summands of a tilting module is constant. So if $T$ is  a tilting module and $X$ is exceptional such that $\Ext^{i}_A(T,X)=\Ext^{i}_A(X,T)=0$ for $i\geq 1$, then $T\oplus X$ is tilting and $\mathsf{add} T=\mathsf{add} (T\oplus X)$.
\end{Remark}

Recall that $\Lambda$ denotes a hereditary algebra and for a given a tilting $\Lambda$-module $T_0$,  $\mathcal{T}_{\mathcal{T}\cup\mathcal{F}}\subset\mathcal{T}_\Lambda$ is the subset of $\mathcal{T}_\Lambda$ consisting of equivalence classes whose representatives have indecomposable direct summands only in $\mathcal{T}(T_0)\cup\mathcal{F}(T_0)$.

We say that $T_0$ is \textbf{admissible}, if for every tilting $\Lambda$-module $T=Y_0\oplus X_0$ with $Y_0\in\mathcal{T}(T_0)$ and $X_0\in\mathcal{F}(T_0)$, no indecomposable summand of $Y_0$ is generated by $X_0$.

We will see in Lemma \ref{Simple}, BB-tilting modules are admissible. And examples will show there exists admissible titling modules which are not BB-tilting.

\begin{PropositionS}\label{injection}
Let $T_0$, $B_0$, $\mathcal{T}_{\mathcal{T}\cup\mathcal{F}}$ be as above with $T_0$ being admissible. Then  $$\Phi:\mathcal{T}_{\mathcal{T}\cup\mathcal{F}}\to \mathcal{T}_{B_0}$$ via $T=Y_0\oplus X_0\mapsto \Hom_\Lambda(T_0,Y_0/Tr_{Y_0}X_0)\oplus \Ext^{1}_\Lambda(T_0,X_0)$, with $Y_0\in\cT(T_0),X_0\in\cF(T_0)$ is a well-defined injective map.
\end{PropositionS}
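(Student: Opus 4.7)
The plan has two parts: well-definedness and injectivity. For well-definedness, each basic $T\in\mathcal{T}_\Lambda$ representing a class in $\mathcal{T}_{\mathcal{T}\cup\mathcal{F}}$ decomposes uniquely as $T=Y_0\oplus X_0$ with $Y_0\in\mathcal{T}(T_0)$ and $X_0\in\mathcal{F}(T_0)$, since $\mathcal{T}(T_0)\cap\mathcal{F}(T_0)=0$ forces each indecomposable summand to lie in precisely one of the two classes. By admissibility of $T_0$, no indecomposable summand of $Y_0$ is generated by $X_0$, so Theorem \ref{perfect} applies and $\Phi(T)$ is a tilting $B_0$-module; since we always use basic representatives, $\Phi$ respects the equivalence relation.

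For injectivity, suppose $\mathsf{add}(\Phi(T))=\mathsf{add}(\Phi(T'))$ with $T=Y_0\oplus X_0$ and $T'=Y_0'\oplus X_0'$. Because $\mathcal{X}(T_0)\cap\mathcal{Y}(T_0)=0$, the $\mathcal{Y}$- and $\mathcal{X}$-parts of the two sides must coincide. The quasi-inverse equivalences of the Tilting Theorem \ref{Th0}, combined with Lemma \ref{basic}, then yield $\mathsf{add}(X_0)=\mathsf{add}(X_0')$ and $\mathsf{add}(Y_0/Tr_{Y_0}X_0)=\mathsf{add}(Y_0'/Tr_{Y_0'}X_0')$; since all four of these modules are basic, I may assume $X_0=X_0'$, and for each indecomposable summand $Y_i$ of $Y_0$ there exists an indecomposable summand $Y_j'$ of $Y_0'$ together with an isomorphism $\nu\colon Y_i/Tr_{Y_i}X_0\xrightarrow{\sim}Y_j'/Tr_{Y_j'}X_0$. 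The core task is to promote $\nu$ to an isomorphism $Y_i\cong Y_j'$. Since $\Lambda$ is hereditary and $Y_0\oplus X_0$ is tilting, $\Ext^1_\Lambda(Y_i,X_0)=0$, and because $Tr_{Y_j'}X_0$ is a quotient of a direct sum of copies of $X_0$, the long exact sequence together with $\Ext^2_\Lambda=0$ forces $\Ext^1_\Lambda(Y_i,Tr_{Y_j'}X_0)=0$. Hence the map $\Hom_\Lambda(Y_i,Y_j')\twoheadrightarrow\Hom_\Lambda(Y_i,Y_j'/Tr_{Y_j'}X_0)$ is surjective, so $\nu$ lifts to some $k\colon Y_i\to Y_j'$; symmetrically, $\nu^{-1}$ lifts to $k'\colon Y_j'\to Y_i$.

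Every endomorphism of $Y_i$ preserves the characteristic submodule $Tr_{Y_i}X_0$, so there is a well-defined ring homomorphism $\End_\Lambda(Y_i)\to\End_\Lambda(Y_i/Tr_{Y_i}X_0)$ carrying $k'k$ to the identity. As $Y_i$ is indecomposable, $\End_\Lambda(Y_i)$ is local, and $k'k$ is either nilpotent or invertible; nilpotence would force $\id_{Y_i/Tr_{Y_i}X_0}=0$ under the quotient map, contradicting $Y_i/Tr_{Y_i}X_0\ne 0$ which is guaranteed by admissibility. Therefore $k'k$ is invertible, $k$ is a split monomorphism, and the indecomposability of $Y_j'$ forces $k$ to be an isomorphism. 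Interchanging the roles of $T$ and $T'$ completes the pairing and yields $\mathsf{add}(Y_0)=\mathsf{add}(Y_0')$. The main obstacle is precisely this lifting/local-endomorphism step: reconstructing each $Y_i$ from the quotient $Y_i/Tr_{Y_i}X_0$ and $X_0$ crucially needs both the heredity of $\Lambda$ (to kill the relevant $\Ext^1$) and the indecomposability of $Y_i$ (to exclude the nilpotent alternative).
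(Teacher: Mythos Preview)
Your proof is correct, but the injectivity argument takes a different route from the paper's. The paper argues globally: after identifying $X_0=X_0'=:X$ and $Y_0/Tr_{Y_0}X\cong Y_0'/Tr_{Y_0'}X$, it shows $\Ext^1_\Lambda(Y_0',Y_0)=0$ by sandwiching $Y_0$ in the sequence $0\to Tr_{Y_0}X\to Y_0\to Y_0/Tr_{Y_0}X\to 0$ and using that $\Ext^1_\Lambda(Y_0',-)$ vanishes on both ends (the left because $Tr_{Y_0}X\in\mathsf{Gen}\,X$ and $T'$ is tilting, the right via the corresponding sequence for $Y_0'$). By symmetry $\Ext^1_\Lambda(Y_0,Y_0')=0$ as well, so $X\oplus Y_0\oplus Y_0'$ is self-orthogonal and hence tilting by Remark~\ref{ad}; basicness of $X\oplus Y_0$ and $X\oplus Y_0'$ then forces $Y_0=Y_0'$.

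Your approach instead works summand by summand, lifting the isomorphism on quotients along the projection and using the local endomorphism ring of $Y_i$ to upgrade the lift to an isomorphism. This is essentially the same technique the paper employs in Lemma~\ref{basic} (where it is used within a single $Y_0$ to show basicness of the quotient), so you are in effect recycling that lemma's method rather than appealing to the tilting-module count. Your argument is a bit more explicit and makes clear exactly how $Y_i$ is reconstructed from $(X_0,\,Y_i/Tr_{Y_i}X_0)$; the paper's argument is shorter and exploits the rigidity of tilting modules more directly.
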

\begin{proof}
First, we need to show that $\Phi$ is well defined. Let $T=Y_0\oplus X_0$  be a tilting $\Lambda$-module such that $Y_0\in\mathcal{T}(T_0)$, $X_0\in\mathcal{F}(T_0)$. Since $T_0$ is admissible, Theorem \ref{perfect} implies $\Phi(T)$ is a tilting $B_0$-module. Suppose $T_I=Y_0^{(I)}\oplus X_0^{(I)}, T_J=Y_0^{(J)}\oplus X_0^{(J)}$ are two tilting $\Lambda$-modules satisfying $\mathsf{add} (T_I)=\mathsf{add}(T_J)$. It suffices to verify $\mathsf{add}(\Phi(T_I))=\mathsf{add}(\Phi(T_J))$. By the definition of $\Phi$ and the assumptions on $T_I,T_j$, this is obvious.

Without loss of generality, let $T_1=Y_1\oplus X_1\in\mathcal{T}_{\mathcal{T}\cup\mathcal{F}}$, $T_2=Y_2\oplus X_2\in\mathcal{T}_{\mathcal{T}\cup\mathcal{F}}$ be two basic tilting $\Lambda$-modules with $X_1,X_2\in\mathcal{F}(T_0)$ and $Y_1,Y_2\in\mathcal{T}(T_0)$, such that $\Phi(T_1)=\Phi(T_2)$. Then by the definition of $\Phi$, we have $\Ext^{1}_\Lambda(T_0,X_1)=\Ext^{1}_\Lambda(T_0,X_2)$. While the Tilting Theorem \ref{Th0} implies $X_1=X_2$, so we let 
$X\coloneqq X_1=X_2$.
Consider the following two short exact sequences:
$$\xymatrix@1{
0\ar[r]&Tr_{Y_1}X\ar[r]&Y_1\ar[r]&Y_1/Tr_{Y_1}X\ar[r]&0&(b_1)}$$

$$\xymatrix@1{
0\ar[r]&Tr_{Y_2}X\ar[r]&Y_2\ar[r]&Y_2/Tr_{Y_2}X\ar[r]&0.&(b_2)}$$
Since $\Phi(T_1)=\Phi(T_2)$, the definition of $\Phi$ implies $\Hom_\Lambda(T_0,Y_1/Tr_{Y_1}X)=\Hom_\Lambda(T_0,Y_2/Tr_{Y_2}X)$. 
Since torsion class is closed under taking factor modules, we have $Y_1/Tr_{Y_1}X,Y_2/Tr_{Y_2}X\in\mathcal{T}(T_0)$. Then the Tilting Theorem \ref{Th0} forces $Y_1/Tr_{Y_1}X=Y_2/Tr_{Y_2}X$. For convenience, let $Y/Tr_{Y}X\coloneqq Y_1/Tr_{Y_1}X=Y_2/Tr_{Y_2}X$.
Since $Tr_{Y_i}X$ is generated by $X$ and $T_2$ is tilting, $\Ext^{1}_\Lambda(Y_2, Tr_{Y_1}X)=0$.
Since $Y_2$ is partial tilting, $(b_2)$ and the right exactness of $\Ext^{1}_\Lambda(Y_2,-)$ force $\Ext^{1}_\Lambda(Y_2,Y/Tr_{Y}X)=0$. Then applying the functor $\Hom_\Lambda(Y_2,-)$ to $(b_1)$, we obtain $\Ext^{1}_\Lambda(Y_2,Y_1)=0$. Similarly, we have $\Ext^{1}_\Lambda(Y_1,Y_2)=0$. By the Remark before this Proposition, we know $X\oplus Y_1\oplus Y_2$ is tilting. But since $X\oplus Y_1$ and $X\oplus Y_2$ are basic tilting modules, this implies $Y_1=Y_2$, thus $\Phi$ is injective.
\end{proof}

A tilting module $T_A$ is called \textbf{BB-tilting}, provided $T_A$ is a classic tilting module of the form $T=P[i]\oplus\tau^{-1}S$ for some simple but non-injective $S$, and it is called \textbf{APR-tilting}, if moreover, $S$ is projective. From now on, we suppose $T_0=P[i]\oplus \tau^{-1}S$ is a BB-tilting $\Lambda$-module and $B_0=\End_\Lambda (T_0)$. In that case, we say $B_0$ is a \textbf{BB-tilted algebra}.

Part of the statements in the following Lemma are probably known, but since we could not find a suitable reference, we give a proof here.

\begin{LemmaS}\label{Simple}
Suppose $T_0=P[i]\oplus \tau^{-1}S$ is BB-tilting, $B_0=\End_\Lambda (T_0)$. Then the following statements hold:
\begin{enumerate}
\item $T_0$ is admissible;
\item $\Ext^{1}_\Lambda(T_0,S)$ is the unique indecomposable modules in $\cX(T_0)$;
\item $B_0$ is hereditary if and only if $T_0$ is APR-tilting. Moreover, ${\rm gl.dim }B_0=2$ if and only if $T_0$ is not APR tilting. 
\end{enumerate}
\end{LemmaS}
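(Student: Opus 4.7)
For (b), I would first describe $\cF(T_0)$ explicitly. A module $N$ lies in $\cF(T_0)$ precisely when $\Hom_\Lambda(P[i],N)=0$ and $\Hom_\Lambda(\tau^{-1}S,N)=0$. The first condition forces the support of $N$ to be concentrated at vertex $i$, and since $\Lambda$ is hereditary (so its quiver has no loop at $i$), $N$ must be a direct sum of copies of $S$. The second condition is then automatic on $\mathsf{add}\, S$, because the Auslander-Reiten formula gives $\Hom_\Lambda(\tau^{-1}S,S)\cong D\Ext^{1}_\Lambda(S,S)=0$. Hence $\cF(T_0)=\mathsf{add}\, S$, and the Tilting Theorem \ref{Th0} identifies $\cX(T_0)$ with $\mathsf{add}\,\Ext^{1}_\Lambda(T_0,S)$; this module is nonzero since $\Ext^{1}_\Lambda(\tau^{-1}S,S)\cong D\End_\Lambda(S)\ne 0$.

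For (a), let $T=Y_0\oplus X_0$ be a tilting $\Lambda$-module with $Y_0\in\cT(T_0)$ and $X_0\in\cF(T_0)$. By (b), $X_0\in\mathsf{add}\, S$. If an indecomposable summand $Y_j$ of $Y_0$ were generated by $X_0$, then $Y_j$ would be a quotient of some $S^n$, forcing $Y_j\cong S$. But this contradicts $\cT(T_0)\cap\cF(T_0)=0$.

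For (c), the plan is to first establish the bound ${\rm gl.dim}\, B_0\le 2$ in general, and then pinpoint exactly when equality holds. Since the torsion pair $(\cX(T_0),\cY(T_0))$ splits in $\text{mod }B_0$ by \cite[1.7]{KB}, every indecomposable $B_0$-module either lies in $\cY(T_0)$ or, by (b), is isomorphic to $X:=\Ext^{1}_\Lambda(T_0,S)$. For any $M\in\cT(T_0)$, applying $\Hom_\Lambda(T_0,-)$ to the short exact sequence of Lemma \ref{L0}(d) realises $\Hom_\Lambda(T_0,M)$ as the cokernel of a map between two projective $B_0$-modules, so ${\rm pd}_{B_0}\Hom_\Lambda(T_0,M)\le 1$. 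The projective cover construction from the proof of Lemma \ref{L4} yields a sequence $0\to \Hom_\Lambda(T_0,M')\to \Hom_\Lambda(T_0,\bar T)\to X\to 0$ with $M'\in\cT(T_0)$, so ${\rm pd}_{B_0}X\le 2$, and hence ${\rm gl.dim}\,B_0\le 2$ always.

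It remains to decide when ${\rm pd}_{B_0}X\le 1$. By Lemma \ref{L4}, $\Ext^{2}_{B_0}(X,\Hom_\Lambda(T_0,M))\cong \Ext^{1}_\Lambda(S,M)$ for $M\in\cT(T_0)$. If $S$ is projective (the APR case), this vanishes identically, giving $B_0$ hereditary. Otherwise take $M=\rad P(i)$: hereditariness makes $\rad P(i)$ projective, and taking $i$-th components of $0\to \rad P(i)\to P(i)\to S\to 0$ yields $(\rad P(i))_i=0$, so that $\rad P(i)\in \mathsf{add}\, P[i]\subset \mathsf{add}\, T_0\subset \cT(T_0)$. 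Applying $\Hom_\Lambda(-,\rad P(i))$ to the same sequence together with $\Hom_\Lambda(P(i),\rad P(i))=(\rad P(i))_i=0$ then gives $\Ext^{1}_\Lambda(S,\rad P(i))\cong \End_\Lambda(\rad P(i))\ne 0$. Consequently ${\rm pd}_{B_0}X=2$ and ${\rm gl.dim}\,B_0=2$. The main subtlety is combining the projective-cover presentation of $X$ borrowed from the proof of Lemma \ref{L4} (to bound ${\rm pd}_{B_0}X$ from above) with the explicit test module $\rad P(i)$ (to force the equality ${\rm pd}_{B_0}X=2$ in the non-APR case).
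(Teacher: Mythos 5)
Your arguments for (a) and (b) reach the same conclusions as the paper, and (a) is in fact cleaner: the paper contradicts $\Ext^1_\Lambda(M,S)=0$, while you observe directly that a quotient of $S^n$ is semisimple, forcing $Y_j\cong S\in\cT(T_0)\cap\cF(T_0)=0$. Your treatment of (b) is a fleshed-out version of the paper's one-line statement that $\Hom_\Lambda(P[i],X)=0$ pins down $\cF(T_0)=\mathsf{add}\,S$, with the extra justifications (no loop at $i$, and the hereditary AR formula $\Hom_\Lambda(\tau^{-1}S,S)\cong D\Ext^1_\Lambda(S,S)=0$) filled in.

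For (c) you take a genuinely different route. The paper argues by contradiction: assume ${\rm pd}_{B_0}S'=1$, take the minimal projective presentation, apply $-\otimes_{B_0}T_0$ to get $0\to S\to T'\to\tau^{-1}S\to 0$, and conclude $S$ is projective (using that $S'=\Ext^1_\Lambda(T_0,S)$ is a simple $B_0$-module to force $T''=\tau^{-1}S$). You instead compute $\Ext^2_{B_0}(S',\Hom_\Lambda(T_0,M))\cong\Ext^1_\Lambda(S,M)$ directly via Lemma \ref{L4}, and produce the concrete test module $M=\rad P(i)\in\mathsf{add}\,P[i]$ with $\Ext^1_\Lambda(S,\rad P(i))\cong\End_\Lambda(\rad P(i))\ne 0$ in the non-APR case. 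This is correct and arguably cleaner, as it avoids having to argue $S'$ is simple. Two details do need repair, though. First, applying $\Hom_\Lambda(T_0,-)$ to the sequence $0\to L\to\bar T\to M\to 0$ from Lemma \ref{L0}(d) does \emph{not} realise $\Hom_\Lambda(T_0,M)$ as the cokernel of a map between two projectives: $\Hom_\Lambda(T_0,L)$ is projective only if $L\in\mathsf{add}\,T_0$, which is not given. The fact ${\rm pd}_{B_0}\Hom_\Lambda(T_0,M)\le 1$ is true but should be cited from \cite[1.7]{KB} or \cite[Theorem 5.2]{HR}, exactly as the paper does. Second, in the APR direction the vanishing of $\Ext^2_{B_0}(S',\Hom_\Lambda(T_0,M))$ covers only targets in $\cY(T_0)$; to conclude ${\rm pd}_{B_0}S'\le 1$ you must also dispose of $\Ext^2_{B_0}(S',S')$, which follows since ${\rm id}_{B_0}S'\le{\rm id}_\Lambda S\le 1$ by \cite[1.7]{KB} and the hereditariness of $\Lambda$. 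With these two citations inserted, the argument is sound.
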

\begin{proof}
\begin{enumerate}[fullwidth,itemindent=2em]
\item 
According to the definition of admissible tilting modules, to show that $T_0$ is admissible, it suffices to show no indecomposable summand of $M$ is generated by $S$, where $T=S\oplus M$ is a tilting $\Lambda$-module with $M\in\mathcal{T}(T_0)$. If there exists some indecomposable summand $M_i$ of $M$ that is generated by S, we obtain an exact sequence $0\to Z\to S^{r}\to M_i \to 0$. But since $S$ is simple and $\Ext^{1}_\Lambda(M,S)=0$, this yields a contradiction.
\item 
For any $X\in\mathcal{F}(T_0)$, $\Hom_\Lambda(P[i],X)=0$, so $S$ is the unique indecomposable $\Lambda$-module in $\cF(T_0)$.   Then according to Theorem \ref{Th0}, $\Ext^{1}_\Lambda(T_0,S)$ is the unique indecomposable module in $\cX(T_0)$. 
\item
If $T_0$ is APR-tilting, according to \cite[Proposition 1.13]{APR}, $B_0$ is hereditary.
If $T_0$ is not APR-tilting, we claim ${\rm pd}_{B_0}\Ext^{1}_\Lambda(T_0,S)$ $=2$. According to \cite[Theorem 5.2]{HR}, ${\rm gl.dim} B_0\leq 2$. Since all projective $B_0$-modules are in $\mathcal{Y}(T_0)$, $\Ext^{1}_\Lambda(T_0,S)$ is not projective. If ${\rm pd}_{B_0}\Ext^{1}_\Lambda(T_0,S)=1$, we consider its minimal projective resolution:
$$\xymatrix@1{0\ar[r]& \Hom_\Lambda(T_0,T^{'})\ar[r]& \Hom_\Lambda(T_0,T^{''})\ar[r]& \Ext^{1}_\Lambda(T_0,S)\ar[r]& 0}$$
with $T^{'}, T^{''}\in \mathsf{add} T_0$. Applying the functor $-\otimes T_0$ to it, \cite[1.6]{KB} yields a short exact sequence
$$\xymatrix@1{0\ar[r]& S\ar[r]& T^{'}\ar[r]& T^{''}\ar[r]& 0.}$$
Lemma \ref{L3} forces $\Hom_{B_0}(\Hom_\Lambda(T_0,P[i]),\Ext^{1}_{\Lambda}(T_0,S))=0$. Since $B=\Hom_\Lambda(T_0,P[i]\oplus \tau^{-1}S)$, we know  $\Ext^{1}_\Lambda(T_0,S)$ is simple, and hence $T^{''}=\tau^{-1}S$.  By the minimal projective resolution, we know $\tau^{-1}S$ cannot be a direct summand of $T^{'}$. Then $T^{'}\in\mathsf{add}P[i]$, and hence is projective. Since $\Lambda$ is hereditary, $S$ is projective, contradicts our assumption that $T_0$ is not APR-tilting. So we have ${\rm pd}_{B_0}(\Ext^{1}_\Lambda(T_0,S))=2$ and ${\rm gl.dim }B_0=2$. According to \cite[1.7]{KB}, ${\rm pd}_{B_0}Y\leq 1$ for all $Y\in\cY(T_0)$, and since $(\cX(T_0),\cY(T_0))$ is splitting, we know $S^{'}=\Ext^{1}_\Lambda(T_0,S)$ is the unique indecomposable $B_0$-module of projective dimension 2.
\end{enumerate}
\end{proof}

According to Proposition \ref{injection}, for every BB-tilted algebra $B_0=\End_\Lambda(T_0)$, we have an injective map $\Phi:\mathcal{T}_{\mathcal{T}\cup\mathcal{F}}\to \mathcal{T}_{B_0}$.

\begin{PropositionS}\label{surjective}
When $T_0$ is a BB-tilting module and $B_0=\End_\Lambda(T_0)$, the map $\Phi:\mathcal{T}_{\mathcal{T}\cup\mathcal{F}}\to \mathcal{T}_{B_0}$ is a  bijection.
\end{PropositionS}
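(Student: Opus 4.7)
Injectivity is Proposition~\ref{injection}, so the task is to show $\Phi$ is surjective. The plan exploits that $\Lambda$ is hereditary: by \cite[1.7]{KB} the torsion pair $(\mathcal{X}(T_0),\mathcal{Y}(T_0))$ in $\text{mod }B_0$ is splitting, while Lemma~\ref{Simple}(b) singles out $S':=\Ext^{1}_\Lambda(T_0,S)$ as the only indecomposable in $\mathcal{X}(T_0)$. Any basic tilting $B_0$-module therefore takes the form $T'=Y'\oplus X'$ with $Y'\in\mathcal{Y}(T_0)$ and $X'\in\{0,S'\}$, and I will handle the two cases separately.

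When $X'=0$, the Tilting Theorem~\ref{Th0} furnishes $Y_0\in\mathcal{T}(T_0)$ with $n$ indecomposable summands and $T'\cong \Hom_\Lambda(T_0,Y_0)$; Lemma~\ref{L1} transports the self-orthogonality of $T'$ to $\Ext^{1}_\Lambda(Y_0,Y_0)=0$, so $Y_0$ is perfect exceptional over the hereditary algebra $\Lambda$ and hence a tilting $\Lambda$-module with $\Phi(Y_0)=T'$. The substantive case is $X'=S'$: writing $Y'=\bigoplus_{i=1}^{n-1}Y_i'$ with $Y_i'$ indecomposable, let $Z_i\in\mathcal{T}(T_0)$ be the unique indecomposable with $\Hom_\Lambda(T_0,Z_i)\cong Y_i'$. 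The candidate preimage is $T:=Y_0\oplus S$, where $Y_0:=\bigoplus_{i=1}^{n-1}Y_i$ and each $Y_i$ is defined by the universal extension
\[
0\to S^{r_i}\to Y_i\to Z_i\to 0,\qquad r_i:=\dim_k\Ext^{1}_\Lambda(Z_i,S),
\]
classified by the identity element of $\Ext^{1}_\Lambda(Z_i,S^{r_i})\cong\End_k(\Ext^{1}_\Lambda(Z_i,S))$.

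What remains is a verification that splits into three ingredients. First, applying $\Hom_\Lambda(-,S)$ to the universal extension makes the connecting map an isomorphism, yielding $\Hom_\Lambda(Y_i,S)=0=\Ext^{1}_\Lambda(Y_i,S)$, so $Y_i\in\mathcal{T}(T_0)$; combined with $\Hom_\Lambda(S,Z_i)=0$ (which, via Lemma~\ref{L4}, follows from $\Ext^{1}_{B_0}(S',Y_i')=0$) this identifies $Tr_{Y_i}S=S^{r_i}$, rules out $Y_i\cong S$, and via Lemma~\ref{L5}(c) forces the $Y_i$ to be indecomposable and pairwise non-isomorphic. Second, routine long-exact-sequence chases using $\Ext^{1}_\Lambda(Z_i,Z_j)=0$ (Lemma~\ref{L1}) together with the vanishings just produced give $\Ext^{1}_\Lambda(Y_i,Y_j)=0$, so $T=Y_0\oplus S$ is perfect exceptional with $n$ indecomposable summands and therefore tilting; admissibility (Lemma~\ref{Simple}(a)) places $T$ in $\mathcal{T}_{\mathcal{T}\cup\mathcal{F}}$, and by construction $\Phi(T)=\Hom_\Lambda(T_0,Z)\oplus S'=T'$. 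The step that I expect to demand genuine care is obtaining $\Ext^{1}_\Lambda(S,Y_i)=0$, which reduces to $\Ext^{1}_\Lambda(S,Z_i)=0$: here one must invoke Lemma~\ref{L4} with $j=1$ to identify this group with $\Ext^{2}_{B_0}(S',Y_i')$ and then exploit that $T'$ being tilting gives $\Ext^{k}_{B_0}(T',T')=0$ for \emph{all} $k\geq 1$, not merely $k=1$; it is precisely this second-order vanishing on the $B_0$-side that bootstraps the construction on the $\Lambda$-side.
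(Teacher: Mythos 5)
Your proposal follows essentially the same route as the paper's proof: exploit the splitting of $(\mathcal{X}(T_0),\mathcal{Y}(T_0))$ to reduce to $T'=S'\oplus Y'$, transport $Y'$ back to $Z=\bigoplus Z_i\in\mathcal{T}(T_0)$ via the Tilting Theorem, build the universal extensions $0\to S^{r_i}\to Y_i\to Z_i\to 0$, and then pull the required $\Ext$-vanishings across from $B_0$ to $\Lambda$ using Lemmas~\ref{L1}, \ref{L2}, \ref{L4}, the decisive one being $\Ext^2_{B_0}(S',Y_i')=0$ via Lemma~\ref{L4} with $j=1$, exactly as you flag. One imprecision: Lemma~\ref{L5}(c) yields only indecomposability of each $Y_i$, not that the $Y_i$ are pairwise non-isomorphic; the paper handles the latter by a separate Snake Lemma diagram chase showing that an isomorphism $Y_i\to Y_j$ together with $\Hom_\Lambda(S,Z_j)=0$ forces an isomorphism $Z_i\to Z_j$, hence $i=j$.
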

\begin{proof}
Without loss of generality, we assume that $T^{'}\in\mathcal{T}_{B_0}$ is a basic tilting $B_0$-module. Recall that $(\mathcal{X}(T_0),\mathcal{Y}(T_0))$ is splitting in mod $B_0$.

If $T^{'}\in\mathcal{Y}(T_0)$, by Theorem \ref{Th0}, there exists $T\in\mathcal{T}(T_0)$ such that $T^{'}=\Hom_\Lambda(T_0,T)$. It's easy to see that $T$ is tilting and $\Phi(T)=T^{'}$.

If $T^{'}=S^{'}\oplus N^{'}$ with $N^{'}\in\mathcal{Y}(T_0)$, $S^{'}=\Ext^{1}_\Lambda(T_0,S)$. Theorem \ref{Th0} implies there exists $N\in\mathcal{T}(T_0)$ such that $N^{'}=\Hom_\Lambda(T_0,N)$. By Lemma \ref{L4}, $\Ext^{1}_\Lambda(S,N)\cong \Ext^{2}_{B_0}(S^{'},N^{'})=0$ (\ding{182}) and $\Hom_\Lambda(S,N)\cong \Ext^{1}_{B_0}(S^{'},N^{'})=0$ (\ding{183}). By Lemma \ref{L2}, $\Ext^{1}_\Lambda(S,S)\cong \Ext^{1}_{B_0}(S^{'},S^{'})=0$ (\ding{184}) and Lemma \ref{L1} implies $\Ext^{1}_\Lambda(N,N)\cong \Ext^{1}_{B_0}(N^{'},N^{'})=0$. So if $\Ext^{1}_\Lambda(N,S)=0$, $S\oplus N$ is exceptional and according to the Tilting Theorem \ref{Th0}, we know it's perfect exceptional and hence tilting.  Moreover, it's not difficult to see that $\Phi(S\oplus N)=T^{'}$. 
Since $T^{'}=S^{'}\oplus N^{'}$ is basic tilting, let $N=\bigoplus^{n-1}_{i=1}N_i$ be a decomposition of $N$ into indecomposable summands and set $\dim_k\Ext^{1}_\Lambda(N_i,S)=:r_i$. 
Consider the following exact sequence 
$$\xymatrix@1{0\ar[r]&S^{r_i}\ar[r]& M_i\ar[r]& N_i\ar[r]& 0&(c_i)}$$
such that the connecting homomorphism $\delta_i:\Hom_\Lambda(S^{r_i},S)\to \Ext^{1}_\Lambda(N_i,S)$ is surjective.
Taking the direct sum of $(c_i)$, we obtain
$$\xymatrix@1{0\ar[r]& S^{r}\ar[r]& M\ar[r] &N\ar[r] &0&(c)}$$
and $M=\bigoplus^{n-1}_{i=1}M_i$.
The construction of $(c)$ implies the connecting homomorphism $\delta:\Hom_\Lambda(S^{r},S)\to \Ext^{1}_\Lambda(N,S)$ is surjective.
In view of $\Hom_\Lambda(N,S)=0$ and \ding{184}, application of the functor $\Hom_\Lambda(-,S)$ to $(c)$ yields the following exact sequence
$$\xymatrix@1{0\ar[r]& \Hom_\Lambda(M,S)\ar[r]& \Hom_\Lambda(S^{r},S)\ar[r]^{\delta}& \Ext^{1}_\Lambda(N,S)\ar[r]& \Ext^{1}_\Lambda(M,S)\ar[r] &0.}$$
Since $\delta$ is surjective and $\dim_k\Hom_\Lambda(S^{r},S)$ $=\dim_k\Ext^{1}_\Lambda(N,S)=r$, it follows that $\delta$ is an isomorphism and hence $\Ext^{1}_\Lambda(M,S)=\Hom_\Lambda(M,S)=0$ (\ding{185}). Since $\mathsf{add}S=\mathcal{F}(T_0)$ , $\Hom_\Lambda(M,S)=0$ implies $M\in\mathcal{T}(T_0)$.
Applying the functor $\Hom_\Lambda(T_0,-)$ to $(c)$ yields a short exact sequence:
$$\xymatrix{0\ar[r]&\Hom_\Lambda(T_0,M)\ar[r]&\Hom_\Lambda(T_0,N)\ar[r]& \Ext^{1}_{\Lambda}(T_0,S^{r})\ar[r]& 0.&(d)}$$
Applying the functor $\Hom_{B_0}(-,\Hom_\Lambda(T_0,N))$ to $(d)$, we obtain an  exact sequence 
$$\xymatrix@1{\Ext^{1}_{B_0}(N^{'},N^{'})\ar[r]& \Ext^{1}_{B_0}(\Hom_\Lambda(T_0,M),\Hom_\Lambda(T_0,N))\ar[r]& \Ext^{2}_{B_0}({S^{'}}^{r},N^{'}).}$$
Since $S^{'}\oplus N^{'}$ is tilting, this implies $\Ext^{1}_{B_0}(\Hom_\Lambda(T_0,M),\Hom_\Lambda(T_0,N))=0$. By  Lemma \ref{L1}, we have $\Ext^{1}_\Lambda(M,N)\cong \Ext^{1}_{B_0}(\Hom_\Lambda(T_0,M),\Hom_\Lambda(T_0,N))=0$ (\ding{186}). In view of \ding{182} and \ding{184}, application of  the functor $\Hom_\Lambda(S,-)$ to $(c)$ gives $\Ext^{1}_\Lambda(S,M)=0$.
Applying the functor $\Hom_\Lambda(M,-)$ to $(c)$, while observing \ding{185} and \ding{186}, we obtain $\Ext^{1}_\Lambda(M,M)=0$. Then \ding{185} implies that $S\oplus M$ is exceptional.
In view of \ding{183} and \ding{185}, Lemma \ref{L5} implies that each $M_i$ is indecomposable.
For any isomorphism $\phi:M_i\to M_j$, consider the following commutative diagram with exact rows:
$$\xymatrix{0\ar[r]&S^{r_i}\ar[r]^{f_i}\ar@{-->}[d]^{\varphi}&M_i\ar[r]^{g_i}\ar[d]^{\phi}&N_i\ar[r]\ar@{-->}[d]^{\psi}&0\\
0\ar[r]&S^{r_j}\ar[r]^{f_j}&M_j\ar[r]^{g_j}&N_j\ar[r]&0,\\
}$$
where the existence of $\psi$ is ensured by \ding{183}.
By the Snake Lemma, $\varphi$ is injective, $\psi$ is surjective and $ker(\psi)\cong coker(\varphi)$. Since $coker(\varphi)\in \mathsf{add} S$, \ding{183} implies $\psi$ is an isomorphism, i.e., $N_i\cong N_j$ and $i=j$. So  $M$ is basic and $S\oplus M$ is perfect exceptional an hence tilting. 
By \ding{183}, $(c)$ is the canonical sequence associated to the torsion pair $(\mathsf{Gen} S,\mathcal{F}(S))$. By the uniqueness of the canonical sequence and the definition of $\Phi$, we have $\Phi(S\oplus M)=T^{'}$.
\end {proof}

Let $T_1,T_2\in\mathcal{T}_A$, and recall a partial order $\leq$ on $\mathcal{T}_A$ is defined via: $T_1\leq T_2$ if and only if $\mathsf{T_1^{\perp}}\subset \mathsf{T_2^{\perp}}$. Any subset of $\mathcal{T}_A$ inherits this partial order. The tilting quiver $\overrightarrow{\mathcal{K}_{A}}$ is defined as follows: the vertices are elements of $\mathcal{T}_A$, and for two basic tilting $A$-modules, there exists an arrow from $T_1$ to $T_2$ if and only if $T_1=M\oplus X$, $T_2=M\oplus Y$ with $X,Y$ being indecomposable summands belonging to a non-split exact sequence $0\to X\to\overline{M}\to Y\to 0$ with $\overline{M}\in \mathsf{add} M$. For a subset $\mathcal{D}\subset \mathcal{T}_A$, $\overrightarrow{\mathcal{D}}$ denotes the full subquiver of $\overrightarrow{\mathcal{K}_{A}}$ consisting of the elements in $\mathcal{D}$.

In \cite{SL}, the author considered the relationship between $(\mathcal{T}_{\Lambda},\leq)$ and $(\mathcal{T}_{B_0},\leq)$ when $T_0$ is an APR-tilting $\Lambda$-module and $B_0=\End_\Lambda(T_0)$. In this section, we will consider the relationship between $\overrightarrow{\mathcal{K}_{\Lambda}}$ and $\overrightarrow{\mathcal{K}_{B_0}}$ when $T_0$ is BB-tilting and $B_0=\End_\Lambda(T_0)$.

Given a partially ordered set $(\mathcal{C},\leq)$, we can define a diagram which is called the \textbf{Hasse diagram} whose vertices are elements of $\mathcal{C}$. There exists an arrow $C_1\to C_2$ with $C_1,C_2\in\mathcal{C}$ if and only if 
$C_1$ is a minimal element of $\{D|C_2\leq D\}$. The following Theorem states the relationship between the tilting quiver $\overrightarrow{\mathcal{K}_A}$ and the Hasse diagram of $(\mathcal{T_A}, \leq)$.

\begin{TheoremS}\cite[Theorem 2.1]{HaUn2}\label{Hasse}
$\overrightarrow{\mathcal{K}_A}$ is the Hasse diagram of $(\mathcal{T}_A, \leq)$.
\end{TheoremS}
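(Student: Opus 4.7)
The plan is to establish both directions of the equality between arrows of $\overrightarrow{\mathcal{K}_A}$ and cover relations in $(\mathcal{T}_A,\leq)$. For the forward direction, suppose an arrow $T_1\to T_2$ in $\overrightarrow{\mathcal{K}_A}$ arises from a non-split exchange sequence $0\to X\to \overline{M}\to Y\to 0$ with $T_1=M\oplus X$, $T_2=M\oplus Y$ and $\overline{M}\in \mathsf{add} M$. Applying $\Hom_A(-,N)$ to this sequence for any $N\in \mathsf{T_2}^{\perp}$ yields vanishing of $\Ext^{i}_A(\overline{M},N)$ and $\Ext^{i}_A(Y,N)$ for $i\geq 1$ (both $\overline{M}$ and $Y$ belong to $\mathsf{add} T_2$), hence $\Ext^{i}_A(X,N)=0$, so that $N\in \mathsf{T_1}^{\perp}$. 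This proves $T_2\leq T_1$. Strictness follows by inspecting $N=X$: one has $X\in \mathsf{T_1}^{\perp}$ by self-orthogonality of $T_1$, while the connecting map $\Hom_A(Y,Y)\to \Ext^{1}_A(Y,X)$ sends $\mathrm{id}_Y$ to the class of the non-split extension, whence $X\notin \mathsf{T_2}^{\perp}$.

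The next step, which I expect to be the main obstacle, is to show that this strict inequality is a cover, i.e., that no tilting module $T$ satisfies $T_2< T< T_1$. Here I would appeal to the theory of complements of almost complete partial tilting modules: the common part $M$ is an almost complete partial tilting module, and $X$, $Y$ are two of its complements. The classification of such complements (see \cite{HaUn4}, \cite{CHU}) together with the invariance of the number of non-isomorphic indecomposable summands of a tilting module would allow one to pin down any hypothetical intermediate $T$: it must contain $M$ as a summand, and its remaining indecomposable complement must be forced, by perp-inclusions derived from the exchange sequence, to coincide with either $X$ or $Y$, contradicting $T_2<T<T_1$.

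For the converse, assume $T_1$ covers $T_2$ in $(\mathcal{T}_A,\leq)$. The plan is first to deduce that $T_1$ and $T_2$ share $n-1$ indecomposable summands: if they shared fewer, complement theory would produce a tilting module lying strictly between them in perp order, contradicting the covering property. Writing $T_1=M\oplus X$, $T_2=M\oplus Y$ with $X\not\cong Y$, I would construct the sought exchange sequence by forming a minimal right $\mathsf{add} M$-approximation $\overline{M}\to Y$. Its kernel $X'$ produces a short exact sequence $0\to X'\to \overline{M}\to Y\to 0$; self-orthogonality and perp-comparisons then force $M\oplus X'$ to be tilting with $M\oplus X'\leq T_1$ and $M\oplus X'>T_2$, and by the covering hypothesis $M\oplus X'=T_1$, i.e., $X'\cong X$. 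This yields the desired arrow $T_1\to T_2$ in $\overrightarrow{\mathcal{K}_A}$.
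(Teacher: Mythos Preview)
The paper does not prove this statement at all: Theorem~\ref{Hasse} is simply quoted from \cite[Theorem 2.1]{HaUn2} and used as a black box, so there is no ``paper's own proof'' to compare your proposal against.

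That said, your sketch follows the strategy of the original Happel--Unger argument fairly closely. A few remarks on the details you would need to fill in if you actually want a self-contained proof. In the forward direction, your verification of $T_2\leq T_1$ and of strictness is fine. For the covering property, the step ``any intermediate $T$ must contain $M$ as a summand'' is not automatic: you need to argue that $T_2\leq T\leq T_1$ forces $\Ext^i_A(T,M)=\Ext^i_A(M,T)=0$ for $i\geq 1$, whence $\mathsf{add}M\subset\mathsf{add}T$ by the Remark before Proposition~\ref{injection}; then $T=M\oplus Z$ for some indecomposable complement $Z$, and the linear order on complements from \cite{HaUn4}, \cite{CHU} finishes the job. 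In the converse direction, the claim that a covering pair must share $n-1$ indecomposable summands is the genuinely delicate point; in Happel--Unger this is obtained by iterating left $\mathsf{add}T_1$-approximations of $T_2$ (or dually) to produce a chain in $(\mathcal{T}_A,\leq)$ between $T_2$ and $T_1$, and your sketch (``complement theory would produce a tilting module strictly between them'') is too vague to count as a proof without this construction.
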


\begin{LemmaS}\label{middle}
Let
$\xymatrix@1{0\ar[r]&X\ar[r]^-{\begin{psmallmatrix}
f_1\\
f_2
\end{psmallmatrix}}&Y\oplus Y^{'}\ar[r]^-{\begin{psmallmatrix}
g_1& g_2
\end{psmallmatrix}}&Z\ar[r]&0
}$
 be a non-split short exact sequence. If $Z$ is indecomposable and $\Hom_A(X,Y^{'})=0$, then $Y^{'}=0$.
\end{LemmaS}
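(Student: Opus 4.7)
The plan is to use the hypothesis $\Hom_A(X,Y')=0$ to force the component $f_2:X\to Y'$ to vanish, after which the short exact sequence decomposes as a direct sum and indecomposability of $Z$ pins down $Y'$.

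First, since $f_2\in\Hom_A(X,Y')=0$, we have $f_2=0$, so the embedding is $\binom{f_1}{0}:X\to Y\oplus Y'$. In particular $f_1$ is a monomorphism (because $\binom{f_1}{0}$ is one). Taking cokernels yields
$$Z\;\cong\;\coker\!\binom{f_1}{0}\;\cong\;(Y/\im f_1)\,\oplus\,Y'.$$

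Next I invoke the indecomposability of $Z$. The decomposition above forces either $Y/\im f_1=0$ or $Y'=0$. In the latter case we are done. In the former case, $f_1$ is an epimorphism as well as a monomorphism, hence an isomorphism; but then the original exact sequence splits into the direct sum of the trivial sequence $0\to X\xrightarrow{f_1}Y\to 0\to 0$ and $0\to 0\to Y'\xrightarrow{\id}Y'\to 0$, contradicting the hypothesis that it is non-split. So only $Y'=0$ remains.

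This is essentially the entire argument; there is no real obstacle since the assumption $\Hom_A(X,Y')=0$ immediately reduces the situation to a direct sum decomposition, and indecomposability of $Z$ together with non-splitness of the sequence excludes the degenerate case. The only care needed is to verify that the case $Y/\im f_1=0$ really yields a splitting, which is clear from writing the sequence componentwise.
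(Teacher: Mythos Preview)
Your proof is correct and considerably more direct than the paper's. The paper also begins with $f_2=0$, but then instead of immediately computing the cokernel it factors the map $g_1\begin{psmallmatrix}\id_Y & 0\end{psmallmatrix}$ through $Z$ to obtain an endomorphism $h:Z\to Z$, uses indecomposability of $Z$ (via locality of $\End_A(Z)$) to conclude that $h$ is nilpotent or invertible, and treats each case separately with a diagram chase and the Snake Lemma. Your argument bypasses all of this: once $f_2=0$, the cokernel visibly decomposes as $(Y/\im f_1)\oplus Y'$, and indecomposability of $Z$ together with non-splitness finishes the proof in one line. The paper's approach has the flavour of Lemma~\ref{L5} (lifting endomorphisms along an exact sequence), which perhaps explains the choice, but your computation of the cokernel is both shorter and more transparent, and it uses only the definition of indecomposability rather than the stronger fact that $\End_A(Z)$ is local.
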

\begin{proof}
Since $\Hom_A(X,Y^{'})=0$, we have $f_2=0$. Since $ker\begin{psmallmatrix}
g_1& g_2
\end{psmallmatrix}=Im\begin{psmallmatrix}
f_1\\
0
\end{psmallmatrix}\subset ker g_1\begin{psmallmatrix}
id_Y& 0
\end{psmallmatrix} $, 
there exists $h:Z\to Z$ such that the following diagram with exact rows commutes 
$$\xymatrix{
0\ar[r]&X\ar[r]^-{\begin{psmallmatrix}
f_1\\
0
\end{psmallmatrix}}\ar[d]_{id_X}&Y\oplus Y^{'}\ar[r]^-{\begin{psmallmatrix}
g_1& g_2
\end{psmallmatrix}}\ar[d]^-{\begin{psmallmatrix}
id_Y& 0
\end{psmallmatrix}}&Z\ar[r]\ar[d]^{h}&0\\
0\ar[r]&X\ar[r]^{f_1}&Y\ar[r]^{g_1}&Z.
}$$
Since $Z$ is indecomposable, $h$ is either nilpotent or invertible.
Since  $h\begin{psmallmatrix}
g_1 & g_2
\end{psmallmatrix}=g_1\begin{psmallmatrix}
id_Y & 0
\end{psmallmatrix}=
\begin{psmallmatrix}
g_1 & g_2
\end{psmallmatrix}\begin{psmallmatrix}
id_Y&0\\
0&0
\end{psmallmatrix}
$, $h^{i}\begin{psmallmatrix}
g_1 & g_2
\end{psmallmatrix}=\begin{psmallmatrix}
g_1 & g_2
\end{psmallmatrix}\begin{psmallmatrix}
id_Y&0\\
0&0
\end{psmallmatrix}^{i}=
\begin{psmallmatrix}
g_1 & 0
\end{psmallmatrix}
$ for $i\geq 1$.
If $h$ is nilpotent, then $g_1=0$ which implies $f_1$ is an isomorphism, so that the original sequence splits, a contradiction. If $h$ is invertible, then $h$ is injective and then the Snake Lemma implies $\begin{psmallmatrix}
id_Y&0
\end{psmallmatrix}$ is injective. Thus, $Y^{'}\subset ker\begin{psmallmatrix}
id_Y&0
\end{psmallmatrix}=(0)$, as desired.

\end{proof}

Recall that $\Lambda$ is hereditary and $T_0$ is a tilting $\Lambda$-module. As mentioned in the introduction $\mathcal{T}_{\mathcal{T}}\subset\mathcal{T}_\Lambda$ is the subset of $\mathcal{T}_\Lambda$ consisting of those equivalence classes whose representatives belong to $\cT(T_0)$, $\mathcal{T}_{\mathcal{F}}\subset\mathcal{T}_\Lambda$ is the subset of $\mathcal{T}_\Lambda$ consisting of those equivalence classes whose representatives belong to $\mathcal{F}(T_0)$.  
$\mathcal{T}_{\mathcal{T}\cup\mathcal{F}}$ is the subset of $\mathcal{T}_\Lambda$ consisting of those equivalence classes whose representatives have indecomposable direct summands only in $\mathcal{T}(T_0)\cup \mathcal{F}(T_0)$ 
and $\mathcal{T}_{\mathcal{T},\mathcal{F}}\subset\mathcal{T}_\Lambda$ is the subset of $\mathcal{T}_\Lambda$ consisting of elements in $\mathcal{T}_{\mathcal{T}\cup\mathcal{F}}$ but not in the union of $\mathcal{T}_{\mathcal{T}}$ and $\mathcal{T}_{\mathcal{F}}$. Similarly, we have $\mathcal{T}_{\mathcal{Y}},\mathcal{T}_{\mathcal{X}},\mathcal{T}_{\mathcal{X}\cup\mathcal{Y}}\subset\mathcal{T}_{B_0}$ and $\mathcal{T}_{\mathcal{X},\mathcal{Y}}\subset\mathcal{T}_{B_0}$.

Note that, when $T_0$ is BB-tilting, according to Lemma \ref{Simple}, $\mathcal{F}(T_0)=\mathsf{add}S$ and $\mathcal{X}(T_0)=\mathsf{add}(S^{'})$. Since $(\cX(T_0),\cY(T_0))$ is splitting (\cite[1.7]{KB}), $\mathcal{T}_{\mathcal{X},\mathcal{Y}}$ is just the subset of $\mathcal{T}_{B_0}$ consisting of those equivalence classes whose representatives contain $S^{'}$ as a summand. But  since $(\cT(T_0),\cF(T_0))$ may be not splitting, $\mathcal{T}_{\mathcal{T},\mathcal{F}}$ is contained in the subset of $\mathcal{T}_\Lambda$ consisting of those equivalence classes whose representatives contain $S$ as a summand.

For a quiver $\Gamma$, $\Gamma_0$ means the set of vertices of $\Gamma$.
A full subquiver $\Delta$ of a quiver $\Gamma$ is defined to be \textbf{convex} in $\Gamma$ if, for any path $x_0\to x_1\to\cdots\to x_t$ in $\Gamma$ with $x_0,x_t\in \Delta_0$, we have $x_i\in\Delta_0$ for all $i$ such that $0\leq i\leq t$.

\begin{PropositionS}\label{convex}
When $T_0$ is a BB-tilting module,
$\overrightarrow{\mathcal{T}_{\mathcal{T},\mathcal{F}}}$ is convex in $\overrightarrow{\mathcal{K}_\Lambda}$, 
$\overrightarrow{{\mathcal{T}_{\mathcal{X},\mathcal{Y}}}}$ is convex in $\overrightarrow{\mathcal{K}_{B_0}}$.
\end{PropositionS}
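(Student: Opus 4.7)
My plan rests on Theorem~\ref{Hasse}, which identifies $\overrightarrow{\mathcal{K}_\Lambda}$ with the Hasse diagram of $(\mathcal{T}_\Lambda, \leq)$ (and analogously for $\overrightarrow{\mathcal{K}_{B_0}}$). Since any path in a Hasse diagram is a descending chain in the underlying poset, a convex subset of the poset yields a convex subquiver of its Hasse diagram. It thus suffices to prove that $\mathcal{T}_{\mathcal{T},\mathcal{F}}$ is convex in $(\mathcal{T}_\Lambda, \leq)$ and, parallelly, that $\mathcal{T}_{\mathcal{X},\mathcal{Y}}$ is convex in $(\mathcal{T}_{B_0}, \leq)$.

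For $\mathcal{T}_{\mathcal{T},\mathcal{F}}$, fix a chain $T_1 \leq T_2 \leq T_3$ with $T_1, T_3 \in \mathcal{T}_{\mathcal{T},\mathcal{F}}$. By Lemma~\ref{Simple}(b), $\mathcal{F}(T_0) = \mathsf{add}(S)$, and via the AR identity $\Ext^1_\Lambda(\tau^{-1}S, X) \cong D\Hom_\Lambda(X, S)$, a basic tilting $T$ lies in $\mathcal{T}_{\mathcal{T},\mathcal{F}}$ iff $S \in \mathsf{add}(T)$ and every other indecomposable summand $X$ satisfies $\Hom_\Lambda(X, S) = 0$. First I would show $S \in \mathsf{add}(T_2)$: the summand property of $T_1$ gives $S \in T_1^\perp \subset T_2^\perp = \mathcal{T}(T_2)$, while that of $T_3$ makes $S$ an Ext-projective in $\mathcal{T}(T_3) = T_3^\perp$; since $\mathcal{T}(T_2) \subset \mathcal{T}(T_3)$, this Ext-projectivity transfers to $\mathcal{T}(T_2)$, and Lemma~\ref{L0} then identifies Ext-projectives in $\mathcal{T}(T_2)$ with $\mathsf{add}(T_2)$.

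Writing $T_2 = S \oplus M_2$, the second step is to show $M_2 \in \mathcal{T}(T_0)$. Suppose some indecomposable summand $X$ of $M_2$ admits a nonzero map to $S$. Since $X \in T_2^\perp \subset T_3^\perp = \mathsf{Gen}(S \oplus M_3)$, there is a surjection $S^a \oplus M_3^b \twoheadrightarrow X$. Because $\mathcal{T}(T_0)$ is closed under quotients and $M_3 \in \mathcal{T}(T_0)$, the $M_3^b$-component has image in $\mathcal{T}(T_0)$; if this component alone were surjective then $X \in \mathcal{T}(T_0)$, contradicting $\Hom_\Lambda(X, S) \neq 0$. Hence $\Hom_\Lambda(S, X) \neq 0$ as well. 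Combining $\Hom_\Lambda(S, X) \neq 0$ and $\Hom_\Lambda(X, S) \neq 0$ with the exceptionality of $X$, the composition $S \hookrightarrow X \twoheadrightarrow S$ is either an isomorphism---forcing $X = S$ by indecomposability, contradicting $X$ being a summand of $M_2$ in the basic $T_2 = S \oplus M_2$---or zero, the latter to be excluded by examining the canonical torsion sequence of $X$ for $(\mathcal{T}(T_0), \mathcal{F}(T_0))$ together with the vanishings $\Ext^1_\Lambda(X, S) = 0 = \Ext^1_\Lambda(S, X)$ coming from $X, S$ both being summands of the tilting module $T_2$.

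The convexity of $\mathcal{T}_{\mathcal{X},\mathcal{Y}}$ in $(\mathcal{T}_{B_0}, \leq)$ is handled by the same pattern with $S' = \Ext^1_\Lambda(T_0, S)$ (the unique indecomposable of $\mathcal{X}(T_0)$ by Lemma~\ref{Simple}(b)) playing the role of $S$; some extra care is needed because $B_0$ may have global dimension two (Lemma~\ref{Simple}(c)), so tilting $B_0$-modules need not be classic, but the Ext-projectivity and $\mathsf{Gen}$-characterisations still pass through. The main obstacle I anticipate is the second step above: since the torsion pair $(\mathcal{T}(T_0), \mathcal{F}(T_0))$ need not split, straddling indecomposables---those with nontrivial Hom in both directions relative to $S$---could a priori appear as summands of $T_2$, and ruling them out cleanly requires using both chains $T_1 \leq T_2$ and $T_2 \leq T_3$ together with the exceptional structure of summands of classic tilting modules.
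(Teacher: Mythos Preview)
Your reduction via Theorem~\ref{Hasse} to poset convexity is sound, and for $\mathcal{T}_{\mathcal{X},\mathcal{Y}}$ your first step alone already finishes the job: since $(\mathcal{X}(T_0),\mathcal{Y}(T_0))$ splits, a basic tilting $B_0$-module lies in $\mathcal{T}_{\mathcal{X},\mathcal{Y}}$ if and only if $S'$ is a direct summand, so once you have shown $S'\in\mathsf{add}(T')$ from the two $\perp$-inclusions you are done. This is exactly the paper's argument for that half. Your ``second step'' for the $B_0$-side is therefore unnecessary.

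The genuine problem is the second step on the $\Lambda$-side. You correctly derive $\Hom_\Lambda(S,X)\neq 0$ from $T_2\leq T_3$, but your plan to exclude the case $gf=0$ does not go through. Concretely: if every $g\colon X\to S$ kills $f(S)$, then $f(S)$ lies in the torsion part $tX\in\mathcal{T}(T_0)$ of the canonical sequence $0\to tX\to X\to S^{r}\to 0$. From $\Ext^{1}_\Lambda(X,S)=0=\Ext^{1}_\Lambda(S,X)$ you get $\Hom_\Lambda(X,S)\cong k^{r}$ and $\Ext^{1}_\Lambda(tX,S)=0$, but you do \emph{not} get $\Ext^{1}_\Lambda(S,tX)=0$; the long exact sequence only gives a surjection $k^{r}\twoheadrightarrow\Ext^{1}_\Lambda(S,tX)$. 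So the canonical sequence need not split, $X$ need not decompose, and there is no contradiction. Nothing you have extracted from $T_1\leq T_2$ so far helps either: $T_1\leq T_2$ only yields $\Ext^{1}_\Lambda(X,T_1)=0$, which puts no constraint on $\Hom_\Lambda(X,S)$. In short, the purely order-theoretic information $T_1\leq T_2\leq T_3$ does not seem to be enough to force $M_2\in\mathcal{T}(T_0)$ in one stroke.

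The paper avoids this difficulty by \emph{not} working at the poset level for this half. Instead it inducts on the length of a path $T_1\to T_2\to\cdots\to T_{m+1}$ in $\overrightarrow{\mathcal{K}_\Lambda}$ and analyses a single arrow $T_1\to T_2$ at a time. Writing $T_1=M\oplus X$, $T_2=M\oplus Y$ with the associated non-split sequence $0\to X\to\overline{M}\to Y\to 0$, one first sees (as you do) that $S$ is a common summand, then argues that $S$ must lie in $M$ rather than equal $X$, and finally invokes Lemma~\ref{middle}: since $\Hom_\Lambda(X,S)=0$ (because $X\in\mathcal{T}(T_0)$), the summand $S$ cannot appear in $\overline{M}$, so $\overline{M}\in\mathcal{T}(T_0)$ and hence $Y\in\mathcal{T}(T_0)$. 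The point is that the arrow datum gives you the exact sequence $0\to X\to\overline{M}\to Y\to 0$ with $\overline{M}\in\mathsf{add}\,M$, which is precisely the leverage your poset-only argument lacks.
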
 
\begin{proof}
For convenience, in this case we will write $\mathcal{T}_S$ and $\mathcal{T}_{S^{'}}$ instead of $\mathcal{T}_{\mathcal{T},\mathcal{F}}$ and $\mathcal{T}_{\mathcal{X},\mathcal{Y}}$, respectively.
Suppose there exists a path $T^{'}_1\to\cdots\to T^{'}\to\cdots\to T^{'}_2$ in $\overrightarrow{\mathcal{K}_{B_0}}$ such that $T_1^{'}, T_2^{'}\in\mathcal{T}_{S^{'}}$, i.e., $S^{'}$ is a summand of $T_1^{'}$ and $T^{'}_2$. Hence we can assume $T_1^{'},T_2^{'}$ are two basic tilting $B_0$-modules of the form: $T_1^{'}=S^{'}\oplus M^{'}_1,T_2^{'}=S^{'}\oplus M^{'}_2$ . By Theorem \ref{Hasse}, we know that $T^{'}_2\leq T^{'} \leq T^{'}_1$, i.e., $\mathsf{(S^{'}\oplus M_2^{'})^{\perp}}\subset \mathsf{(T^{'})^{\perp}}\subset \mathsf{(S^{'}\oplus M^{'}_1)^{\perp}}$, whence $\Ext^{i}_{B_0}(T^{'},S^{'})=\Ext^{i}_{B_0}(S^{'},T^{'})=0$ for $i\geq 1$. So $S^{'}\oplus T^{'}$ is exceptional, then the Remark before Proposition \ref{injection} implies $\mathsf{add}(T^{'})=\mathsf{add}(S^{'}\oplus T^{'})$. Consequently, $S^{'}$ is a direct summand of $T^{'}$, so $T^{'}\in\mathcal{T}_S^{'}$ and hence $\overrightarrow{\mathcal{T}_{S^{'}}}$ is convex in $\overrightarrow{\mathcal{K}_{B_0}}$.

Suppose there exists a path: $T_1\to\cdots\to T\to\cdots\to T_n$ in $\overrightarrow{\mathcal{K}_{\Lambda}}$ such that $T_1,T_n\in\mathcal{T}_S$. We will prove our statement by induction on the length $n-1$. If $n=2$, there is nothing to show. We assume our statement to be true for $n=m\geq 2$. Suppose there exists a path : $T_1\to T_2\cdots\to T\to\cdots\to T_{m+1}$ in $\overrightarrow{\mathcal{K}_{\Lambda}}$ with $T_1,T_{m+1}\in\mathcal{T}_S$. According to the definition of the tilting quiver, we may assume $T_1=M\oplus X$, $T_2=M\oplus Y$ are two basic tilting modules with $X,Y$ being indecomposable, and we have a non-split exact sequence 
$$\xymatrix@1{ 0\ar[r]& X\ar[r]^{f}&\overline{M}\ar[r]^{g}& Y\ar[r]& 0&(\star)}$$
such that $\overline{M}\in \mathsf{add} M$. Since $T_1,T_{m+1}\in\mathcal{T}_S$ and $S$ is the unique indecomposable module in $\mathcal{F}(T_0)$, $S$ is a summand of $T_1$ and $T_{m+1}$. By the arguments of the first part of the proof, we know $S$ is a direct summand of $T_2$. Again, since $T_1\in\mathcal{T}_S$ and $T_1$ is basic, either $X=S$ or $S$ is a summand of $M$. If $X=S$, the existence of the non-split sequence $(\star)$ implies $\Ext^{1}_\Lambda(Y,S)\ne0$. But since $S$ is a direct summand of $T_2$ and $T_2=M\oplus Y$, the self-orthogonal property of tilting modules implies $\Ext^{1}_\Lambda(Y,S)=0$, a contradition. Consequently, $S$ is a direct summand of $M$. By the definition of $\mathcal{T}_S$ and the fact that $S$ is the unique indecomposable module in $\mathcal{F}(T_0)$, $T_1\in\mathcal{T}_S$ implies $S$ is the unique indecomposable summand of $T_1$ that does not belong to $\mathcal{T}(T_0)$. Hence $X\in\mathcal{T}(T_0)$, so that $S\in\mathcal{F}(T_0)$ implies $\Hom_\Lambda(X,S)=0$. Since $Y$ is indecomposable, Lemma \ref{middle} implies $S$ is not a summand of $\overline{M}$. But $T_1=M\oplus X\in\mathcal{T}_S$ and $S$ is the unique indecomposable module in $\mathcal{F}(T_0)$, so $\overline{M}\in\cT(T_0)$. Since $\mathcal{T}(T_0)$ is closed under taking factors modules, we obtain $Y\in\cT(T_0)$. From the definition of $\mathcal{T}_S$, we now obtain $T_2\in\mathcal{T}_S$ and the induction hypothesis shows that the path: $T_2\cdots\to T\to\cdots\to T_{m+1}$ belongs to $\overrightarrow{\mathcal{T}_S}$. Consequently, we have proved our statement.
\end{proof}

Note that, when $T\in\mathcal{T}(T_0)$ is a tilting $\Lambda$-module, the map $\Phi$ maps $T$ to $\Hom_\Lambda(T_0,T)$. Suppose that $T^{'}\in\mathcal{Y}(T_0)$ is a tilting $B_0$-module. It's easy to check that $T^{'}\otimes_{B_0} T_0$ is a tilting $\Lambda$-module such that $\Phi(T^{'}\otimes_{B_0}T_0)=T^{'}$. Then by Propositions \ref{injection}, \ref{Simple}, \ref{surjective}, when $T_0$ is BB-tilting, $\Phi_0=\Phi |_{\mathcal{T}_{\mathcal{T}}}:\mathcal{T}_{\mathcal{T}}\to \mathcal{T}_{\mathcal{Y}}$ and $\Phi_2=\Phi |_{\mathcal{T}_{\mathcal{T},\mathcal{F}}}:\mathcal{T}_{\mathcal{T},\mathcal{F}}\to \mathcal{T}_{\mathcal{X},\mathcal{Y}}$ are bijections. Obviously, $\Phi_0$ is a bijection of partially ordered sets and by \cite[Theorem2.7]{HaUn1}, we know $\Phi_2$ is also a bijection of partially ordered sets.

We are now in a position to prove Theorem \ref{The 2}.

\begin{TheoremS}\label{quiveriso}
When $T_0$ is a BB-tilting module, $\Phi_0$, $\Phi_2$  induce isomorphisms of quivers between $\overrightarrow{\mathcal{T}_{\mathcal{T}}}$ and $\overrightarrow{\mathcal{T}_{\mathcal{Y}}}$, $\overrightarrow{\mathcal{T}_{\mathcal{T},\mathcal{F}}}$ and $\overrightarrow{\mathcal{T}_{\mathcal{X},\mathcal{Y}}}$, respectively.
\end{TheoremS}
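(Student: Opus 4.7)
The plan is to establish, for each of $\Phi_0$ and $\Phi_2$, that arrows in the source quiver correspond bijectively to arrows in the target quiver. Since both maps are already bijections of posets, this will upgrade them to quiver isomorphisms.

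For $\Phi_0$, an arrow $T_1\to T_2$ in $\overrightarrow{\mathcal{T}_{\mathcal{T}}}$ arises from a non-split short exact sequence
\[ 0\to X\to\overline{M}\to Y\to 0 \]
with $T_1 = M\oplus X$, $T_2 = M\oplus Y$ and $\overline{M}\in\mathsf{add}\,M$. Because $T_1,T_2\in \mathcal{T}_{\mathcal{T}}$, every summand, and in particular $X$, lies in $\mathcal{T}(T_0)$, so $\Ext^{1}_\Lambda(T_0,X) = 0$ and applying $\Hom_\Lambda(T_0,-)$ yields a short exact sequence
\[ 0\to \Hom_\Lambda(T_0,X)\to \Hom_\Lambda(T_0,\overline{M})\to \Hom_\Lambda(T_0,Y)\to 0 \]
in mod $B_0$. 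By the Tilting Theorem \ref{Th0} this is an exchange sequence certifying an arrow $\Phi_0(T_1)\to \Phi_0(T_2)$ in $\overrightarrow{\mathcal{T}_{\mathcal{Y}}}$. For the reverse direction, I apply the quasi-inverse $-\otimes_{B_0} T_0$ to the exchange sequence of an arrow in $\overrightarrow{\mathcal{T}_{\mathcal{Y}}}$, which lifts it to an exchange sequence in $\mathcal{T}(T_0)$ and hence to an arrow in $\overrightarrow{\mathcal{T}_{\mathcal{T}}}$.

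For $\Phi_2$, first I analyze an arrow $T_1\to T_2$ in $\overrightarrow{\mathcal{T}_{\mathcal{T},\mathcal{F}}}$. Arguing as in the proof of Proposition \ref{convex}, $S$ must belong to the common part, so $T_1 = M'\oplus S\oplus X$, $T_2 = M'\oplus S\oplus Y$ with $X, Y\in \mathcal{T}(T_0)$; applying $\Hom_\Lambda(-,S)$ to the exchange sequence and using $\Hom_\Lambda(X,S) = \Hom_\Lambda(Y,S) = 0$ (since $\mathcal{T}(T_0) = \{N : \Hom_\Lambda(N,S) = 0\}$ in the BB-setting) forces $\Hom_\Lambda(\overline{M},S) = 0$, whence $\overline{M}\in\mathsf{add}\,M'$ contains no copy of $S$. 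Because $S\oplus X$ and $S\oplus Y$ are summands of the tilting modules $T_1, T_2$, one has $\Ext^{1}_\Lambda(S,X) = \Ext^{1}_\Lambda(S,Y) = 0$; applying $\Hom_\Lambda(S,-)$ to $0\to X\to\overline{M}\to Y\to 0$ then shows that $\Hom_\Lambda(S,\overline{M})\to\Hom_\Lambda(S,Y)$ is surjective, which gives the short exactness of the trace sequence
\[ 0\to Tr_X S\to Tr_{\overline{M}} S\to Tr_Y S\to 0. \]
A snake-lemma argument now produces an induced short exact sequence
\[ 0\to X/Tr_X S\to \overline{M}/Tr_{\overline{M}} S\to Y/Tr_Y S\to 0 \]
with all three terms in $\mathcal{T}(T_0)$, and applying $\Hom_\Lambda(T_0,-)$ yields a short exact sequence in mod $B_0$ whose middle term lies in $\mathsf{add}\bigl(\Hom_\Lambda(T_0, M'/Tr_{M'} S)\bigr)$. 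Together with the common summand $\Hom_\Lambda(T_0,M'/Tr_{M'} S)\oplus \Ext^{1}_\Lambda(T_0,S)$ of $\Phi_2(T_1)$ and $\Phi_2(T_2)$, this exhibits an arrow $\Phi_2(T_1)\to\Phi_2(T_2)$ in $\overrightarrow{\mathcal{T}_{\mathcal{X},\mathcal{Y}}}$; Lemma \ref{basic} combined with the Tilting Theorem guarantees the indecomposability of $\Hom_\Lambda(T_0,X/Tr_X S)$ and $\Hom_\Lambda(T_0,Y/Tr_Y S)$. For the converse, a parallel analysis based on the splitting of $(\mathcal{X}(T_0),\mathcal{Y}(T_0))$ reduces an arrow in $\overrightarrow{\mathcal{T}_{\mathcal{X},\mathcal{Y}}}$ to an exchange sequence in $\mathcal{Y}(T_0)$, which lifts along the Tilting Theorem to $\mathcal{T}(T_0)$, and then the bijection $\Phi_2^{-1}$ of Proposition \ref{surjective} produces the corresponding arrow in $\overrightarrow{\mathcal{T}_{\mathcal{T},\mathcal{F}}}$.

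The main obstacle lies in the $\Phi_2$ direction: one has to justify that the trace sequence is short exact, which rests on the partial-tilting self-orthogonality $\Ext^{1}_\Lambda(S,X) = 0$ together with the identification $Tr_X S = X\cap Tr_{\overline{M}} S$, and then to carefully match the derived $B_0$-sequence with the exchange data dictated by $\Phi_2$ on the common summands of $\Phi_2(T_1)$ and $\Phi_2(T_2)$.
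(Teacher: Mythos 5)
Your strategy differs substantially from the paper's. The paper's proof is order-theoretic: it uses the already-established fact that $\Phi_2$ is an order-preserving bijection, together with the convexity of $\mathcal{T}_{\mathcal{T},\mathcal{F}}$ and $\mathcal{T}_{\mathcal{X},\mathcal{Y}}$ (Proposition \ref{convex}) and the Hasse-diagram description of tilting quivers (Theorem \ref{Hasse}), supplemented in one direction by the hereditary complement theorem of Happel--Unger. You instead try to transport exchange sequences directly through functors. For $\Phi_0$, your argument is correct and essentially trivial, as the paper also remarks.

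For $\Phi_2$ in the forward direction (from $\overrightarrow{\mathcal{T}_{\mathcal{T},\mathcal{F}}}$ to $\overrightarrow{\mathcal{T}_{\mathcal{X},\mathcal{Y}}}$) your trace-quotient construction is a genuinely different and interesting route, but it relies on some facts you leave implicit. The crucial exactness $Tr_XS = X\cap Tr_{\overline{M}}S$ is not automatic for a general torsion class; here it works because $S$ is simple, hence $\mathsf{Gen}\,S = \mathsf{add}\,S$ is closed under submodules. You should also check non-splitness of the induced sequence and indecomposability of the end terms; the former follows from basicness of $\Phi_2(T_1)$, the latter from the proof (not merely the statement) of Lemma \ref{basic}. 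With those filled in, this direction is fine, and arguably more explicit than the paper's appeal to \cite[Theorem 1.1]{HaUn4}.

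The converse direction, however, has a real gap. Lifting the exchange sequence $0\to A'\to\overline{N'}\to B'\to 0$ in $\mathcal{Y}(T_0)$ through $-\otimes_{B_0}T_0$ does produce a short exact sequence in $\mathcal{T}(T_0)$, but it relates $A$ and $B$, which are the indecomposable summands of $E_i/Tr_{E_i}S$, not of $E_i$ itself (where $T_i=\Phi_2^{-1}(T_i')=S\oplus E_i$). You need an exchange sequence whose end terms are the actual differing indecomposable summands of $T_1$ and $T_2$; passing from the quotient level $E_i/Tr_{E_i}S$ back to $E_i$ requires a further lift through the Bongartz-type extensions $(c_i)$ used in Proposition \ref{surjective}, and "$\Phi_2^{-1}$ produces the corresponding arrow" does not supply this. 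The paper sidesteps the problem entirely by switching to the poset picture: $T_2\leq T_1$ is immediate, any tilting module sandwiched between them must lie in $\mathcal{T}_{\mathcal{X},\mathcal{Y}}$ by (the proof of) Proposition \ref{convex}, and then injectivity of $\Phi_2$ together with the cover relation $T_2'\lessdot T_1'$ gives the contradiction. You would either need to carry out the second lift explicitly or replace this part of your argument with the poset argument.
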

\begin{proof}
The isomorphism between $\overrightarrow{\mathcal{T}_{\mathcal{T}}}$ and $\overrightarrow{\mathcal{T}_{\mathcal{Y}}}$ is obvious.\\
As in the proof of Proposition \ref{convex}, we will write $\overrightarrow{\mathcal{T}_S}$ and $\overrightarrow{\mathcal{T}_{S^{'}}}$ instead of $\overrightarrow{\mathcal{T}_{\mathcal{T},\mathcal{F}}}$ and $\overrightarrow{\mathcal{T}_{\mathcal{X},\mathcal{Y}}}$, respectively.
Suppose there exists an arrow $T^{'}_1\to T^{'}_2$ in $\overrightarrow{\mathcal{T}_{{S}^{'}}}$ with $T^{'}_1,T^{'}_2$ being basic tilting. Since $T^{'}_1,T^{'}_2\in\mathcal{T}_{S^{'}}$, by the definition of the tilting quiver and $\mathcal{T}_{S^{'}}$, we assume $T_1^{'}=S^{'}\oplus C^{'}\oplus C_1^{'}$ and $T_2^{'}=S^{'}\oplus C^{'}\oplus C_2^{'}$ with $C^{'}_1,C_2^{'}$ being indecomposable.  According to Proposition \ref{injection} and Proposition  \ref{surjective}, $\Phi_2$ is bijective, so we have  $T_1\coloneqq\Phi_2^{-1}(T^{'}_1), T_2\coloneqq\Phi_2^{-1}(T^{'}_2)$. Moreover, according to the proof of Proposition \ref{surjective} (exact sequences $(c_i)$, $(c)$ there), we may assume $T_1=S\oplus X\oplus X_1$ and $T_2=S\oplus X\oplus X_2$ with $X_1,X_2$ being indecomposable. Since $T^{'}_2\leq T^{'}_1$ and $\Phi_2$ is a bijection of partial orders, we have $T_{2}\leq T_{1}$ i.e., $\mathsf{(S\oplus X\oplus X_2)^{\perp}}\subset \mathsf{(S\oplus X\oplus X_1)^{\perp}}$. Consequently, $\Ext^{1}_\Lambda(X_1,X_2)=0$, and we claim that $\Ext^{1}_\Lambda(X_2,X_1)\ne 0$. Otherwise, the Remark before Proposition \ref{injection} implies  $\mathsf{add} T_1=\mathsf{add}(T_1\oplus X_2)=\mathsf{add} T_2$, which contradicts $\Phi_2$ being injective. Thus $\Ext^{1}_\Lambda(X_2,X_1)\ne 0$ and \cite[Theorem1.1]{HaUn4} implies there exists an arrow $T_1\to T_2$.

Suppose there exists an arrow $T_1\to T_2$ in  $\overrightarrow{\mathcal{T}_{S}}$, let $T_1^{'}=\Phi_2(T_1)$, $T_2^{'}=\Phi_2(T_2)$. By Theorem \ref{Hasse}, $T_2\leq T_1$, and  since $\Phi_2$ is a bijection of partially ordered sets, $T^{'}_2\leq T^{'}_1$. If there exists $T^{'}\in\mathcal{T}_{B_0}$ such that  $T^{'}_2\leq T^{'}\leq T^{'}_1$, then by the proof of Proposition \ref{convex}, $T^{'}\in\mathcal{T}_{S^{'}}$. Since $\Phi_2$ is a bijection, there exists $T\in\mathcal{T}_S$ such that $T^{'}=\Phi_2(T)$. Since $\Phi_2$ is a bijection of partially  ordered sets, $T_2\leq T\leq T_1$. According to Theorem \ref{Hasse}, this contradicts the minimality  of $T_1$. So by Theorem \ref{Hasse}, there exists an arrow $T^{'}_1\to T_2^{'}$.
\end{proof}

Let $B_0=\End_\Lambda(T_0)$ be a BB-tilted algebra. Since the torsion pair $(\cX(T_0),\cY(T_0))$ splits and $\mathsf{add}(S^{'})=\cX(T_0)$, $\cT_{B_0}$ is the disjoint union of $\mathcal{T}_{\mathcal{Y}}$ and ${\mathcal{T}_{\mathcal{X},\mathcal{Y}}}$. According to Theorem \ref{quiveriso}, we can obtain the structure of $\overrightarrow{\mathcal{K}_{B_0}}$ from that of $\overrightarrow{\mathcal{K}_\Lambda}$ except for the arrows between elements of $\mathcal{T}_{\mathcal{Y}}$ and $\mathcal{T}_{\mathcal{X},\mathcal{Y}}$.

\begin{PropositionS}\label{arrow}
Suppose $T_0$ is a BB-tilting $\Lambda$-module. Then
\begin{enumerate}
\item 
There are no arrows from $\cT_{\mathcal{X},\mathcal{Y}}$ to $\cT_{\cY}$ in $\overrightarrow{\cK_{B_0}}$.
\item 
There are no arrows from $\cT_{\cT}$ to $\cT_{\mathcal{T},\mathcal{F}}$ in $\overrightarrow{\cK_{\Lambda}}$.
\end{enumerate}
\end{PropositionS}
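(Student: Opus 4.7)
The plan is to prove both (a) and (b) by contradiction, exploiting the sharp structural feature of the BB-tilting assumption recorded in Lemma \ref{Simple}: namely, that $S'$ is the \emph{unique} indecomposable module in $\mathcal{X}(T_0)$ and $S$ is the unique indecomposable module in $\mathcal{F}(T_0)$. Once this uniqueness is in hand, the defining closure properties of the torsion pairs $(\mathcal{X}(T_0),\mathcal{Y}(T_0))$ and $(\mathcal{T}(T_0),\mathcal{F}(T_0))$ will rule out the two types of arrows in question.

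For part (a), I would suppose there is an arrow $T_1'\to T_2'$ in $\overrightarrow{\mathcal{K}_{B_0}}$ with $T_1'\in \mathcal{T}_{\mathcal{X},\mathcal{Y}}$ and $T_2'\in\mathcal{T}_{\mathcal{Y}}$, and write $T_1'=M'\oplus X'$, $T_2'=M'\oplus Y'$ together with a non-split short exact sequence $0\to X'\to \overline{M'}\to Y'\to 0$ with $\overline{M'}\in\mathsf{add}\,M'$. Since $T_2'\in\mathcal{T}_{\mathcal{Y}}$, the common summand $M'$, and hence $\overline{M'}$, lies entirely in $\mathcal{Y}(T_0)$. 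On the other hand $T_1'$ contains $S'$ as a summand, and $M'$ does not (as $M'\in\mathcal{Y}(T_0)$ and $S'\in\mathcal{X}(T_0)$), so we are forced to have $X'=S'$. But then the monomorphism $S'\hookrightarrow \overline{M'}$ is a nonzero morphism from an object of $\mathcal{X}(T_0)$ to an object of $\mathcal{Y}(T_0)$, which is impossible because $\Hom_{B_0}(\mathcal{X}(T_0),\mathcal{Y}(T_0))=0$ for a torsion pair.

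Part (b) proceeds by the dual mechanism on the $\Lambda$-side. Suppose there is an arrow $T_1\to T_2$ in $\overrightarrow{\mathcal{K}_\Lambda}$ with $T_1\in\mathcal{T}_{\mathcal{T}}$ and $T_2\in\mathcal{T}_{\mathcal{T},\mathcal{F}}$, and write $T_1=M\oplus X$, $T_2=M\oplus Y$ together with a non-split short exact sequence $0\to X\to \overline{M}\to Y\to 0$ with $\overline{M}\in\mathsf{add}\,M$. Since $T_2\in\mathcal{T}_{\mathcal{T},\mathcal{F}}$ the module $S$ is a direct summand of $T_2$, and by Lemma \ref{Simple} it is the unique indecomposable of $\mathcal{F}(T_0)$. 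As $M$ is a summand of $T_1\in\mathcal{T}_{\mathcal{T}}$, the module $S$ cannot appear in $M$, hence $Y=S$. Now $\overline{M}\in\mathsf{add}\,M\subset\mathcal{T}(T_0)$, and $\mathcal{T}(T_0)$ is a torsion class in $\text{mod }\Lambda$, so it is closed under factor modules. The epimorphism $\overline{M}\twoheadrightarrow S$ therefore forces $S\in\mathcal{T}(T_0)\cap\mathcal{F}(T_0)=0$, contradicting $S\ne 0$.

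The two arguments are structurally dual: part (a) blocks the prospective \emph{monomorphism} $S'\hookrightarrow \overline{M'}$ using that $\mathcal{Y}(T_0)$ is the torsion-free class of its torsion pair, while part (b) blocks the prospective \emph{epimorphism} $\overline{M}\twoheadrightarrow S$ using that $\mathcal{T}(T_0)$ is the torsion class of its torsion pair. I do not anticipate a serious obstacle here; the only delicate point is the bookkeeping that identifies the "new" summand in the tilting mutation with $S'$ (resp.\ $S$), and this is precisely what the uniqueness statement of Lemma \ref{Simple} provides.
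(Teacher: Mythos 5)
Your proof is correct on both counts, and part $(a)$ takes a genuinely different and more elementary route than the paper. For part $(b)$ you and the paper are doing essentially the same thing: you observe that $\mathcal{T}(T_0)$ is closed under factors so the epimorphism $\overline{M}\twoheadrightarrow S$ would place $S$ in $\mathcal{T}(T_0)\cap\mathcal{F}(T_0)=0$, while the paper states the equivalent $\Hom_\Lambda(\overline{M},S)=0$ directly from the torsion pair; these are the same contradiction. For part $(a)$, however, the paper splits into two cases and uses homological dimension: if $T_0$ is APR-tilting then $S'$ is injective so the sequence would split; if not, then ${\rm pd}_{B_0}S'=2$ by Lemma \ref{Simple}$(c)$, yet the sequence $0\to S'\to\overline{N'}\to Y'\to 0$ with $\overline{N'},Y'\in\mathcal{Y}(T_0)$ of projective dimension $\le 1$ forces ${\rm pd}_{B_0}S'\le 1$. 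Your argument sidesteps both cases and the projective-dimension computation entirely: since $S'\in\mathcal{X}(T_0)$ is the torsion class and $\overline{M'}\in\mathcal{Y}(T_0)$ the torsion-free class of the tilting torsion pair in $\text{mod }B_0$, one has $\Hom_{B_0}(S',\overline{M'})=0$, so the alleged monomorphism $S'\hookrightarrow\overline{M'}$ forces $S'=0$, a contradiction. This is shorter, avoids invoking \cite{APR} and Lemma \ref{Simple}$(c)$, and makes the duality with part $(b)$ transparent. The price is essentially nothing; the paper's route does have the side benefit of highlighting the ${\rm pd}_{B_0}S'=2$ feature that the author reuses elsewhere (e.g.\ in Lemma \ref{Simple}$(c)$ and the discussion of splitting), but that is a stylistic rather than logical advantage.
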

\begin{proof}
\begin{enumerate}[fullwidth,itemindent=2em]
\item 
Let $T_1^{'}\in\mathcal{T}_{\mathcal{Y}}$, $T_2^{'}\in\cT_{\mathcal{X},\mathcal{Y}}$ be two basic tilting $B_0$-modules. Suppose there exists an arrow from $T_2^{'}$ to $T_1^{'}$,
then we can assume $T_2^{'}=N^{'}\oplus S^{'}$, $T^{'}_1=N^{'}\oplus Y^{'}\in\mathcal{Y}(T_0)$. By the definition of the tilting quiver, there exists a non-split exact sequence $0\to S^{'}\to \overline{N^{'}}\to Y^{'}\to 0$ with $\overline{N^{'}}\in \mathsf{add} N^{'}$. If $T_0$ is APR-tilting, then according to \cite[Proposition 1.13]{APR}, $S^{'}$ is injective. Hence the short exact sequence splits, a contradiction. Hence $T_0$ is not APR-tilting, and Lemma \ref{Simple} implies ${\rm pd}_{B_0}S^{'}=2$. But since $Y^{'},\overline{N^{'}}\in\mathcal{Y}(T_0)$, \cite[1.7]{KB} yields ${\rm pd}_{B_0} Y^{'}, {\rm pd}_{B_0}\overline{N^{'}}\leq 1$. Then applying the functor $\Hom_{B_0}(-,Z^{'})$ to the exact sequence for any $B_0$-module $Z^{'}$, we have $\Ext^{2}_{B_0}(S^{'},Z^{'})=0$ for any $B_0$-module $Z^{'}$, which implies ${\rm pd}_{B_0}S^{'}\leq 1$, a contradiction. 
\item
Let $T_1\in\mathcal{T}_{\mathcal{T}}$, $T_2\in\cT_{\mathcal{T},\mathcal{F}}$ be two basic tilting $\Lambda$-modules and suppose there exists an arrow from $T_1$ to $T_2$, then we can assume $T_2=M\oplus S$, $T_1=M\oplus Y\in\mathcal{T}(T_0)$. If there exists an arrow from $T_1$ to $T_2$, then  there exists a non-split exact sequence $0\to Y\to \overline{M}\to S\to 0$ with $\overline{M}\in \mathsf{add} M$. But since $M\in\mathcal{T}(T_0)$, $S\in\mathcal{F}(T_0)$, we have $\Hom_\Lambda(M,S)=0$. Hence $\Hom_\Lambda(\overline{M},S)=0$, a contradiction.
\end{enumerate}
\end{proof}

Recall that a partial tilting $A$-module $M$ is called \textbf{almost complete tilting}, if $M$ admits $n-1$ non-isomorphic indecomposable summands. A basic $A$-module $Z$ is called a \textbf{complement} to $M$, if $M\oplus Z$ is tilting and $\mathsf{add} M\cap\mathsf{add} Z=0$.

\begin{PropositionS}\label{arrow2}
Suppose $T_0$ is a BB-tilting $\Lambda$-modules, $T_1^{'}\in\mathcal{T}_{\cY}$, $T_2^{'}\in\cT_{\cX,\cY}$ are two basic tilting modules, such that there is an arrow 
$T^{'}_1\to T^{'}_2$ in $\overrightarrow{\cK_{B_0}}$. Then
\begin{enumerate}[fullwidth,itemindent=2em]
\item 
Let $T_i=\Phi^{-1}(T_i^{'})$ for $i=1,2$. Then there exists $E_1,E_2,Y\in\cT(T_0)$ such that $T_1=E_1\oplus Y$, $T_2=E_2\oplus S$.
\item 
There exists an arrow $T_2\to T_1$ in $\overrightarrow {\cK_{\Lambda}}$ if and only if $\Ext^{1}_\Lambda(E_1,E_2)=0$.
\end{enumerate}
\end{PropositionS}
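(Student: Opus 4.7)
The plan is to prove (a) by tracing the arrow's structure through the torsion-theoretic framework, and (b) by transporting the $B_0$-exchange sequence witnessing $T_1'\to T_2'$ to a short exact sequence in $\mathsf{mod}\,\Lambda$. For (a): since $(\cX(T_0),\cY(T_0))$ splits in $\mathsf{mod}\,B_0$ and $\cX(T_0)=\add(S')$ by Lemma~\ref{Simple}(b), every element of $\mathcal{T}_{\mathcal{X},\mathcal{Y}}$ has $S'$ as an indecomposable summand. Proposition~\ref{arrow}(a) forbids arrows from $\mathcal{T}_{\mathcal{X},\mathcal{Y}}$ to $\mathcal{T}_{\cY}$, so the arrow $T_1'\to T_2'$ must replace some indecomposable $X'\in\cY(T_0)$ (a summand of $T_1'$) by $S'$: $T_1'=N'\oplus X'$, $T_2'=N'\oplus S'$. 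Then by Theorem~\ref{Th0}, $N'=\Hom_\Lambda(T_0,N)$ and $X'=\Hom_\Lambda(T_0,X)$ with $N,X\in\cT(T_0)$ and $X$ indecomposable, so $T_1=N\oplus X$. The construction in the proof of Proposition~\ref{surjective} yields $T_2=M\oplus S$ with $M\in\cT(T_0)$ fitting in the universal extension
\[
(c)\colon 0\to S^{r}\to M\to N\to 0,\qquad r=\dim_k\Ext^{1}_\Lambda(N,S),
\]
so I set $E_1:=N$, $Y:=X$, $E_2:=M$.

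For (b), I first transport the $B_0$-exchange sequence $0\to X'\to\overline{N'}\to S'\to 0$ with $\overline{N'}\in\add(N')$ over to $\Lambda$. Applying $-\otimes_{B_0}T_0$ and using $\Tor^{B_0}_{1}(S',T_0)\cong S$ (Theorem~\ref{Th0}) together with the vanishing of $\Tor^{B_0}_{1}(-,T_0)$ on $\cY(T_0)$ produces a non-split short exact sequence
\[
(\star)\qquad 0\to S\to X\to\overline{N}\to 0,\qquad \overline{N}\in\add(N),
\]
in $\mathsf{mod}\,\Lambda$. For the direction $(\Rightarrow)$, any arrow $T_2\to T_1$ in $\overrightarrow{\mathcal{K}_\Lambda}$ comes with a non-split exchange sequence $0\to S\to \overline{M}\to N_{*}\to 0$ (with $\overline{M}\in\add(M)$ and $N_{*}$ an indecomposable summand of $N$); the requirement that $T_1$ and $T_2$ share $n-1$ indecomposable summands, together with the decomposition $(c)=\bigoplus_{i}(c_{i})$, forces $M\cong X\oplus\bigoplus_{i\ne *}N_{i}$. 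Then $\Ext^{1}_\Lambda(N,M)=\Ext^{1}_\Lambda(N,X)\oplus\bigoplus_{i\ne *}\Ext^{1}_\Lambda(N,N_{i})=0$ since $T_1=N\oplus X$ is tilting. For $(\Leftarrow)$, the hypothesis $\Ext^{1}_\Lambda(N,M)=0$, combined with the automatic $\Ext^{1}_\Lambda(M,N)=0$ (from applying $\Hom_\Lambda(M,-)$ to $(c)$ and using $\Ext^{1}_\Lambda(M,M)=\Ext^{1}_\Lambda(M,S)=0$), makes $N\oplus M$ self-orthogonal; combined with the universal-extension structure of $(c)$ and the derived sequence $(\star)$, this should force $\overline{N}$ indecomposable, equal to some $N_{*}$, and $M\cong X\oplus\bigoplus_{i\ne *}N_{i}$, so that $(\star)$ itself serves as the exchange sequence witnessing the arrow $T_2\to T_1$.

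The main obstacle lies in the combinatorial step in the $(\Leftarrow)$ direction: extracting the decomposition $M\cong X\oplus\bigoplus_{i\ne *}N_{i}$ from Ext-vanishing requires a careful dimension analysis of the map $\Hom_\Lambda(N,N)\to\Ext^{1}_\Lambda(N,S^{r})$ induced by the class $[c]$, exploiting that each $N_{i}$ is exceptional (so $\End_\Lambda(N_{i})=k$) and then combining this with the constraint $\Hom_\Lambda(S,X)\ne 0$ (which, via Lemma~\ref{L4}, arises from the $B_0$-exchange sequence) to pin down the indecomposability of $\overline{N}$ and identify the unique exceptional summand $M_{*}$ with $X$.
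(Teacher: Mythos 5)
Your part (a) is correct and follows the same route as the paper, up to a change of names (your $N,X,M$ are the paper's $E_1,Y,E_2$): identify the common almost complete $B_0$-summand, transport $T_1'$ to $T_1=E_1\oplus Y$ via $-\otimes_{B_0}T_0$, and recover $T_2=E_2\oplus S$ through the universal extension $(c)$ from the proof of Proposition~\ref{surjective}.

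For $(\Rightarrow)$ your conclusion is right but the detour through "$M\cong X\oplus\bigoplus_{i\ne *}N_i$" is both unnecessary and slightly overreaching: if $\Ext^1_\Lambda(N,S)=0$ then $M\cong N$ and the excised summand of $T_1$ is $X$, not some $N_*$, so the decomposition you claim is not forced. What is forced, and suffices, is only that the common almost complete tilting module is $E_2$ (since $S\notin\add T_1$), so $E_2\in\add T_1$; then $\Ext^1_\Lambda(E_1,E_2)=0$ is immediate because $T_1=E_1\oplus Y$ is self-orthogonal. The paper argues exactly this way.

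The genuine gap is in $(\Leftarrow)$. Your plan is to show that the transported sequence $0\to S\to X\to \overline N\to 0$ is \emph{itself} the exchange sequence, which would require $\overline N$ to be indecomposable and $X$ to be a summand of $E_2$; you acknowledge you cannot close this, and it is in fact the wrong target: one does not need to exhibit the exchange sequence explicitly. The paper proceeds differently. After establishing $\Ext^1_\Lambda(E_2,E_1)=0$ (apply $\Hom_\Lambda(E_2,-)$ to $(c)$) and $\Ext^1_\Lambda(E_2,Y)=0$ (apply $\Hom_\Lambda(E_2,-)$ to $0\to S\to Y\to\overline{E_1}\to 0$), the hypothesis $\Ext^1_\Lambda(E_1,E_2)=0$ gives $\Ext^1_\Lambda(\overline{E_1},E_2)=0$ and hence $\Ext^1_\Lambda(Y,E_2)=0$ by applying $\Hom_\Lambda(-,E_2)$ to the same sequence. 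Thus $E_2\oplus E_1\oplus Y$ is self-orthogonal; since $E_1\oplus Y$ is tilting and hence perfect exceptional, the Remark before Proposition~\ref{injection} forces $E_2\in\add(E_1\oplus Y)$. Now $E_2$ is a common almost complete tilting summand of $T_1$ and $T_2$, so \cite[Theorem 1.1]{HaUn4} gives an arrow between them, and Proposition~\ref{arrow}(b) fixes its direction as $T_2\to T_1$. Your derivation stops at $\Ext^1_\Lambda(E_2,E_1)=\Ext^1_\Lambda(E_1,E_2)=0$; you are missing the two Ext-vanishings involving $Y$, the step $E_2\in\add T_1$, and the appeal to \cite{HaUn4} plus Proposition~\ref{arrow}(b).
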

\begin{proof}
\begin{enumerate}[fullwidth,itemindent=2em]
\item 
Since  $T_1^{'}\in\mathcal{T}_{\cY}$, $T_2^{'}\in\cT_{\cX,\cY}$ are two basic tilting modules, by the definition of $\cT_{\cY}$ and $\cT_{\cX,\cY}$ and the assumption on the existence of the arrow $T^{'}_1\to T^{'}_2$, we may assume $T_1^{'}=E_1^{'}\oplus Y^{'}$ and $T_2^{'}=S^{'}\oplus E_1^{'}$ such that $E_1^{'},Y^{'}\in\cY(T_0)$. Let $E_1\coloneqq E_1^{'}\otimes_{B_0}T_0$, $Y\coloneqq Y^{'}\otimes_{B_0}T_0$, then $T_1=E_1\oplus Y$ satisfies $\Phi(T_1)=T_1^{'}$.
In analogy with the construction of $(c)$ in the proof of Proposition \ref{surjective},
consider the exact sequence 
$$\xymatrix@1{0\ar[r]&S^{r}\ar[r]&E_2\ar[r]&E_1\ar[r]&0.&(\star)}$$
Where $E_1\coloneqq E_1^{'}\otimes_{B_0}T_0$ and $r\coloneqq\dim_k\Ext^{1}_\Lambda(E_1,S)$, such that the connecting hommorphism $\Hom_\Lambda(S^{r},S)\to \Ext^{1}_\Lambda(F,S)$ is surjective. Then the proof of Proposition \ref{surjective} shows $T_2=E_2\oplus S$ is as desired.
\item 
By the definition of the tilting quiver, we have a non-split short exact sequence $0\to Y^{'}\to \overline{E_1^{'}}\to S^{'}\to 0$ such that $\overline{E_1^{'}}\in \mathsf{add} E_1^{'}$. Applying the functor $-\otimes_{B_0}T_0$ to the short exact sequence, \cite[1.6]{KB} yields a short exact sequence 
$$\xymatrix@1{0\ar[r]& S\ar[r]& Y\ar[r]& \overline{E_1}\ar[r]& 0&(\star\star)}$$
such that $\overline{E_1}\in \mathsf{add} E_1$. 
Since $E_2\oplus S$ is tilting, $\Ext^{1}_\Lambda(E_2,E_2)=0$.
Applying the functor $\Hom_\Lambda(E_2,-)$ to $(\star)$, 
we have $\Ext^{1}_\Lambda(E_2,E_1)=0$(\ding{172}). By the same token, applying the functor $\Hom_\Lambda(E_2,-)$ to $(\star\star)$, we have $\Ext^{1}_\Lambda(E_2,Y)=0$(\ding{173}). 

If $\Ext^{1}_\Lambda(E_1,E_2)=0$(\ding{174}), $\Ext^{1}_\Lambda(\overline{E_1},E_2)=0$. Since $E_2\oplus S$ is tilting, $\Ext^{1}_\Lambda(S,E_2)=0$, so that applying the functor $\Hom_\Lambda(-,E_2)$ to $(\star\star)$, we have
$\Ext^{1}_\Lambda(Y,E_2)=0$(\ding{175}). Since $E_1\oplus Y$ is tilting, \ding{172},\ding{173},\ding{174},\ding{175} imply that $E_2\oplus E_1\oplus X$ is self-orthogonal. Then the Remark before Proposition \ref{injection} implies $E_2\in \mathsf{add}(E_1\oplus Y)$. This means $E_2$ is a common summand of the two basic tilting modules  $E_1\oplus Y$ and $E_2\oplus S$ . Therefore, \cite[Theorem 1.1]{HaUn4} provides an arrow between $T_1$ and $T_2$, and by Proposition \ref{arrow}, we know there exists an arrow from $T_2$ to $T_1$.

If there exists an arrow from $T_2$ to $T_1$, then by the definition of the tilting quiver, $T_1$, $T_2$ possess a common almost complete tilting module as summand. Since $S\not\in \mathsf{add}(E_1\oplus Y)$, this means $E_2$ is the common almost complete tilting module, hence $E_2\in \mathsf{add} (E_1\oplus Y)$. Since $E_1\oplus Y$ is tilting, hence is exceptional, we have $\Ext^{1}_\Lambda(E_1,E_2)=0$.
\end{enumerate}
\end{proof}

\begin{Remark} According to \cite[Corollary 1.2]{HaUn4}, when $A$ is hereditary, every almost complete tilting module $M$ admits at most two complements. But this is not true for algebras of global dimension not smaller than 2.
\end{Remark}

\begin{PropositionS}\label{complement}
Suppose that $T_0$ is a BB-tilting $\Lambda$-module, $B_0=\End_\Lambda(T_0)$. If $M^{'}\in\mathcal{Y}(T_0)$ is a basic almost complete tilting $B_0$-module, then $M^{'}$ admits at most three complements.
\end{PropositionS}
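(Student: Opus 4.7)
The plan is to split the set of complements of $M'$ using the splitting torsion pair $(\mathcal{X}(T_0),\mathcal{Y}(T_0))$ in $\mathrm{mod}\,B_0$ from \cite[1.7]{KB}. Any complement of $M'$ has exactly one indecomposable summand (since $M'\oplus Z'$ must have $n$ summands while $M'$ has $n-1$), and by splitting this summand lies either in $\mathcal{X}(T_0)$ or in $\mathcal{Y}(T_0)$, so I count each case separately.

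For the $\mathcal{X}(T_0)$-case, Lemma \ref{Simple}(b) identifies $S'=\Ext^{1}_\Lambda(T_0,S)$ as the unique indecomposable module in $\mathcal{X}(T_0)$, so at most one complement of this type exists.

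For the $\mathcal{Y}(T_0)$-case, I would set $M:=M'\otimes_{B_0}T_0\in\mathcal{T}(T_0)$ and first establish that $M$ is an almost complete classic tilting $\Lambda$-module. The equivalence $\mathcal{T}(T_0)\cong\mathcal{Y}(T_0)$ from Theorem \ref{Th0} shows that $M$ has $n-1$ non-isomorphic indecomposable summands; Lemma \ref{L1} gives $\Ext^{1}_\Lambda(M,M)\cong\Ext^{1}_{B_0}(M',M')=0$; and $\mathrm{pd}_\Lambda M\leq 1$ holds because $\Lambda$ is hereditary. Bongartz's Lemma then completes $M$ to a tilting $\Lambda$-module. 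For any indecomposable $\mathcal{Y}(T_0)$-complement $Z'$ of $M'$, the basic tilting module $M'\oplus Z'$ lies in $\mathcal{T}_{\mathcal{Y}}$, hence by the bijection $\Phi_0\colon\mathcal{T}_{\mathcal{T}}\to\mathcal{T}_{\mathcal{Y}}$ provided by Propositions \ref{injection} and \ref{surjective} (which on this subset is the functor $\Hom_\Lambda(T_0,-)$), it corresponds to a basic tilting $\Lambda$-module $M\oplus Z$ with $Z:=Z'\otimes_{B_0}T_0\in\mathcal{T}(T_0)$ indecomposable. So each $\mathcal{Y}(T_0)$-complement of $M'$ yields a distinct complement of $M$ in $\Lambda$.

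Finally, I invoke \cite[Corollary 1.2]{HaUn4}: over the hereditary algebra $\Lambda$ the almost complete tilting module $M$ has at most two complements, which bounds the number of $\mathcal{Y}(T_0)$-complements of $M'$ by two. Summing the two cases yields at most $1+2=3$ complements of $M'$. The only point needing care is the correspondence between $\mathcal{Y}(T_0)$-complements of $M'$ and those complements of $M$ in $\Lambda$ that happen to lie in $\mathcal{T}(T_0)$; injectivity of this assignment and preservation of the tilting property in both directions follow from the bijectivity of $\Phi_0$, so I do not anticipate a substantive obstacle.
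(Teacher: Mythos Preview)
Your proposal is correct and follows essentially the same approach as the paper: split complements via the splitting torsion pair $(\mathcal{X}(T_0),\mathcal{Y}(T_0))$, bound the $\mathcal{X}(T_0)$-side by one using the uniqueness of $S'$ (Lemma \ref{Simple}), and bound the $\mathcal{Y}(T_0)$-side by two by transferring to $\Lambda$ and invoking the hereditary bound \cite[Corollary 1.2]{HaUn4}. The only cosmetic difference is that the paper justifies the transfer directly via Theorem \ref{Th0}, Lemma \ref{L1}, and Proposition \ref{perfecttilting}, whereas you package it through the bijection $\Phi_0$; both arguments amount to the same thing.
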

\begin{proof}
Suppose $M^{'}\oplus Z^{'}$ is a basic tilting $B_0$-module. If $Z^{'}\in\mathcal{Y}(T_0)$,  let $M\oplus Z\in\mathcal{T}(T_0)$ such that $M^{'}\oplus Z^{'}= \Hom_\Lambda(T_0,M\oplus Z)$.  Then according to the Tilting Theorem \ref{Th0} and Lemma \ref{L1}, we know  that $M\oplus Z$ is perfect exceptional if and only if $M^{'}\oplus Z^{'}$ is perfect exceptional. 
Since ${\rm pd}_\Lambda(M\oplus Z)\leq 1$ and $M\oplus Z\in\mathcal{T}(T_0)$, \cite[1.7]{KB} implies ${\rm pd}_{B_0}(M^{'}\oplus Z^{'})\leq 1$. By Proposition \ref{perfecttilting}, perfect exceptional $B_0$ modules are tilting (and vice versa).  
Since $\Lambda$ is hereditary, as we mentioned above, $M$ admits at most two complements belong to $\mathcal{T}(T_0)$. Then according to the Tilting Theorem \ref{Th0},  $M^{'}$ admits at most two complements belong to $\mathcal{Y}(T_0)$. Since $(\mathcal{X}(T_0),\mathcal{Y}(T_0))$ is splitting in mod $B_0$ and $S^{'}$ is the unique indecomposable module in $\mathcal{X}(T_0)$, $M^{'}$ admits at most three complements.
\end{proof}

\begin{Remark} Let $T_0\in \text{mod }\Lambda$ be BB-tilting, $M^{'}\in\mathcal{Y}(T_0)$ be an almost complete tilting $B_0$-module, such that $M^{'}\oplus Y^{'}\in\mathcal{T}_{\mathcal{Y}}$, $M^{'}\oplus S^{'}\in\mathcal{T}_{\cX,\cY}$ are two basic tilting $B_0$-modules. According to \cite[Lemma 1.1]{HaUn1}, there either exists an arrow $M^{'}\oplus Y^{'}\to M^{'}\oplus S^{'}$ or there is a directed path $M^{'}\oplus Y^{'}\to M^{'}\oplus X^{'}\to M^{'}\oplus S^{'}$ in $\overrightarrow{\mathcal{K}_{B_0}}$.
\end{Remark}

\begin{PropositionS}\label{arrow3}
Suppose $T_0$ is a BB-tilting $\Lambda$-modules, $T_1\in\mathcal{T}_{\cT}$, $T_2\in\cT_{\cT,\cF}$ are two basic tilting modules, such that there is an arrow 
$T_2\to T_1$ in $\overrightarrow{\cK_{\Lambda}}$. Then
\begin{enumerate}[fullwidth,itemindent=2em]
\item 
Let $T_i^{'}=\Phi(T_i)$ for $i=1,2$. Then there exists $M^{'},N^{'},Y^{'}\in\cY(T_0)$ such that $T_1^{'}=M^{'}\oplus Y^{'}$, $T_2=N^{'}\oplus S^{'}$.
\item 
There exists an arrow $T_1^{'}\to T_2^{'}$ in $\overrightarrow {\cK_{B_0}}$ if and only if $\Ext^{1}_{\Lambda}(N,M)=0$, where $N,M\in\cT(T_0)$ such that $\Hom_\Lambda(T_0,N)=N^{'}$, $\Hom_\Lambda(T_0,M)=M^{'}$.
\end{enumerate}
\end{PropositionS}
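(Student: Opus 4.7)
Part (a) will follow by unpacking what an arrow in $\overrightarrow{\mathcal{K}_{\Lambda}}$ from $\mathcal{T}_{\mathcal{T},\mathcal{F}}$ to $\mathcal{T}_{\mathcal{T}}$ looks like. Since $T_1\in\mathcal{T}_{\mathcal{T}}$ has all of its indecomposable summands in $\mathcal{T}(T_0)$ while $T_2\in\mathcal{T}_{\mathcal{T},\mathcal{F}}$ must contain the unique indecomposable of $\mathcal{F}(T_0)$ (namely $S$, by Lemma \ref{Simple}(b)), the arrow $T_2\to T_1$ is forced to exchange $S$ for an indecomposable $Y\in\mathcal{T}(T_0)$: one has $T_2=M\oplus S$ and $T_1=M\oplus Y$ with $M\in\mathcal{T}(T_0)$, together with a non-split sequence $0\to S\to \overline{M}\to Y\to 0$, $\overline{M}\in\mathsf{add}(M)$. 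Applying $\Phi$ directly from its definition yields $T_1'=M'\oplus Y'$ with $M'=\Hom_\Lambda(T_0,M)$, $Y'=\Hom_\Lambda(T_0,Y)$, and $T_2'=\Hom_\Lambda(T_0,M/Tr_{M}S)\oplus\Ext^{1}_\Lambda(T_0,S)=N'\oplus S'$, where $N=M/Tr_{M}S$; all three of $M'$, $N'$, $Y'$ belong to $\mathcal{Y}(T_0)$ since $M$, $N$, $Y$ lie in $\mathcal{T}(T_0)$.

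For the "only if" direction of (b), I would argue as follows. Any arrow $T_1'\to T_2'$ forces $T_1'$ and $T_2'$ to share an almost complete tilting summand; since $S'\notin\mathsf{add}(T_1')$ (because $T_1'\in\mathcal{T}_{\mathcal{Y}}$), this shared summand must be $N'$, so $T_1'=N'\oplus V'$ for some indecomposable $V'$. Because $N'$ is then a direct summand of the tilting module $T_1'$, one has $\Ext^{1}_{B_0}(N',M')=0$, and Lemma \ref{L1} translates this to $\Ext^{1}_\Lambda(N,M)=0$.

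The "if" direction is the main content. Assuming $\Ext^{1}_\Lambda(N,M)=0$, the plan is to produce the decomposition $T_1'=N'\oplus V'$ via Remark \ref{ad} after verifying that $N'$ is exceptional and two-sided Ext-orthogonal to $T_1'$; then \cite[Theorem 1.1]{HaUn4} supplies an arrow between $T_1'$ and $T_2'=N'\oplus S'$, and Proposition \ref{arrow}(a) forces its direction to be $T_1'\to T_2'$. The module $N'$ is self-orthogonal by Lemma \ref{selforthogonal}, and \cite[1.7]{KB} gives $\mathrm{pd}_{B_0}(N')\leq 1$. The two-sided Ext-orthogonality reduces, via Lemma \ref{L1}, to showing $\Ext^{1}_\Lambda(M\oplus Y,N)=\Ext^{1}_\Lambda(N,M\oplus Y)=0$. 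For this I would exploit two short exact sequences: the trace sequence $0\to Tr_{M}S\to M\to N\to 0$ (where $Tr_{M}S\cong S^{s}$, since $S$ is simple and $\mathcal{F}(T_0)=\mathsf{add}(S)$), and the $\Lambda$-arrow sequence $0\to S\to\overline{M}\to Y\to 0$ from part (a). Applying $\Hom_\Lambda(M,-)$ and $\Hom_\Lambda(Y,-)$ to the trace sequence, together with the tilting identities $\Ext^{1}_\Lambda(M,M)=\Ext^{1}_\Lambda(Y,M)=0$ and the hereditary hypothesis on $\Lambda$, yields $\Ext^{1}_\Lambda(M\oplus Y,N)=0$; applying $\Hom_\Lambda(N,-)$ to the arrow sequence and using the hypothesis $\Ext^{1}_\Lambda(N,M)=0$ yields $\Ext^{1}_\Lambda(N,Y)=0$. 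The main obstacle will be the bookkeeping in these Ext computations, particularly the one for $\Ext^{1}_\Lambda(N,Y)$, which genuinely requires the structure of the $\Lambda$-arrow sequence rather than the trace sequence alone.
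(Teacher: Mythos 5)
Your part (a) and the ``only if'' direction of (b) match the paper's argument exactly, and your plan for the Ext-computations in the ``if'' direction (getting $\Ext^1_\Lambda(M\oplus Y,N)=0$ from the trace sequence and $\Ext^1_\Lambda(N,Y)=0$ from the $\Lambda$-arrow sequence via the hypothesis $\Ext^1_\Lambda(N,M)=0$) is correct and is what the paper does. The gap is in the last step of the ``if'' direction: having established $N'\in\mathsf{add}(T_1')$ so that $T_1'=N'\oplus V'$ and $T_2'=N'\oplus S'$ share the almost complete tilting summand $N'$, you invoke \cite[Theorem 1.1]{HaUn4} to produce an arrow between them. That theorem is a statement about hereditary algebras, but $B_0$ is hereditary only when $T_0$ is APR-tilting; by Lemma~\ref{Simple}(c), in general $\mathrm{gl.dim}\,B_0=2$, and by Proposition~\ref{complement} the almost complete tilting module $N'$ can admit up to \emph{three} complements over $B_0$. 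So sharing $N'$ does not directly give an arrow: the Hasse-diagram relation between $T_1'$ and $T_2'$ could a priori be a directed path of length two through a third complement $Z'$, as recorded in the Remark after Proposition~\ref{complement} (which rests on \cite[Lemma 1.1]{HaUn1}, not \cite[Theorem 1.1]{HaUn4}).

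The paper closes this gap with a further argument you have not supplied. It assumes there is no arrow $T_1'\to T_2'$, so there is a path $T_1'\to T'\to T_2'$ with $T'=N'\oplus Z'$, giving non-split sequences $0\to X'\to \overline{N'}_1\to Z'\to 0$ and $0\to Z'\to\overline{N'}_2\to S'\to 0$ with $\overline{N'}_i\in\mathsf{add}\,N'$ (here $X'\in\mathsf{add}\,M'$ is the complement of $N'$ in $T_1'$, which requires the intermediate observation from the Remark~\ref{ad} analysis that $Y'$ is a summand of $N'$, i.e.\ $T_1'=N'\oplus X'$ with $X'\in\mathsf{add}\,M'$, and the case $N'\cong M'$ is ruled out because it would force $\Hom_\Lambda(S,M)=0$, contradicting the existence of the monomorphism $S\to\overline{M}$). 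Then $\Ext^1_\Lambda(S,M)=0$ (since $M\oplus S$ is tilting) and Lemma~\ref{L4} give $\Ext^2_{B_0}(S',M')=0$, hence $\Ext^2_{B_0}(S',X')=0$; applying $\Hom_{B_0}(S',-)$ to the first sequence and using $\Ext^1_{B_0}(S',N')=0$ then yields $\Ext^1_{B_0}(S',Z')=0$, which forces the second sequence to split, a contradiction. Incidentally, you also misidentify where the difficulty lies: the Ext-bookkeeping you flag as the main obstacle is routine; the genuine content is this exclusion of a third complement, which is intrinsic to $B_0$ having global dimension 2.
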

\begin{proof}
\begin{enumerate}[fullwidth,itemindent=2em]
\item 
Since $T_1\in\mathcal{T}_{\cT}$, $T_2\in\cT_{\cT,\cF}$ are two basic tilting modules, and there exists an arrow $T_2\to T_1$ in $\overrightarrow{\cK_{\Lambda}}$, we assume 
$T_1=M\oplus Y$, $T_2=M\oplus S$ such that $M,Y\in\cT(T_0)$. Let $M^{'}\coloneqq \Hom_\Lambda(T_0,M)$, $Y\coloneqq \Hom_\Lambda(T_0,Y)$, then $T_1^{'}=M^{'}\oplus Y^{'}$ satisfies $T^{'}_1=\Phi(T_1)$. By the definition of $\Phi$, $T_2^{'}=\Phi(T_2)=\Hom_\Lambda(T_0,N)\oplus S^{'}$. Where $N$ is in the following exact sequence
$$\xymatrix@1{0\ar[r]&S^{r}\ar[r]&M\ar[r]&N\ar[r]&0&(\star)}$$
such that $\dim_k\Hom_\Lambda(S,M)=r$ and $\Hom_\Lambda(S,S^{r})\to \Hom_\Lambda(S,M)$ is surjective. Let $N^{'}\coloneqq \Hom_\Lambda(T_0,N)$, our statements follow.
\item
The existence of the arrow $T_2\to T_1$ yields a non-split short exact sequence  
$$\xymatrix{0\ar[r] &S\ar[r] &\overline{M}\ar[r]& Y\ar[r]& 0&(\star\star)}$$
with $\overline{M}\in \mathsf{add} M$. 
Since $M\oplus Y$ is tilting, $\Ext^{1}_\Lambda(M,M)=\Ext^{1}_\Lambda(Y,M)=0$. Applying the functor $\Hom_\Lambda(M,-)$ to $(\star)$, we have $\Ext^{1}_\Lambda(M,N)=0$. Applying the functor $\Hom_\Lambda(Y,-)$ to $(\star)$, we have $\Ext^{1}_\Lambda(Y,N)=0$. Then according to Lemma \ref{L1}, we have $\Ext^{1}_{B_0}(M^{'},N^{'})=\Ext^{1}_{B_0}(Y^{'},N^{'})$ $=0$(\ding{182}).

If $\Ext^{1}_\Lambda(N,M)=0$, then $\Ext^{1}_\Lambda(N,\overline{M})=0$. Applying the functor $\Hom_\Lambda(N,-)$ to $(\star\star)$, we have $\Ext^{1}_\Lambda(N,Y)=0$. According to Lemma \ref{L1}, then we have $\Ext^{1}_{B_0}(N^{'},M^{'})=\Ext^{1}_{B_0}(N^{'},Y^{'})=0$(\ding{183}). Since $M^{'}\oplus Y^{'}$ is tilting, $N^{'}$ is partial tilting and ${\rm pd}_{B_0}N^{'},{\rm pd}_{B_0}(M^{'}\oplus Y^{'})\leq 1$, \ding{182}, \ding{183} imply that $N^{'}\oplus M^{'}\oplus Y^{'}$ is self-orthogonal. Then the Remark before Proposition \ref{injection} implies $N^{'}\in \mathsf{add}(M^{'}\oplus Y^{'})$. But by our assumptions, $N^{'}$ and $M^{'}\oplus Y^{'}$ are basic modules with $n-1$ and $n$ indecomposable summands, respectively. So $N^{'}$ is a summand of $M^{'}\oplus Y^{'}$ and either $N^{'}\cong M^{'}$ or $Y^{'}$ is a summand of $N^{'}$. 
If $N^{'}\cong M^{'}$, then $N\cong M$ and $(\star)$ implies $\Hom_\Lambda(S,M)=0$, a contradiction. So $Y^{'}$ is a summand of $N^{'}$, and we can write $T_1^{'}=M^{'}\oplus Y^{'}=N^{'}\oplus X^{'}$ with $X^{'}\in \mathsf{add} M^{'}$(\ding{184}).

If there exists no arrow from $T_1^{'}$ to $T_2^{'}$, then according to the Remark after Proposition \ref{complement},  we can find a basic tilting $B_0$-module $T^{'}=N^{'}\oplus Z^{'}$ such that $T_1^{'}\to T^{'}\to T^{'}_2$ is a full subquiver of $\overrightarrow{\mathcal{K}_{B_0}}$. According to the definition of the tilting quiver, there exist two non-split short exact sequences $$\xymatrix@1{0\ar[r] &X^{'}\ar[r]& \overline{N^{'}}_1\ar[r]& Z^{'}\ar[r]& 0&(\star\star\star)}$$ 
$$\xymatrix@1{0\ar[r] &Z^{'}\ar[r] &\overline{N^{'}}_2\ar[r] &S^{'}\ar[r] &0&(\star\star\star\star)}$$ such that $\overline{N^{'}}_1,\overline{N^{'}}_2\in \mathsf{add} N^{'}$. Since $M\oplus S$ is tilting, $\Ext^{1}_\Lambda(S,M)=0$, and Lemma \ref{L4} implies $\Ext^{2}_{B_0}(S^{'},M^{'})=0$. Then \ding{184} implies $\Ext^{2}_{B_0}(S^{'},X^{'})=0$. Since $S^{'}\oplus N^{'}$ is tilting, $\Ext^{1}_{B_0}(S^{'},N^{'})=0$. Applying the functor $\Hom_{B_0}(S^{'},-)$ to $(\star\star\star)$, we have $\Ext^{1}_{B_0}(S^{'},Z^{'})=0$ which contradicts $(\star\star\star\star)$ being non-split. So there exists an arrow from $T_1^{'}$ to $T_2^{'}$.

Suppose there exists an arrow from $T_1^{'}$ to $T_2^{'}$. Since $T_1^{'}\in\mathcal{T}_{\mathcal{Y }}$, $N^{'}$ is the common almost complete tilting module of $T_1^{'}=N^{'}\oplus S^{'}$ and $T_1^{'}=M^{'}\oplus Y^{'}$. Since $T_1^{'}$ is self-orthogonal, $\Ext^{1}_{B_0}(N^{'},M^{'})=0$. Then by Lemma \ref{L1} we have $\Ext^{1}_\Lambda(N,M)=0$, so that our conclusion follows.
\end{enumerate}
\end{proof}

\section {examples}
\begin{example}
Let $\Lambda=kQ$ be the path algebra of the quiver $\xymatrix{1\ar[r]&2\ar[r]&3\ar[r]&4}$. Since it is of finite representation type, according to \cite[Corollary 2.2]{HaUn2},  $\overrightarrow{\mathcal{K}_\Lambda}$ is connected and finite. It's easy to see that $\Lambda_\Lambda$ is the unique source point and $D(_\Lambda\Lambda)$ is the unique sink point in $\overrightarrow{\mathcal{K}_\Lambda}$. Start with taking almost complete tilting modules from $\Lambda_\Lambda$ then consider its complements, and by the same token until reaching $D(_\Lambda\Lambda)$, we see that the tilting quiver $\overrightarrow{\mathcal{K}_\Lambda}$ is
$$
\xymatrix{&{T_1}\ar@/^/[rdddd]\ar[r]&{T_4}\ar[r]\ar[d]&{T_9}\ar[d]\ar@/^/[dddd]\\
&&{T_5}\ar[r]&{T_{10}}\ar[d]\\
{T_{00}}\ar[uur]\ar[ddr]\ar[r]&{T_2}\ar[ur]\ar[r]&{T_6}\ar[d]\ar[r]&{T_{11}}\ar[d]\\
&&{T_7}\ar[r]&{T_{12}}\\
&{T_3}\ar[r]\ar[ur]&{T_8}\ar[r]&{T_{13}}.\ar[u]&
}
$$

If we take $T_0=P[2]\oplus \tau^{-1}S(2)$ which is BB-tilting but not APR-tilting, then  $\mathcal{T}_{\mathcal{T}}=\{T_3,T_7,T_8,$ $T_{12},T_{13}\}$ and $\mathcal{T}_{S}=\{T_6,T_{11}\}$ are marked as below by ${\bullet}$ and $\star$, respectively, and the arrows between them are marked by $\dashrightarrow$:
$$
\xymatrix{&{\circ}\ar@/^/[rdddd]\ar[r]&{\circ}\ar[r]\ar[d]&{\circ}\ar[d]\ar@/^/[dddd]\\
&&{\circ}\ar[r]&{\circ}\ar[d]\\
{\circ}\ar[uur]\ar[ddr]\ar[r]&{\circ}\ar[ur]\ar[r]&{\star}\ar@{.>}[d]\ar[r]&{\star}\ar@{.>}[d]\\
&&{\bullet}\ar[r]&{\bullet}\\
&{\bullet}\ar[r]\ar[ur]&{\bullet}\ar[r]&{\bullet}.\ar[u]&
}
$$

According to Theorem \ref{quiveriso}, we know $\overrightarrow{\mathcal{K}_{B_0}}$ is the union of 

\begin{minipage}[t]{0.3\textwidth}

$$\xymatrix{
                 & {T_7^{'}}\ar[r] & {T_{12}^{'}}\\
     {T_3^{'}}\ar[r]\ar[ru] & {T_8^{'}}\ar[r] & {T^{'}_{13}}\ar[u]
}$$
\end{minipage}
\begin{minipage}[t]{0.3\textwidth}
\begin{center}
\vspace{30pt}
and 
\end{center}
\end{minipage}
\begin{minipage}[t]{0.3\textwidth}
\begin{center}
\vspace{15pt}
$$\xymatrix{&  {T^{'}_6}\ar[r]  &  {T_{11}^{'}}\\           
}$$
\end{center}
\end{minipage}\\
and some more arrows between them.

Since there exist arrows from $T_6$ to $T_7$ and $T_{11}$ to $T_{12}$ in $\overrightarrow{\mathcal{K}_\Lambda}$, by verifying the condition in Proposition \ref{arrow3} (b), we know there exist arrows from $T_7^{'}$ to $T_6^{'}$ and $T_{12}^{'}$ to $T_{11}^{'}$ in $\overrightarrow{\mathcal{K}_{B_0}}$. By the Remark after Proposition \ref{complement}, we know $T^{'}_6$ $T^{'}_{11}$ have only one predecessor in $\mathcal{T}_{\mathcal{Y}}$,  so the tilting quiver of $B_0$ is :
$$\xymatrix{&  {T^{'}_6}\ar[r]  &  {T_{11}^{'}}\\
                  & {T_7^{'}}\ar[r]\ar[u] & {T_{12}^{'}}\ar[u]\\
     {T_3^{'}}\ar[r]\ar[ru] & {T_8^{'}}\ar[r] & {T^{'}_{13}}.\ar[u]
}$$

By computations, $B_0=\End_\Lambda(T_0)$ is given by the path algebra of the following  quiver bounded by $\alpha\beta=0$,
$$\xymatrix{{\circ}\ar[r]^{\alpha}&{\circ}\ar[r]^{\beta}&{\circ}\ar[r]^{\gamma}&{\circ}.}$$ 

If we take $T_0=P(1)\oplus P(4)\oplus I(1)\oplus I(2)$ which is not BB-tilting, then
 $\mathcal{T}_{\mathcal{T}}=\{T_7,T_{12}\}$ $\mathcal{T}_{\mathcal{T},\mathcal{F}}=\{T_6,T_9,T_{10},$ $T_{11},T_{13}\}$ and we can check that it is also admissible. By computations we know $B_0=\End_\Lambda(T_0)$ is given by the path algebra of the quiver  $$\xymatrix{{\circ}\ar[r]^{\alpha}&{\circ}\ar[r]^{\beta}&{\circ}\ar[r]^{\gamma}&{\circ}}$$ bounded by $\beta\gamma=0$,
and the tilting quiver $\overrightarrow{\mathcal{K}}_{B_0}$ is as follows:
$$\xymatrix{
\circ\ar[r]\ar@/^/@{.>}[rrrr]&\circ\ar[r]&\circ\ar[r]&\circ&\circ\ar[l]\\
                  &                  & \circ\ar[r]\ar[llu] &\circ.\ar[ur]   &&
}$$
For convenience, we will write it in this way:
$$\xymatrix{
{T^{'}_{12}}\ar[r]\ar@/^/@{.>}[rrrr]&{T^{'}_{13}}\ar[r]&{T^{'}_{9}}\ar[r]&{T^{'}_{10}}&{T^{'}_{11}}\ar[l]\\
                  &                  & {T^{'}_7}\ar[r]\ar[llu] &{T^{'}_6}\ar[ur]   &
}$$
such that $T^{'}_i=\Phi(T_i)$, for $i={6,7,9,10,11,12,13}$.
However, the shapes of $\overrightarrow{\mathcal{T}_{\mathcal{T}}}$ and $\overrightarrow{\mathcal{T}_{\mathcal{T},\mathcal{F}}}$ are as follows:

\begin{minipage}[t]{0.3\textwidth}
$$\xymatrix{T_9\ar@/^/@{.>}[rrrr]\ar[r]&T_{10}\ar[r]&T_{11}&&T_{13}\\
&&T_6\ar[u]
}$$
\end{minipage}
\begin{minipage}[t]{0.3\textwidth}
\begin{center}
\vspace{30pt}
and 
\end{center}
\end{minipage}
\begin{minipage}[t]{0.3\textwidth}
\begin{center}
\vspace{15pt}
$$\xymatrix{&  {T_7}\ar[r]  &  {T_{12}}\\           
}$$
\end{center}
\end{minipage}\\
So Theorem \ref{quiveriso} doesn't work any more and we cannot use the same way as in the first case to construct the tilting quiver of $B_0$
\end{example}

\begin{bibdiv}
\begin{biblist}
\addcontentsline{toc}{chapter}{\textbf{Bibliography}}

\bib{AF}{book}{
title={Rings and categories of modules}
author={F.W.Anderson}
author={K.R.Fuller}
edition={Second ed.}
series={Graduate Texts in Mathematics}
volume={13}
publisher={Springer-Verlag},
date={1992}
address={New York}
}

\bib{ASS}{book}{
title={Elements of the Representation Theory of Associative Algebras, I},
subtitle={Techniques of Representation Theory},
author={I. Assem},
author={D. Simson},
author={A. Skowro\'nski},
publisher={Cambridge University Press},
date={2006},
address={Cambridge},
}

\bib{APR}{article}{
title={Coxeter functors without diagrams}
author={M.Auslander}
author={M.I.Platzeck}
author={I.Reiten}
journal={Trans.Amer.Math.Soc},
volume={250}
pages={1-46}
date={1979}
} 

\bib{KB}{collection.article}{
    title={Tilted Algebras},
author={K.Bongartz},
series= {Lecture Notes in Mathematics 903}
date={1982 },
pages={26-38}
}

\bib{BB}{collection.article}{
    title = {Generalizations of the Berstein-Gel'fand-Ponomarev reflection functors},
    author = {S.Brenner},
    author={M.C.R.Bulter},
    series={Lecture Notes in Math.}
    volume={832}
    publisher={Spriger}
    address={Berlin}
    pages={103-169}
    date={1980}
  }

\bib{CHU}{article}{
title={Complements to Partial Tilting Modules}
author={F.Coelho}
author={D.Happel}
author={L.Unger}
journal={J.Algebra},
volume={170}
pages={184-205}
date={1994}
}  
 
\bib{GL}{article}{
title={Perpendicular categories with applications to representations and sheaves}
author={W.Geigle}
author={H.Lenzing}
journal={J.Algebra},
volume={144}
pages={273-343}
date={1991}
}  
 
\bib{Ha1}{book}{
title={Triangulated Categories in the Representation Theory of Finite Dimensional Algebras}
author={D.Happel}
series={London Mathematical Society Lecture Note Series}
volume={119}
publisher={Cambridge University Press}
date={1988}
address={Cambridge}
}

\bib{HHK}{book}{
title={Handbook of Tilting Theory}
author={L.Angeleri.H\"{u}gel}
author={D.Happel}
author={H.Krause}
series={London Mathematical Society Lecture Note Series}
volume={332}
publisher={Cambridge University Press}
date={2007}
address={Cambridge}
}

\bib{Ha2}{book}{
title={Selforthogonal modules},
subtitle={Abelian Groups and Modules},
series={Mathematics and its Applications}
volume={343}
author={D.Happel},
pages={257--276},
year={1995},
publisher={Kluwer Academic Publishers},
address={Dordrecht}
}

\bib{HR}{article}{
title={Tilted algebras}
author={D.Happel}
author={C.M.Ringel}
journal={Trans.Amer.Math.Soc},
volume={247}
pages={399-443}
date={1982}
} 

\bib{HRS}{article}{
title={Piecewise hereditary algebras}
author={D.Happel}
author={J.C.Rickard}
author={A.H.Shofield}
journal={Bull.London.math.Soc},
volume={20}
pages={23-28}
date={1988}
}

\bib{HaUn4}{article}{
title={Almost complete tilting modules}
author={D.Happel}
author={L.Unger}
journal={Proc.Amer.Math.Soc}
volume={107}
pages={603-610}
date={1989}
}

\bib{HaUn2}{article}{
title={On a Partial Order of Tilting Modules}
author={D.Happel}
author={L.Unger}
journal={Algebr. Represent}
volume={8}
pages={147-156}
date={2005}
}

\bib{HaUn3}{article}{
title={On the quiver of tilting modules}
author={D.Happel}
author={L.Unger}
journal={J.Algebra}
volume={284}
pages={857-868}
date={2005}
}

\bib{HaUn1}{article}{
title={Reconstruction of path algebras from their posets of tilting modules}
author={D.Happel}
author={L.Unger}
journal={Trans. Amer. Math. Soc}
volume={361}
pages={3633-3660}
date={2009}
}

\bib{SL}{article}{
title={ Universal derived equivalences of posets of  tilting modules}
author={S.Ladkani}
journal={arXiv:0710.2860},
date={2007}
}

\bib{M}{article}{
title={Tilting modules of finite projective dimension}
author={T.Miyashita}
journal={Math. Z.}
volume={193}
pages={113-146}
date={1986}
}

\bib{HWP}{article}{
title={Tilting modules for tilted algebras and related topics}
author={H.Peng}
journal={Dissertation, University of Kiel in preparation}
} 

\bib{RS}{article}{
title={Cocovers and tilting modules}
author={J.Rickard}
author={A.Schofield}
journal={Math.Proc.Cambridge Philos.Soc},
volume={106}
pages={1-5}
date={1989}
} 

\bib{RiS}{article}{
title={On a simplicial complex associated with tilting modules}
author={C.Riedtmann}
author={A.Schofield}
journal={Comment. Math.Helv},
volume={66}
pages={70-78}
date={1991}
} 

\end{biblist}
\end{bibdiv}

\Addresses

\end{document}